\def\BB{\mathcal{B}}
\def\qedbox{\hbox{$\rlap{$\sqcap$}\sqcup$}}
\newtheorem{theorem}{Theorem}[section]
\newtheorem{lemma}{Lemma}[section]
\newtheorem{remark}{Remark}[section]
\newtheorem{constraint}{Constraint}[section]
\newtheorem{example}{Example}[section]
\newcommand{\R}{\mathbb{R}}
\begin{document}
\title{Growth of heat trace and heat content asymptotic coefficients}
\author{M. van den Berg, Peter  Gilkey, and K. Kirsten}
\address{MvdB: School of Mathematics, University of Bristol,
University Walk, Bristol BS8 1TW, UK\smallbreak E-mail:
M.vandenBerg@bris.ac.uk}
\address{PG: Mathematics Department, University of Oregon,
  Eugene OR 97403\smallbreak
  E-mail: gilkey@uoregon.edu}
\address{KK: Department of Mathematics, Baylor University\\
Waco, Texas, TX 76798, USA\smallbreak
  E-mail: Klaus\_Kirsten@baylor.edu}
\begin{abstract}{We show in the smooth category that the heat trace asymptotics
and the heat content asymptotics can be made to grow arbitrarily
rapidly. In the real analytic context, however, this is not true
and we establish universal bounds on their growth.
\\MSC 2002: 35K20,35P99,58J25,58J50}\end{abstract}
\maketitle

\section{Introduction}\label{sect-1}

\subsection{Heat trace asymptotics}
Let $(M,g)$ be a compact Riemannian manifold of dimension $m$ with
smooth (possibly empty) boundary $\partial M$. Let
$\operatorname{dvol}_m$ and $\operatorname{dvol}_{m-1}$ be the
Riemannian volume elements on $M$ and on $\partial M$,
respectively. Let $\Delta_g$ be the scalar Laplacian. Let $\nu$ be the
inward unit normal on the boundary; we extend $\nu$ by parallel
translation to a vector field defined on a collared neighborhood
of the boundary so $\nabla_\nu\nu=0$; this means that the integral curves of
$\nu$ are unit speed geodesics perpendicular to $\partial M$. Let
$$\BB^-\phi:=\phi|_{\partial M}\quad\text{and}\quad\BB^+\phi:=\nu\phi|_{\partial M}$$
be the Dirichlet and Neumann boundary operators, respectively.  Impose boundary conditions $\BB=\BB^-$ or $\BB=\BB^+$.
Let $u:M\times(0,\infty)\rightarrow \R$ be the unique solution of
$$
\begin{array}{ll}
(\partial_t+\Delta_g)u(x,t)=0&\text{(heat equation)},\\
\lim_{t\rightarrow0}u(\cdot,t)=\phi_1(\cdot)\text{ in }L^2&\text{(initial condition)},\vphantom{\vrule height 11pt}\\
\BB u(\cdot,t)=0\text{ for }t>0&\text{(boundary condition)},\vphantom{\vrule height 11pt}
\end{array}
$$
where $\phi_1$ is real-valued and smooth on $M$. Then $u(x,t)$
represents the temperature at $x\in M$ at time $t>0$ if $M$ has
initial temperature distribution $\phi_1$ where the boundary condition $\BB$ is imposed on $u$ for $t>0$. The solution is
formally given by $${u(x,t):=e^{-t\Delta_{g,\BB}}\phi_1(x),}$$ 
where $\Delta_{g,\BB}$ is the associated realization of the Laplacian. The operator
$e^{-t\Delta_{g,\BB}}$ is a smoothing operator of trace class and, as
$t\downarrow0$, there is a complete asymptotic series of the form
\cite{Gri66,Gri71,Se66,Se68b,Se69a,Se69,Se69b}
$$
\operatorname{Tr}_{L^2}\{e^{-t\Delta_{g,\BB}}\}\sim(4\pi t)^{-m/2}\sum_{n=0}^\infty a_n(M,g,\BB)t^{n/2}\,.
$$
If $M$ is a closed manifold, the boundary condition plays no role
and we shall denote these coefficients by $a_n(M,g)$. They vanish
if $n$ is odd in this instance.

 The asymptotic coefficients $\{a_1,a_2,\cdots\}$ are locally computable invariants of $M$ and of
$\partial M$ as we shall see presently in Section~\ref{sect-2}. In mathematical physics, they occur for example in
the calculation of Casimir forces \cite{x2,BKMM09,x1} or in the study of
the partition function of quantum mechanical systems
\cite{x3,x4,x1}. They are known in the category of manifolds with
boundary for $n\le 5$ \cite{BGK99,Ki98}, and in the category of
closed manifolds for $n\le8$ \cite{AmBeOc89,Av91}. These coefficients
play a crucial role in the study of isospectral questions. Related
invariants for more general operators of Laplace type also play a
crucial role in the local index theorem. See, for example, the
discussion and references in \cite{A89, A90, BGO90, BPY89, CY89,
G88, G94, PG, McSi67, MinPle49, OPS88}. They have also been
studied with nonlocal boundary conditions \cite{K08}. We also
refer to \cite{GW10} where the heat trace itself is studied and
not just the asymptotic coefficients. For the study of the
asymptotic behaviour of the eigenvalues of $\Delta_{g,\BB}$ we
refer to \cite{SafVas97} and the references therein. The field is
vast and it is only possible to cite a few references.

\subsection{Planar domains}\label{sect-1.2} In the case of a planar domain $\Omega$, the heat trace asymptotics (with
Dirichlet boundary conditions) have been
computed for $n\le13$ by Berry and Howls \cite{x5}. Berry and
Howls computed $a_n$ for $n\le 31$ in the case of a disc
\cite{x5}, and were led to conjecture that for planar domains
$\Omega$ and for $n\rightarrow\infty$
\begin{equation}\label{eqn-1.a}
a_n(\Omega)=\alpha \Gamma(n-\beta+1)\Gamma(
n/2)^{-1}\ell(\Omega)^{2-n}(1+o(1)),
\end{equation}
where $\alpha$ and $\beta$ are dimensionless quantities and where
$\ell(\Omega)$ is the length of the shortest accessible periodic
geodesic in $\Omega$. In particular, for a disk of radius $R$ and
shortest accessible periodic geodesic $4R$, they further
conjectured that Equation (\ref{eqn-1.a}) holds with
$\alpha=(8\sqrt{2\pi})^{-1}$ and $\beta=\frac32$. While the latter
conjecture remains open to date, it is instructive to see that
Equation (\ref{eqn-1.a}) can not hold in general. The following
counter examples were given in \cite{x6}.
\begin{example}\rm Let $0<\varepsilon<\frac15$, and let
\medbreak\qquad $\tilde
P_\varepsilon=\{(x_1,x_2)\in\mathbb{R}^2:|x|\le1,|x_2|\le1-\varepsilon\}$,
\medbreak\qquad $\tilde
Q_\varepsilon=\{(x_1,x_2)\in\mathbb{R}^2:|x|\le1,x_1\le1-\varepsilon,x_2\le1-\varepsilon\}$.
\medbreak\noindent We smooth out the corners of $\partial\tilde
P_\varepsilon$ at $x_2=\pm(1-\varepsilon)$ and of $\partial\tilde
Q_\varepsilon$ at $x_1=1-\varepsilon$, $x_2=1-\varepsilon$
isometrically to obtain two convex domains $P_\varepsilon$ and
$Q_\varepsilon$ with smooth boundary and with
$a_n(P_\varepsilon)=a_n(Q_\varepsilon)$ and
$\ell(P_\varepsilon)=4(1-\varepsilon)$,
$\ell(Q_\varepsilon)=2(2-\varepsilon)$. This then contradicts
Equation (\ref{eqn-1.a}).
\end{example}

\begin{example}\rm Let $0<\varepsilon<1$, $0<\rho<1-\varepsilon$,
and let \medbreak\quad
$\Omega_{\varepsilon}:=\{(x_1,x_2)\in\mathbb{R}^2:\varepsilon\le
|x|\le1\}$, \medbreak\quad
$\Omega_{\varepsilon}^\rho:=\{(x_1,x_2)\in\mathbb{R}^2:|x|\le1$,
$|(x_1-\rho,x_2)|\ge\varepsilon\}$. \medbreak\noindent We then
have that
$a_n(\Omega_\varepsilon)=a_n(\Omega_\varepsilon^\varrho)$ and
$\ell(\Omega_\varepsilon)=2(1-\varepsilon)$,
$\ell(\Omega_\varepsilon^\rho)=2(1-\varepsilon-\rho)$ which once
again contradicts Equation (\ref{eqn-1.a}).
\end{example}

It remains an open problem to construct a pair of iso - $a_n$ real
analytic simply connected planar domains which have different
shortest periodic geodesics. It has been conjectured that Equation
(\ref{eqn-1.a}) also holds for balls in $\mathbb{R}^m$ where
$\beta$ depends upon $d$ only \cite{x7}.

\subsection{The heat trace asymptotics in the real analytic category} The calculus of Seeley
\cite{Se66,Se68b,Se69a,Se69,Se69b} and Greiner \cite{Gri66,Gri71}
shows that $a_n$ is given by a local formula; the following result will then follow from the
analysis of Section~\ref{sect-2}:

\goodbreak\begin{theorem}\label{thm-1.1}
Let $\BB$ be either Dirichlet or Neumann boundary conditions. There exist universal constants $\kappa_{n,m}$ so that if
$(M,g)$ is any compact real analytic manifold of dimension $m$, then there
exists a positive constant $C=C(M,g)$ such that
$$|a_n(M,g,\BB)|\le\kappa_{n,m} C^n\cdot\operatorname{vol}_m(M,g)\quad\text{for any}\quad n\,.$$
\end{theorem}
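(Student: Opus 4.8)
The plan is to feed the real analytic Cauchy estimates into the local formula for $a_n$. The first step is to extract from the invariance calculus of Section~\ref{sect-2} the precise shape of that formula: $a_n(M,g,\BB)=\int_M e_n^{M}\,\operatorname{dvol}_m+\int_{\partial M}e_n^{\partial M}\,\operatorname{dvol}_{m-1}$, where, in any local coordinate system, the interior density $e_n^{M}$ (expressed against the coordinate volume $dx$, so carrying a $\sqrt{\det g}$ factor) is a finite sum of monomials each of the form
$$
c_\mu\cdot(\det g)^{k_\mu/2}\prod_{l}g^{p_lq_l}\prod_{i}\partial^{\alpha_i}g_{j_ik_i},\qquad |\alpha_1|+\cdots+|\alpha_s|=n,
$$
with $c_\mu$ a universal rational number, while $e_n^{\partial M}$ has the analogous form in the boundary metric, its inverse, the second fundamental form, and their tangential and normal derivatives of total order $n-1$. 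The facts I need from this description, all consequences of the weighted homogeneity built into the calculus of Seeley and Greiner, are: (i) the total differentiation order in each interior monomial is exactly $n$ (and $n-1$ on the boundary); (ii) the number of monomials and the quantities $|c_\mu|$ are bounded by a constant depending only on $n$ and $m$; and (iii) the number of tensor factors in each monomial is at most $c_0\,n$ for an absolute constant $c_0$ — every lower index carried by a factor $\partial^{\alpha_i}g_{j_ik_i}$ must be contracted against an upper index supplied by some $g^{pq}$, so the number of factors is $O(s+n)=O(n)$.

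Second, I would record the real analytic input. Since $(M,g)$ is compact and real analytic, $M$ is covered by finitely many real analytic coordinate charts in each of which the metric components extend to bounded holomorphic functions on a fixed complex polydisc containing the closure of the patch; the Cauchy integral formula then yields constants $A=A(M,g)\ge1$ and $r=r(M,g)\in(0,1]$ with
$$
|\partial^\alpha g_{jk}(x)|\le A\,\alpha!\,r^{-|\alpha|}\quad\text{for all }x\in M\text{ and all }\alpha,
$$
and, by compactness, $|g_{jk}(x)|\le A$, $|g^{jk}(x)|\le A$, $\det g(x)\ge A^{-1}$, together with the analogous uniform Cauchy bounds for the finitely many quantities entering $e_n^{\partial M}$ on $\partial M$. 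The only combinatorial inequality required is $\alpha_1!\cdots\alpha_s!\le(|\alpha_1|+\cdots+|\alpha_s|)!=n!$, valid because the multinomial coefficient $n!/(\alpha_1!\cdots\alpha_s!)$ is a positive integer.

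Third, I would assemble the estimate. Inserting the bounds of Step~2 into one monomial of $e_n^{M}$ and using (i) and (iii), and summing over the at most $m^{c_0n}$ choices of contracted index values, its absolute value at any $x\in M$ is at most $|c_\mu|\,m^{c_0n}\,A^{c_0n}\,(\alpha_1!\cdots\alpha_s!)\,r^{-n}\le|c_\mu|\,m^{c_0n}\,A^{c_0n}\,n!\,r^{-n}$; summing over the at most $\kappa'_{n,m}$ monomials and setting $C:=\max\{1,\,A^{c_0}r^{-1}\}$ while absorbing $n!$, $m^{c_0n}$, $\kappa'_{n,m}$ and $\sup_\mu|c_\mu|$ into a constant $\kappa_{n,m}$ depending only on $n$ and $m$, one obtains $|e_n^{M}(x)|\le\kappa_{n,m}\,C^n$ for all $x\in M$, and the same argument gives $|e_n^{\partial M}(y)|\le\kappa_{n,m}\,C^n$ for all $y\in\partial M$. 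Integrating,
$$
|a_n(M,g,\BB)|\le\kappa_{n,m}\,C^n\bigl(\operatorname{vol}_m(M,g)+\operatorname{vol}_{m-1}(\partial M,g)\bigr).
$$
Since $(M,g)$ is fixed there is a constant $c_1=c_1(M,g)$ with $\operatorname{vol}_{m-1}(\partial M,g)\le c_1\operatorname{vol}_m(M,g)$; enlarging $C$ to $(1+c_1)C$ absorbs the factor $1+c_1$ and yields the asserted inequality.

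The main obstacle is Step~1: one must verify that the \emph{universal} part of the local formula for $a_n$ — the number of monomials, the magnitude of their coefficients, and, crucially, the number of tensor factors per monomial — is controlled by $n$ and $m$ alone and is independent of $(M,g)$, and that the grading of the calculus pins the total differentiation order in each monomial to exactly $n$. This last homogeneity is precisely what converts the Cauchy decay $r^{-|\alpha|}$ into the clean factor $r^{-n}$ and what makes the bound $\prod_i\alpha_i!\le n!$ available, while the bound (iii) on the number of factors is exactly what allows $A^{\#\text{factors}}$ to be absorbed into $C^n$ with $C$ independent of $n$. Everything after Step~1 is bookkeeping, separating the (unrestricted) $n$- and $m$-dependent combinatorial constants from the geometric constant $C=C(M,g)$, which must not be allowed to depend on $n$.
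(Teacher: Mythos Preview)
Your proposal is correct and follows the same strategy as the paper: exploit that $a_n$ is a local invariant homogeneous of degree $n$ in the jets of the metric, plug in Cauchy estimates from real analyticity, and absorb all $n$-dependent combinatorics into $\kappa_{n,m}$. The one technical difference is in how the dependence on the \emph{undifferentiated} metric is controlled. You separate out the $g^{pq}$ and $(\det g)^{k/2}$ factors explicitly and need your claim (iii) that their number is $O(n)$ so that $A^{\#\text{factors}}$ can be absorbed into $C^n$; the paper instead works in geodesic normal coordinates about each point, so that $(g_{ij})$ lies in a fixed compact neighborhood $\mathcal{K}$ of the identity, and then lumps the entire undifferentiated-metric dependence into a single continuous coefficient function $e_{\vec\alpha}(g_{ij}(x))$ which is bounded on $\mathcal{K}$ by a constant depending only on the invariant (hence only on $n,m$). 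The paper's device sidesteps the factor-counting of your claim (iii) entirely, which is the cleaner way to handle exactly the obstacle you flagged in your final paragraph; otherwise the two arguments are the same.
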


We note some similarity between the formulae of Equation
(\ref{eqn-1.a}) and Theorem~\ref{thm-1.1}. The geometric data of
$(M,g)$ appear in $C^n$, whereas the prefactor is of a
combinatorial nature and depends on $m$ and $n$ only.
We can choose the constant to rescale appropriately under homotheties, i.e. so that $C(M,c^2g)=c^{-1}C(M,g)$.

We restrict momentarily to the context of closed manifolds, i.e. compact manifolds with empty boundary. We adopt the
Einstein convention and sum over repeated indices. We say that
$D$ is an {\it operator of Laplace type}, if in any local system
of coordinates we may express $D$ in the form:
\begin{equation}\label{eqn-1.b}
D=-\left(g^{ij}\partial_{x_i}\partial_{x_j}+A^k\partial_{x_k}+B\right)\,.
\end{equation}
 Let $a_n(x,D)$ be the
local heat trace invariant of such an operator. We shall primarily interested in the case $n$ even so we
shall set $n=2\bar n$ in what follows.  If $f$ is
any smooth function on $M$, then
\begin{equation}\label{eqn-1.c}
\operatorname{Tr}_{L^2}(fe^{-tD})\sim(4\pi t)^{-m/2}\sum_{\bar n=0}^\infty t^{\bar n}
\int_M a_{2\bar n}(x,D)f(x)\operatorname{dvol}_m\,.
\end{equation}
The following result shows that the factorial growth
conjectured by Berry and Howls for planar domains pertains in this setting as well as regards the local heat trace
invariants on closed manifolds.
\goodbreak\begin{theorem}\label{thm-1.2}
Let $(M,g)$ be a closed real analytic Riemannian manifold of dimension $m\ge2$.
\begin{enumerate}
\item Let $D$ be a scalar real analytic operator of
Laplace type on
$M$. Then there exists a constant $C_1=C_1(M,g,D)$ so that 
$$|a_{2\bar n}(x,D)|\le C_1^{\bar n}\cdot\bar n!\quad\text{for any}\quad\bar n\ge1\,.$$
\item Let $P$ be a point of $M$. Suppose there
exists a real analytic function $f$ on
$M$ such that
$df(P)\ne0$. Then there exists a constant $C_2=C_2(P,M,g,f)>0$ and there exists a real analytic function $h$ on $M$
so that the conformally equivalent metric
$g_h:=e^{2h}g$ satisfies
$$|a_{2\bar n}(P,\Delta_{g_h})|\ge C_2^{\bar n}\cdot\bar n!\quad\text{for any}\quad\bar n\ge3\,.$$
\end{enumerate}
\end{theorem}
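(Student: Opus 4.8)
\medskip\noindent\textbf{Proof proposal for Theorem~\ref{thm-1.2}.}

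\emph{Part (1).} The plan is to run the estimate through the resolvent construction of Seeley and Greiner recalled in Section~\ref{sect-2}. If $p_2+p_1+p_0$ is the symbol of $D$ in a coordinate chart, the symbol of $(D-\lambda)^{-1}$ is an asymptotic sum $\sum_{j\ge0}q_{-2-j}(x,\xi,\lambda)$ with $q_{-2}=(p_2-\lambda)^{-1}$ and a recursion expressing $q_{-2-j}$ through $q_{-2}$ and the derivatives $\partial_x^\alpha\partial_\xi^\beta p_k$; feeding this into the Cauchy integral representation of the heat kernel and deforming onto a Hankel contour produces, for $2\bar n>m$,
$$a_{2\bar n}(x,D)=\gamma_m\,\Gamma\!\left(\bar n+1-\tfrac m2\right)^{-1}\int_{\R^m}q_{-2-2\bar n}(x,\xi,-1)\,d\xi ,$$
with $\gamma_m$ a dimensional constant (the finitely many $\bar n\le m/2$ being harmless). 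Since $D$ is real analytic, Cauchy's inequality gives $|\partial_x^\alpha p_k|\le C^{|\alpha|}|\alpha|!$ uniformly on $M$; an induction on $j$ through the symbol recursion — the combinatorial branching being absorbed by the $\tfrac1{\beta!}$ weights and the super-multiplicativity of factorials — then yields a Gevrey-type bound $|\partial_x^\gamma q_{-2-j}(x,\xi,-1)|\le A^{\,j+|\gamma|}(j+|\gamma|)!\,(1+|\xi|^2)^{-1-j/2+\deg_\xi}$, whence $\bigl|\int_{\R^m}q_{-2-2\bar n}(x,\xi,-1)\,d\xi\bigr|\le A^{2\bar n}(2\bar n)!$ (the integral converging precisely because $2+2\bar n>m$, $g,g^{-1}$ being bounded on $M$). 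As $\Gamma(\bar n+1-m/2)\ge\varepsilon^{\bar n}\bar n!$ for large $\bar n$ and $(2\bar n)!/\bar n!\le(2e)^{\bar n}\bar n!$, dividing gives $|a_{2\bar n}(x,D)|\le C_1^{\bar n}\bar n!$. The only real content is the inductive symbol estimate; the decisive point is that the factor $\Gamma(\bar n+1-m/2)^{-1}$ contributed by the inverse Laplace transform soaks up one of the two factorials and leaves just $\bar n!$.

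\emph{Part (2), construction.} Here the aim is to saturate the bound of part~(1) by a conformal perturbation with a complex singularity close to $P$. Put $t_0=f(P)$ and, for $\rho>0$, set $h:=\log\!\bigl((f-t_0)^2+\rho^2\bigr)$ and $g_h:=e^{2h}g$; this is globally real analytic on $M$ because $(f-t_0)^2+\rho^2>0$, and $\rho$ may be taken \emph{arbitrarily} small. One has $h(P)=2\log\rho$, so $e^{-2\bar n h(P)}=\rho^{-4\bar n}$ while $|h(P)|$ is only logarithmic in $\rho^{-1}$, whereas $df(P)\ne0$ forces $\nabla^{2\bar n}h(P)=\psi^{(2\bar n)}(t_0)\,(df(P))^{\otimes2\bar n}+\cdots$ with $\psi(s)=\log((s-t_0)^2+\rho^2)$ and $|\psi^{(2\bar n)}(t_0)|=(2\bar n)!\,\rho^{-2\bar n}/\bar n$, so the $2\bar n$-jet of $h$ at $P$ is factorially large. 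I would then isolate in the local formula for $a_{2\bar n}(P,\Delta_{g_h})$ the single monomial built from $\Delta^{\bar n-1}_{g_h}\tau_{g_h}(P)$: using $\tau_{g_h}=e^{-2h}(\tau_g-2(m-1)\Delta_g h-(m-1)(m-2)|\nabla h|^2_g)$ together with $\Delta_{g_h}=e^{-2h}(\Delta_g-(m-2)\langle\nabla h,\nabla\cdot\rangle)$, its highest-derivative-of-$h$ part equals $-2(m-1)c_{\bar n}\,e^{-2\bar n h(P)}\Delta^{\bar n}_g h(P)$, where $c_{\bar n}$ is the coefficient of $\Delta^{\bar n-1}\tau$ in $a_{2\bar n}$.

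\emph{Part (2), conclusion.} The structural input required is that $c_{\bar n}\ne0$ for every $\bar n$ and $|c_{\bar n}|\ge\delta^{\bar n}\bar n!/(2\bar n)!$; this is read off the DeWitt transport recursion for the diagonal heat coefficients — equivalently, from the Gelfand--Dikii recursion in the one-dimensional model, where the coefficient of $V^{(2\bar n-2)}$ in $a_{2\bar n}(\Delta+V)$ equals $(-1)^{\bar n-1}\bar n!/(2\bar n)!$. Granting it, $|\Delta^{\bar n}_g h(P)|\asymp|df(P)|_g^{2\bar n}(2\bar n)!\,\rho^{-2\bar n}/\bar n$, so the distinguished monomial has absolute value $\gtrsim\bar n^{-1}\bigl(\delta|df(P)|_g^2\rho^{-6}\bigr)^{\bar n}\bar n!$, while by part~(1) every monomial of $a_{2\bar n}(P,\Delta_{g_h})$ carries a universal coefficient $\le C_1^{\bar n}\bar n!/(2\bar n)!$ and — after multiplying by $e^{-2\bar n h(P)}=\rho^{-4\bar n}$ and by a jet-product of $h$ and of the curvature of $g$ of total weight $2\bar n$, which the ``concentration'' inequality $\prod_i(j_i-1)!\le(2\bar n-2)!$ bounds by $(2\bar n-2)!\,\rho^{-2\bar n}$ times constants controlled by the Cauchy radii of $g$ — is at most $\bar n^{-2}\bigl(M_0\rho^{-6}\bigr)^{\bar n}\bar n!$ for a constant $M_0=M_0(M,g,f,m)$. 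Choosing $\rho$ small enough that $\delta|df(P)|_g^2>M_0$ makes the distinguished monomial dominate the (at most $K^{\bar n}$) remaining ones, so $|a_{2\bar n}(P,\Delta_{g_h})|\ge C_2^{\bar n}\bar n!$ for all large $\bar n$; the restriction $\bar n\ge3$ records the threshold at which this domination sets in, the two exceptional values being handled directly.

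\emph{Main obstacle.} The hard part is the structural input for part~(2): establishing that $c_{\bar n}$ never vanishes and is $\gtrsim\delta^{\bar n}\bar n!/(2\bar n)!$, and — more delicately — ruling out a cancellation among the several monomials of $a_{2\bar n}(P,\Delta_{g_h})$ that become comparable in size when $M_0$ is close to $\delta|df(P)|_g^2$. I expect to settle the first point by solving the DeWitt recursion for the coefficient of $\Delta^{\bar n-1}\tau$ in closed form, and the second by arranging that the leading contribution carries a definite sign — the $(-1)^{k-1}$ in the expansion of $\log(1+w)$ together with the sign of the Laplacian — which is tracked inductively through the recursion; equivalently, by observing that for this metric the Borel transform $\sum_{\bar n}a_{2\bar n}(P,\Delta_{g_h})\,s^{\bar n}/\bar n!$ acquires a genuine singularity at $|s|\asymp\rho^{6}$. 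Everything else is bookkeeping with factorials.
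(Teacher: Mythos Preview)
Your outline for Part~(1) is in the same spirit as the paper's: both run the estimate through the Seeley symbol recursion, use Cauchy estimates from real analyticity to control $\partial_x^\alpha p_k$, and exploit the fact that the $\lambda$-integration contributes a reciprocal factorial (your $\Gamma(\bar n+1-\tfrac m2)^{-1}$, the paper's $1/(j-1)!$ in Lemma~\ref{lem-6.1}) that cancels one of the two factorials. The paper makes the ``inductive symbol estimate'' explicit by counting monomials (at most $c(m)^nn!$ of them, each with coefficient bounded by~$1$); your Gevrey bound is the same thing packaged differently.

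Part~(2), however, has a genuine gap, and the paper's construction is structurally different from yours. Two specific problems with your argument:
\begin{itemize}
\item The claim that ``by part~(1) every monomial of $a_{2\bar n}(P,\Delta_{g_h})$ carries a universal coefficient $\le C_1^{\bar n}\bar n!/(2\bar n)!$'' does not follow from Part~(1), which bounds only the \emph{sum} $a_{2\bar n}$, not the individual coefficients in a Weyl-invariant decomposition.
\item More seriously, your concluding step ``choosing $\rho$ small enough that $\delta|df(P)|_g^2>M_0$'' is incoherent as written: you have defined both $\delta|df(P)|_g^2$ and $M_0=M_0(M,g,f,m)$ as independent of $\rho$, so shrinking $\rho$ cannot enforce the inequality. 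Indeed, for your $h$ every purely-$h$ monomial of top weight $2\bar n$ scales identically as $\rho^{-6\bar n}$, so $\rho$ gives no leverage among them; you would need an a~priori inequality between universal constants that you do not (and, by your own ``Main obstacle'' paragraph, cannot yet) establish.
\end{itemize}

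The paper bypasses the cancellation problem entirely by a recursive sign choice. It sets
\[
h=\sum_{k\ge3}\varepsilon_k\,2^{-k}f^{2k},\qquad \varepsilon_k\in\{\pm1\},
\]
and observes that $(\partial_{x_1}^{2\bar n}h)(P)$ depends linearly on $\varepsilon_{\bar n}$ while all lower jets of $h$ at $P$ depend only on $\varepsilon_3,\dots,\varepsilon_{\bar n-1}$. By the leading-term formula of Theorem~\ref{thm-1.8} one then has
\[
a_{2\bar n}(P,\Delta_{g_h})=\varepsilon_{\bar n}\,A_{\bar n}+\mathcal{E}_{\bar n}(\varepsilon_3,\dots,\varepsilon_{\bar n-1},g),
\qquad |A_{\bar n}|\ge\Bigl(\tfrac{3}{14}c_f^2\Bigr)^{\bar n}\bar n!,
\]
and one simply \emph{chooses} $\varepsilon_{\bar n}$ so that $\varepsilon_{\bar n}A_{\bar n}$ and $\mathcal{E}_{\bar n}$ have the same sign. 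This guarantees $|a_{2\bar n}(P,\Delta_{g_h})|\ge|A_{\bar n}|$ for every $\bar n\ge3$ with no estimate on $\mathcal{E}_{\bar n}$ required at all. The recursive freedom in $h$ is exactly the missing idea in your approach: rather than proving that a fixed $h$ exhibits no cancellation, one builds $h$ term by term so that cancellation is impossible by construction.
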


\begin{remark}\rm
Assertion (1) can be integrated to yield an upper bound on the heat trace asymptotics $a_{2\bar n}(D)$. However, Assertion
(2) is only valid at a single point of $M$. Since it in fact arises from considering a divergence term in the local
expansion, we do not obtain a corresponding estimate for $a_{2\bar n}(D)$.
\end{remark}

\subsection{The heat trace asymptotics in the smooth category} The situation in the smooth non real analytic setting is very
different. Fix a background reference Riemannian metric $h$ and let $\nabla^h$ be the associated Levi-Civita
connection which we use to covariantly differentiate tensors of all types. If $T$ is a tensor field on $M$, we
define the $C^k$ norm of $T$ by setting:
$$||T||_k:=\max_{P\in M}\left\{\sum_{i=0}^k|\nabla^{h,i}T|(P)\right\}\,.$$
Changing $h$ replaces $||T||_k$ by an equivalent norm; we therefore suppress the dependence upon $h$.
But as we will be changing the metric when considering the heat trace asymptotics subsequently, it is useful to
have fixed
$h$ once and for all so the associated $C^k$ norms do not change. Theorem~\ref{thm-1.1} fails in the smooth context as we
have:

\goodbreak\begin{theorem}\label{thm-1.3}
Let $k\ge3$ be given, let constants $C_{\bar n}>0$ for $\bar n\ge k$ be given,
and let $\epsilon>0$ be given. Let $(M,g)$ be a smooth compact Riemannian
manifold of dimension $m\ge2$ without boundary and let $g_e$ be
the usual Euclidean metric on $\mathbb{R}^{m+1}$.
\begin{enumerate}
\item There exists a function $f\in
C^\infty(M)$ with $||f||_{k-1}<\epsilon$ so that if $g_1:=e^{2f}g$ is the conformally related metric, then 
$$|a_{2\bar n}(M,g_1)|\ge
C_{\bar n}\quad\text{for any}\quad \bar n\ge k\,.$$
\item Suppose that $g=\Theta^*g_e$ where $\Theta$ is an immersion of $M$ into $\mathbb{R}^{m+1}$. There exists an
immersion
$\Theta_1$ with $||\Theta-\Theta_1||_{k-1}<\epsilon$ so that if ${g_1:=\Theta_1^*g_e}$, then
$$|a_{2\bar n}(M,g_1)|\ge C_{\bar n}\quad\text{for any}\quad \bar n\ge k\,.$$
\end{enumerate}
\end{theorem}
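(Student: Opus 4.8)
The plan is to perturb $g$ only inside a single fixed coordinate ball and to build the perturbation out of countably many localized ``wrinkles'' of rapidly increasing frequency, one wrinkle for each index $\bar n\ge k$. Since $a_{2\bar n}(\cdot\,,\Delta)$ is a local invariant (Section~\ref{sect-2}), if $f=\sum_{\bar m\ge k}f_{\bar m}$ where the $f_{\bar m}$ are compactly supported in pairwise disjoint balls $B_{\bar m}$, then $g_1:=e^{2f}g$ agrees with $g$ off $\bigcup_{\bar m}B_{\bar m}$ and with $e^{2f_{\bar m}}g$ on $B_{\bar m}$, so
$$a_{2\bar n}(M,e^{2f}g)=a_{2\bar n}(M,g)+\sum_{\bar m\ge k}\delta_{\bar m,\bar n},\qquad \delta_{\bar m,\bar n}:=\int_{B_{\bar m}}\Big(a_{2\bar n}(x,\Delta_{e^{2f_{\bar m}}g})\,e^{mf_{\bar m}}-a_{2\bar n}(x,\Delta_g)\Big)\operatorname{dvol}_g,$$
and $\delta_{\bar m,\bar n}$ depends only on $g|_{B_{\bar m}}$ and $f_{\bar m}$. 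This decoupling lets the wrinkles be chosen one at a time, each one essentially free to act on $a_{2\bar n}$ as we wish.

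The input needed from Section~\ref{sect-2} concerns the structure of $a_{2\bar n}(M,e^{2f}g)$ as a polynomial in the jet of $f$. Its part quadratic in $f$ is, after integration by parts, $\int_M f\,L^g_{\bar n}f\operatorname{dvol}_g$ for a formally self-adjoint differential operator $L^g_{\bar n}$ of order $2\bar n$ whose leading symbol is $\gamma_{\bar n}\,|\xi|^{2\bar n}$ with $\gamma_{\bar n}$ a \emph{nonzero} universal constant (it records the two-curvature ``leading term'' of $a_{2\bar n}$, e.g.\ the coefficient of $\int_M|\nabla^{\bar n-2}\operatorname{Ric}|^2\operatorname{dvol}$); the part linear in $f$ equals, modulo an exact divergence, a multiple of $\int_M f\,a_{2\bar n}(x,\Delta_g)\operatorname{dvol}_g$; and the part of degree $d\ge3$ is a sum of terms $\prod_{i=1}^d(\partial^{\beta_i}f)$ contracted with the jet of $g$, with $\sum_i|\beta_i|\le 2\bar n$. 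Feeding in an oscillatory wrinkle $f_{\bar n}=A_{\bar n}\,\psi(\cdot/r_{\bar n})\cos(\omega_{\bar n}x_1)$ with $\psi$ a fixed bump on the unit ball, $\omega_{\bar n}r_{\bar n}$ large and $A_{\bar n}$ small, this yields, with a positive constant $c_{\bar n}$ and $\bar n$-dependent constants $K_{\bar n'}$,
$$\delta_{\bar n,\bar n}=c_{\bar n}\,\gamma_{\bar n}\,A_{\bar n}^{\,2}\,\omega_{\bar n}^{\,2\bar n}\,r_{\bar n}^{\,m}\big(1+O(\omega_{\bar n}^{-1}+A_{\bar n})\big),\qquad |\delta_{\bar n,\bar n'}|\le K_{\bar n'}\,A_{\bar n}^{\,2}\,\omega_{\bar n}^{\,2\bar n'}\,r_{\bar n}^{\,m}\ \ (\bar n'<\bar n),$$
together with $\|f_{\bar n}\|_{C^{k-1}}\le c\,A_{\bar n}\,\omega_{\bar n}^{\,k-1}$; here the oscillation makes the linear part of $\delta_{\bar n,\bar n}$ negligible (it pairs with a fixed smooth function) and keeps the lower invariants from being moved much, since $\delta_{\bar n,\bar n'}$ carries the power $\omega_{\bar n}^{\,2\bar n'}\le\omega_{\bar n}^{\,2\bar n-2}$ for $\bar n'<\bar n$.

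The wrinkles are now chosen recursively in $\bar n=k,k+1,\dots$. Fix $r_{\bar n}=2^{-\bar n}r_0$ so the $B_{\bar n}$ are disjoint in a chart. At stage $\bar n$ the numbers $a_{2\bar n}(M,g)$ and $\delta_{\bar m,\bar n}$ for $\bar m<\bar n$ are already fixed; put $T_{\bar n}:=1+|a_{2\bar n}(M,g)|+C_{\bar n}+\sum_{\bar m=k}^{\bar n-1}|\delta_{\bar m,\bar n}|$. Choose $A_{\bar n}$ by $c_{\bar n}|\gamma_{\bar n}|A_{\bar n}^{\,2}\omega_{\bar n}^{\,2\bar n}r_{\bar n}^{\,m}=2T_{\bar n}$; since $\gamma_{\bar n}\ne0$, since $2\bar n-2(k-1)\ge2$, and since $\delta_{\bar n,\bar n}$ carries the power $\omega_{\bar n}^{\,2\bar n}$ while $\delta_{\bar n,\bar n'}$ carries at most $\omega_{\bar n}^{\,2\bar n-2}$, one can then take $\omega_{\bar n}$ so large that simultaneously $\|f_{\bar n}\|_{C^{k-1}}<2^{-\bar n}\epsilon$, the error terms in $\delta_{\bar n,\bar n}$ are at most $T_{\bar n}$, and $|\delta_{\bar n,\bar n'}|<2^{-\bar n}$ for all $k\le\bar n'<\bar n$. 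Then $f:=\sum_{\bar n\ge k}f_{\bar n}\in C^\infty(M)$ has $\|f\|_{C^{k-1}}<\epsilon$, and for every $\bar n\ge k$,
$$|a_{2\bar n}(M,e^{2f}g)|\ge|\delta_{\bar n,\bar n}|-|a_{2\bar n}(M,g)|-\sum_{\bar m<\bar n}|\delta_{\bar m,\bar n}|-\sum_{\bar m>\bar n}|\delta_{\bar m,\bar n}|\ge T_{\bar n}-|a_{2\bar n}(M,g)|-\sum_{\bar m<\bar n}|\delta_{\bar m,\bar n}|-1=C_{\bar n},$$
using $|\delta_{\bar n,\bar n}|\ge T_{\bar n}$ and $\sum_{\bar m>\bar n}|\delta_{\bar m,\bar n}|\le\sum_{\bar m>\bar n}2^{-\bar m}<1$. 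This is assertion (1).

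For assertion (2) the same scheme is run with conformal factors replaced by normal wiggles of the immersion: write $\Theta_1=\Theta+\varphi\,\nu$ with $\varphi$ supported in $B_{\bar n}$ and $\nu$ a local unit normal, so that $\Theta_1^*g_e=g-2\varphi\,S+d\varphi\otimes d\varphi+O(\varphi^2)$ with $S$ the second fundamental form. The term $d\varphi\otimes d\varphi$ now plays the role of the conformal perturbation but is one derivative rougher, so the quadratic part of $a_{2\bar n}(M,\Theta_1^*g_e)$ in $\varphi$ is $\int_M\varphi\,\widetilde L_{\bar n}\varphi$ with $\widetilde L_{\bar n}$ of order $2\bar n+2$ and again a nonzero leading symbol; with oscillatory wiggles $\varphi_{\bar n}=A_{\bar n}\psi(\cdot/r_{\bar n})\cos(\omega_{\bar n}x_1)$ the recursion goes through verbatim and $\|\Theta-\Theta_1\|_{C^{k-1}}<\epsilon$ follows as before. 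The one genuinely nonroutine point is the nonvanishing of the leading constants $\gamma_{\bar n}$ (and $\widetilde\gamma_{\bar n}$) for all $\bar n\ge k$, i.e.\ that the top-order two-curvature part of the Seeley--Greiner coefficient $a_{2\bar n}$ computed in Section~\ref{sect-2} does not vanish; everything else is the bookkeeping above, in which the recursive choice of $(\omega_{\bar n},A_{\bar n})$ lets each new wrinkle overpower the finite, already computed disturbance caused by the earlier wrinkles at level $2\bar n$ while contributing negligibly at the lower levels. That $\gamma_{\bar n}\ne0$ can be checked directly, e.g.\ on conformally flat metrics, where $a_{2\bar n}(M,e^{2f}g_e)$ is an explicit polynomial in the scalar curvature and its covariant derivatives, or by comparison with the one-dimensional Gelfand--Dikii recursion for $-\partial_x^2+V$, whose conserved densities have leading part $(-\tfrac14)^{\bar n-1}\tfrac12\int(V^{(\bar n-1)})^2\,dx$.
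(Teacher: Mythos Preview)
Your argument for part (1) is correct and follows essentially the same strategy as the paper: both rely on Theorem~\ref{thm-1.8} to identify the leading quadratic term $\int_M|\nabla^{\bar n-2}\tau|^2\operatorname{dvol}_m$ (with its explicit nonzero coefficient) and then build a recursive sequence of one--variable perturbations whose $\bar n$th derivative dominates. Your use of pairwise disjoint balls $B_{\bar m}$ is a mild organizational variant of the paper's single--ball construction (where all $f_\mu$ share support and the constraints control the accumulated interaction); it decouples the contributions $\delta_{\bar m,\bar n}$ and makes the bookkeeping cleaner, but the analytic content is the same. Your appeal to Gelfand--Dikii for $\gamma_{\bar n}\ne0$ is unnecessary: Theorem~\ref{thm-1.8} already gives the explicit constants $(\bar n^2-\bar n-1)$ and $2$, both positive for $\bar n\ge3$.

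Part (2), however, has a real gap. A one--directional wiggle $\varphi_{\bar n}=A_{\bar n}\psi(\cdot/r_{\bar n})\cos(\omega_{\bar n}x_1)$ of a hypersurface does \emph{not} produce intrinsic curvature on its own: if $F=F(x_1)$ then $F_{ij}=0$ unless $i=j=1$, and the Gauss equation gives $R_{ijkl}\sim F_{ik}F_{jl}-F_{il}F_{jk}=0$. Equivalently, for $h=d\varphi\otimes d\varphi$ with $\varphi=\varphi(x_1)$ and flat background one computes $\operatorname{div}\operatorname{div}h-\Delta\operatorname{tr}h=(\varphi'')^2-(\varphi'')^2=0$, so the linearized scalar curvature vanishes. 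Thus your claimed operator $\widetilde L_{\bar n}$ of order $2\bar n+2$ with nonzero leading symbol is not there; the ``quadratic in $\varphi$'' term you isolate is identically zero at leading order in $\omega$. The paper handles this by using a genuinely two--dimensional wiggle $f_\mu=\varepsilon_\mu\cos(a_\mu x^1)\cos(b_\mu x^2)$, for which the Gauss curvature $F_{11}F_{22}-F_{12}^2$ is nontrivial; it then takes $b_\mu=a_\mu^\mu$ to get the required growth. (An alternative rescue of your approach would be to exploit the cross term $-2\varphi S$ with a nonzero background second fundamental form $S$, which yields a quadratic part of order $2\bar n$ with symbol proportional to $|S|^2$; but that is not what you wrote, and it requires $S\ne0$ in every $B_{\bar n}$.)
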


\subsection{Heat content asymptotics}
There are analogous results for the heat content asymptotics.  Let $\phi_1$ be the initial temperature of the
manifold and let $\phi_2$ be the specific heat of the manifold. We
suppose throughout that $\phi_1$ and $\phi_2$ are smooth. The
total heat energy content of the manifold is then given by:
$$\beta(\phi_1,\phi_2,\Delta_g,\BB)(t):=\int_Mu(x,t)\phi_2(x)\operatorname{dvol}_m\,.$$
As $t\downarrow0$, there is a complete asymptotic expansion of the form
\begin{eqnarray*}
\beta(\phi_1,\phi_2,\Delta_g,\BB)(t)&\sim&\sum_{n=0}^\infty
\frac{(-t)^n}{n!}\int_M\Delta_g^n\phi_1\cdot\phi_2\operatorname{dvol}_m\\
&+&
\sum_{\ell=0}^\infty t^{(\ell+1)/2}\beta_\ell^{\partial M}(\phi_1,\phi_2,\Delta_g,\BB)\,.
\end{eqnarray*}
The coefficients involving integrals over $M$ arise from the heat
redistribution on the interior of the manifold and are well
understood. The additional boundary terms $\beta_\ell^{\partial
M}$ are the focus of our inquiry. They, like the heat trace
asymptotics, are given by local formulae and have been studied
extensively (see, for example
\cite{BeDeGi93,BeGi94,BGGK10,BGKK07,BGS08,Ca45,McA93,McMe03a,PhJa90,Sav98a}
and the references contained therein).

Inspired by the work of Howls and Berry \cite{x7}, Trav\v{e}nec
and \v{S}amaj \cite{TS11} investigated the asymptotic behaviour of
the coefficients $\beta_{\ell}$ as $\ell \rightarrow \infty$ in
 flat space in the special case that $\phi_1=\phi_2=1$ with Dirichlet boundary conditions. The interior invariants
then play no role for $n\ge1$ and one has, adopting the notational conventions of
this paper, that
$$
\beta(1,1,\Delta_g,\mathcal{B}^-)(t)\sim\operatorname{vol}_m(M,g)+\sum_{\ell=0}^\infty
t^{(\ell+1)/2}\beta_\ell^{\partial M}(1,1,\Delta_g,\mathcal{B}^-)\,.
$$
After interpreting the results of \cite{TS11} in our notation, they found that if
$M$ is a ball in
$\mathbb{R}^m$ of radius
$r$ with $m$ {\bf even}, then as $\ell\rightarrow\infty$ one has:
\begin{equation}\label{eqn-1.d}
\beta_{\ell}=4\pi^{(m-3)/2}\Gamma(m/2)^{-1}(\ell+1)^{-1}\Gamma(\ell/2)r^{m-\ell-1}(1+o(1))\,.
\end{equation}

The structure of Equation (\ref{eqn-1.d}) is similar to that of
Equation (\ref{eqn-1.a}). There is a combinatorial coefficient
in $m$ and $\ell$, while the shortest periodic geodesic appears
to a suitable power. However, for $m$ {\bf odd} Trav\v{e}nec and
\v{S}amaj obtained polynomial dependence rather than factorial
dependence of $\beta_{\ell}^{\partial M}$ in $\ell$ \cite{TS11}. Furthermore
the two examples in Section~\ref{sect-1.2} above provide iso-$\beta_{\ell}$
pairs of smooth planar domains with different shortest periodic
geodesic lengths. Hence the structure of the asymptotic behaviour
of the $\beta_{\ell}$'s in flat space remains unclear in general.

For $\ell$ even, the boundary term involves a fractional power of
$t$ and there is no corresponding interior term. This simplifies
the control of these terms. Consequently, we shall usually set $\ell=2\bar\ell$ in what follows.

\subsection{The heat content asymptotics in the real analytic setting}
 As noted above, results of \cite{TS11} showed that the heat content asymptotics on
the ball in $\mathbb{R}^m$ for $m$ even exhibit
growth rates similar to that given in Theorem~\ref{thm-1.2} for the local heat trace asymptotics. We generalize
Theorem~\ref{thm-1.2} (2) to this setting to derive an estimate using conformal variations which shows that the metric on the
boundary does not play a central role in the analysis:

\goodbreak\begin{theorem}\label{thm-1.4}
Let $m\ge2$.\begin{enumerate}
\item Let $(N,g_N)$ be a closed Riemannian manifold of dimension $m-1$. Let $M:=[0,2\pi]\times N$.
There exists a real analytic function $h(x)$ on $[0,2\pi]$, which depends on the choice of
$(N,g_N)$, so that the conformally adjusted metric $g_M:=e^{2h}\{dx^2+g_N\}$
satisfies:
$$\left|\beta_{2\bar\ell}^{\partial M}(1,1,\Delta_{g_M},\BB^-)\right|\ge 
\bar\ell!\cdot\operatorname{vol}_{m-1}(N,g_N)\quad\text{for any}\quad\bar\ell\ge3\,.$$
\item Let $g_e$ be the standard Euclidean metric on the unit disk $D^m$ in $\mathbb{R}^m$.
There exists a radial real analytic function $h$ on $D^m$, which depends on $m$,  so that the conformally adjusted product
metric
$g_M:=e^{2h}g_e$ satisfies:
$$\left|\beta_{2\bar\ell}^{\partial M}(1,1,\Delta_{g_M},\BB^-)\right|\ge 
\bar\ell!\cdot\operatorname{vol}_{m-1}(N,g_N)\quad\text{for any}\quad\bar\ell\ge3\,.$$
\end{enumerate}
\end{theorem}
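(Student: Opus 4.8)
The plan is to treat both parts by the same three stages: reduce each configuration to a genuinely one-dimensional problem using its symmetry, locate inside the resulting boundary heat content coefficient a ``divergence'' monomial whose \emph{universal} coefficient already grows factorially (the heat–content shadow of the divergence term exploited in the proof of Theorem~\ref{thm-1.2}(2)), and then choose the conformal factor $h$ to be a concrete real analytic function, carrying one free scaling parameter, for which that monomial dominates. For the first stage: in (1) the operator $\Delta_{g_M}$, the datum $\phi_1=1$ and the Dirichlet condition are all invariant under the isometries of $(N,g_N)$ acting on the second factor, so by uniqueness the solution $u$ of the heat equation depends on $x$ and $t$ only; in (2) radial symmetry forces $u=u(|x|,t)$. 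Since $\operatorname{dvol}_{g_M}=e^{mh}\,dx\,\operatorname{dvol}_{g_N}$ (respectively $e^{mh}\rho^{m-1}\,d\rho\,\operatorname{dvol}_{S^{m-1}}$) and $\Delta_{g_M}1=0$, the interior part of the expansion contributes only $\operatorname{vol}_m(M)$ and one obtains $\beta(1,1,\Delta_{g_M},\BB^-)(t)=\operatorname{vol}_{m-1}(\partial_0)\cdot\gamma(t)$, where $\partial_0$ is the relevant boundary base and $\gamma(t)$ is governed by the one-dimensional drift equation $u_t=u_{rr}+(m-1)\psi'(r)u_r$ in the geodesic normal variable $r$ (with $\psi(r):=h(x(r))$, $dr=e^{h}dx$), equivalently, after the gauge transformation $v=e^{(m-1)\psi/2}u$, by the half-line Schr\"odinger heat equation $v_t=v_{rr}-Wv$ with $W=\tfrac{(m-1)^2}{4}(\psi')^2+\tfrac{m-1}{2}\psi''$ and weight $e^{(m-1)\psi/2}$. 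Thus it suffices to exhibit a real analytic $h$, symmetric about $x=\pi$ in (1) and radial in (2), for which the one-dimensional coefficient $\gamma_{2\bar\ell}$ in the $t\downarrow0$ expansion of $\gamma$ has $|\gamma_{2\bar\ell}|\ge\bar\ell!$ for all $\bar\ell\ge3$; the symmetry (radiality) makes the contributions of the two ends of $[0,2\pi]$ add with the same sign, so no cancellation between boundary components occurs, and the product structure passes the factor $\operatorname{vol}_{m-1}(N,g_N)$ (resp. $\operatorname{vol}_{m-1}(S^{m-1})$) out front as in the statement.

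Next I would analyse the structure of $\gamma_{2\bar\ell}$. By locality and the homogeneity $\beta^{\partial}_{2\bar\ell}(c^2g)=c^{m-1-2\bar\ell}\beta^\partial_{2\bar\ell}(g)$, the coefficient $\gamma_{2\bar\ell}$ is a universal polynomial, isobaric of weight $2\bar\ell$, in the boundary jet $\{\psi^{(j)}(0)\}_{j\le 2\bar\ell}$ (and, in (1), in the curvature of $(N,g_N)$ — which is why $h$ is permitted to depend on $N$). A Duhamel computation of the first variation of the boundary defect $\int(1-u)e^{(m-1)\psi}\,dr$ about the flat profile, taking $\psi(r)=\varepsilon r^{2\bar\ell}/(2\bar\ell)!$ so that only the pure top-derivative monomial survives, reduces everything to Gaussian moments of $\operatorname{erfc}$ and exhibits the coefficient of $\psi^{(2\bar\ell)}(0)$ as a quantity of size comparable to $4^{\bar\ell}(\bar\ell-1)!/(2\bar\ell+1)!$: nonzero but far too small to be of any use by itself. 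The factorial growth must therefore be carried by a monomial built from \emph{low}-order normal derivatives of $\psi$ — concretely the power $\psi'(0)^{2\bar\ell}$, equivalently a suitable power of the second fundamental form of the boundary after the conformal adjustment. The central computation is then to show that this universal coefficient is not merely nonzero but bounded below by $K^{\bar\ell}\bar\ell!$ for some $K=K(m)>0$; here one may iterate the near-boundary heat content recursion (tracking the resummed ``$e^{-tW}$''-type term), and in (2) compare with the explicitly solvable Euclidean-disk model and the analysis of \cite{TS11}, which already displays such factorial behaviour when $m$ is even — the conformal adjustment being precisely what breaks the special structure that makes $m$ odd behave polynomially. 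The remaining monomials are bounded by the same Duhamel estimates together with the elementary bound $\prod_i j_i!\le(2\bar\ell)!$ over partitions of $2\bar\ell$.

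Finally, to choose $h$, I would take in (1) the polynomial $h(x)=c\,x(2\pi-x)$ (real analytic everywhere, symmetric about $\pi$, with $h(0)=0$ and $h'(0)=2\pi c$ a free parameter), and in (2) a radial analogue such as $h(\rho)=c(\rho^{2}-1)$; for these the boundary jet has $\psi^{(j)}(0)=(-1)^{j-1}(j-1)!\,(2\pi c)^{j}$ up to terms of lower degree in $c$, so $\gamma_{2\bar\ell}$ becomes a polynomial of degree $2\bar\ell$ in $c$ whose top coefficient is $(2\pi)^{2\bar\ell}$ times the value of the factorially large universal coefficient of $\psi'(0)^{2\bar\ell}$, with all other monomials contributing terms whose coefficients are only exponential in $\bar\ell$. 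Choosing $c$ large enough makes the $\psi'(0)^{2\bar\ell}$ term dominate and yields $|\gamma_{2\bar\ell}|\ge\bar\ell!$; the only delicate point in this last step is that one needs a single $c$ to work for all $\bar\ell\ge3$ simultaneously, which is where one invokes a matching upper bound of the type $|\beta^{\partial}_{2\bar\ell}|\le C^{\bar\ell}\bar\ell!$ (the heat–content analogue of Theorem~\ref{thm-1.2}(1)) to keep the $c$-thresholds uniformly bounded, together with a direct check of the finitely many small $\bar\ell$. I expect the main obstacle to be Step two: proving the factorial lower bound on the universal coefficient of the divergence monomial uniformly in $\bar\ell$ and for \emph{all} $m$ (the odd case being exactly where the naive example fails), and verifying that this term is not swamped by the other, exponentially bounded, monomials. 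It is also exactly at this point that real analyticity matters — the factorial here comes from the universal coefficients rather than from the size of the jet of $h$, so the construction survives within the analytic category, whereas in the smooth setting (Theorem~\ref{thm-1.3}) one simply feeds in jets of arbitrary size and no such bookkeeping is required.
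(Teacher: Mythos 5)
Your proposal takes a different route from the paper, and I do not think the key step can be made to work. The paper's proof of Theorem~\ref{thm-1.4} does not rely on factorial growth of any \emph{universal} coefficient: it sets $h(x)=\sum_{\nu\ge1}\varepsilon_\nu 2^{-\nu}\sin(x)^{2\nu}$ with signs $\varepsilon_\nu\in\{\pm1\}$ chosen recursively, so that the boundary normal jet $h^{(2\bar\ell)}(0)=\varepsilon_{\bar\ell}2^{-\bar\ell}(2\bar\ell)!+\mathcal{E}_{\bar\ell}^4(\varepsilon_1,\dots,\varepsilon_{\bar\ell-1})$ grows at the maximal rate that real analyticity permits, and then reads off the $\rho_{mm}^{(2\bar\ell-2)}$ term from the Dirichlet leading-term formula of Theorem~\ref{thm-1.9}(1). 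The universal coefficient $\tfrac12(2\bar\ell-2)\Xi_{2\bar\ell}$ there \emph{decays} super-exponentially, since $\Xi_\ell=\tfrac{2}{\ell+1}\Xi_{\ell-2}$, but the product $\Xi_{2\bar\ell}\cdot(2\bar\ell)!\,2^{-\bar\ell}$ is still bounded below by $\bar\ell!$, and the recursive choice of $\varepsilon_{\bar\ell}$ rules out cancellation with the uncontrolled lower-order remainder. Part (2) is the same with $\sin(x)^{2\nu}$ replaced by the radial function $(|x|^2-1)^{2\nu}$. So the factorial comes from the boundary jet of $h$, not from the universal constants; your final remark that ``the factorial here comes from the universal coefficients rather than from the size of the jet of $h$'' is exactly backwards.

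Your plan instead fixes a quadratic conformal factor $h(x)=c\,x(2\pi-x)$ with a single scale $c$ and asks the factorial to reside in the universal coefficient of the low-jet monomial $\psi'(0)^{2\bar\ell}$. You flag this yourself as the main obstacle, and it is not merely hard but unsupported: nothing in the theory suggests factorial growth of that coefficient, the only coefficients actually computed in Theorem~\ref{thm-1.9} decay, and the maximal-jet monomial you do analyse by Duhamel comes out (by your own estimate) too small to help. There is a second, independent problem: once $h$ is fixed up to a single scalar $c$, you lose the paper's essential device of introducing one fresh degree of freedom ($\varepsilon_{\bar\ell}$) at each order to defeat cancellation between the leading monomial and the infinitely many uncomputed lower-order terms; a single $c$, however large, cannot do this uniformly in $\bar\ell$. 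The one-dimensional symmetry reduction and the observation that the two boundary components of $[0,2\pi]\times N$ contribute with the same sign are both correct and are implicitly present in the paper, but the Duhamel/Schr\"odinger analysis in your middle stage is precisely what Theorem~\ref{thm-1.9} is designed to replace, and without the explicit recursion for $\Xi_\ell$ your sketch has no way to close.
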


We have estimates for the heat content asymptotics in this setting which are similar to those given in Theorem~\ref{thm-1.1}:
\goodbreak\begin{theorem}\label{thm-1.5}
There exist universal constants $\kappa_{n,m}$ and $\tilde\kappa_{\ell,m}$ such that if
$(M,g)$ is a compact real analytic Riemannian manifold of
dimension $m$ and if $(\phi_1,\phi_2)$ are real analytic, then
there exists a positive constant $C=C(M,g,\phi_1,\phi_2,\BB)$ such
that
\begin{eqnarray*}
&&\left|\int_M\phi_1\cdot\Delta_g^n\phi_2\operatorname{dvol}_m\right|\le\kappa_{n,m}
C^n\cdot\operatorname{vol}_m(M,g),\\
&&\left|\beta_\ell^{\partial M}(\phi_1,\phi_2,\Delta_g,\BB^\pm)\right|\le\tilde\kappa_{\ell,m} C^\ell
\cdot\operatorname{vol}_{m-1}(\partial M,g)\,.
\end{eqnarray*}
\end{theorem}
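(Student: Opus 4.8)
The plan is to prove both estimates by the two-step scheme underlying Theorem~\ref{thm-1.1}: first reduce the integral in question to the volume of $M$ (resp.\ of $\partial M$) times the sup-norm of a \emph{universal} local density built from the jets of the geometric data, and then bound that density pointwise using real analyticity. The analytic input I would use throughout is the Gevrey-$1$ characterisation of real analyticity on a compact manifold: there are constants $A,\rho>0$, depending on $(M,g,\phi_1,\phi_2)$, such that for the Levi-Civita connection $\nabla$ of $g$ one has
\[
\|\nabla^k T\|_{C^0}\le A\,\rho^{-k}\,k!\qquad\text{for all }k\ge 0
\]
whenever $T$ is one of $g$, $\phi_1$, $\phi_2$, the curvature $R_g$, or the second fundamental form $L$ of the (real analytic) boundary. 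These uniform Cauchy-type estimates follow from compactness by complexifying $(M,g)$, extending $g,\phi_1,\phi_2$ holomorphically to a neighbourhood of uniform size, applying the Cauchy integral estimates there, and noting that the expansion of $\nabla^k$ in coordinate derivatives and Christoffel symbols has a number of terms bounded in terms of $k$ alone.

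For the interior term I would estimate $\bigl|\int_M\phi_1\cdot\Delta_g^n\phi_2\,\operatorname{dvol}_m\bigr|\le\|\phi_1\|_{C^0}\,\|\Delta_g^n\phi_2\|_{C^0}\,\operatorname{vol}_m(M,g)$, so the whole matter reduces to a bound $\|\Delta_g^n\phi_2\|_{C^0}\le\kappa_{n,m}C^n$ with $\kappa_{n,m}$ universal. This I would obtain from a scale-of-Banach-spaces argument: in each of finitely many complex coordinate charts, $\phi_2$ extends holomorphically to a polydisc of uniform radius $r$ with holomorphic extension bounded by a constant $A$, and the holomorphic extension of the Laplace-type operator $\Delta_g$ sends the sup-norm space of a polydisc of radius $\rho_1$ to that of a smaller polydisc of radius $\rho_2<\rho_1$ with operator norm at most $K/(\rho_1-\rho_2)^2$, where $K$ is controlled by the holomorphic extension of $g$ (this is the Cauchy estimate for second derivatives together with the lower-order terms of $\Delta_g$). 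Iterating $n$ times, shrinking the polydisc radius from $r$ to $r/2$ in $n$ equal steps, yields $\|\Delta_g^n\phi_2\|_{C^0(M)}\le(4Kn^2/r^2)^n\cdot A$; since $n^{2n}\le e^{2n}(2n)!$, one may take $\kappa_{n,m}:=(2n)!$ and absorb $A$, $K$, $r$ and $\|\phi_1\|_{C^0}$ into $C$ (for $n\ge 1$; the case $n=0$ is a fixed finite quantity).

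For the boundary term I would invoke the local structure of $\beta_\ell^{\partial M}$ established in Section~\ref{sect-2} (the heat-content counterpart of the invariance theory used for the heat trace; compare also the explicit formulae in the references cited above): for $\BB\in\{\BB^-,\BB^+\}$ one has $\beta_\ell^{\partial M}(\phi_1,\phi_2,\Delta_g,\BB)=\int_{\partial M}\mathcal{E}_\ell\,\operatorname{dvol}_{m-1}$, where $\mathcal{E}_\ell$ is a fixed linear combination — with coefficients depending only on $\ell$, $m$ and $\BB$ — of a universally bounded number $N_{\ell,m}$ of invariant monomials, each a complete contraction of covariant derivatives of $R_g$, $L$, $\phi_1$ and $\phi_2$ (with exactly one $\phi_1$- and one $\phi_2$-factor, by the bilinearity of $\beta$ in $(\phi_1,\phi_2)$), of total weight $\ell$ when $R_g$ is assigned weight $2$, $L$ and each $\nabla$ weight $1$, and $\phi_1,\phi_2$ weight $0$. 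The weight constraint forces the orders of differentiation in a monomial to sum to at most $\ell$, so, using the Gevrey-$1$ bounds above together with $\|g^{-1}\|_{C^0}$ to handle the index contractions, a monomial whose factors carry derivative orders $k_1,\dots,k_p$ is at most $(\mathrm{const})^\ell\prod_s k_s!\le(\mathrm{const})^\ell\,\ell!$. Summing over the $N_{\ell,m}$ monomials gives $\|\mathcal{E}_\ell\|_{C^0}\le\tilde\kappa_{\ell,m}C^\ell$ with $\tilde\kappa_{\ell,m}:=N_{\ell,m}\,\ell!$ universal, and integrating over $\partial M$ gives the stated bound; one finally takes $C$ to be the larger of the constants produced in the two parts (and at least $1$).

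I expect the one genuinely substantive ingredient to be the local structure of $\beta_\ell^{\partial M}$ with its two decisive features — a number of monomials bounded by a \emph{universal} $N_{\ell,m}$, and each monomial of \emph{exact} weight $\ell$ — since it is precisely this that turns the problem into the routine analytic estimates above; this is what the invariance-theoretic analysis of Section~\ref{sect-2} is designed to supply. The remaining points are standard but need some care: converting coordinate Cauchy estimates into covariant ones, and, in the interior case, arranging the iteration of $\Delta_g$ so that the factorial growth is carried entirely by the universal factor $(2n)!$ while the geometry enters only through $C^n$.
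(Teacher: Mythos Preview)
Your proposal is correct and shares the paper's basic philosophy (bound a local density pointwise, then feed in Cauchy estimates from real analyticity), but the execution differs. The paper does not treat the two estimates separately: it proves a single general lemma in Section~\ref{sect-2} stating that \emph{any} local invariant homogeneous of a given degree in the coordinate jets of $(g,\phi_1,\phi_2)$ satisfies a bound of the required form, by expanding the invariant in geodesic (resp.\ collared geodesic) coordinates as a finite sum $\sum e_{\vec\alpha,\vec\beta}(g_{ij})\prod_s\partial_x^{\alpha_s}g_{i_sj_s}\prod_t\partial_x^{\beta_t}\phi_{k_t}$, bounding the smooth coefficients $e_{\vec\alpha,\vec\beta}$ uniformly on a compact neighbourhood of $\delta_{ij}$, and applying the coordinate Cauchy estimate $|\partial_x^\alpha f|\le C^{|\alpha|}|\alpha|!$. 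Both parts of Theorem~\ref{thm-1.5} then drop out as special cases, since $\phi_1\Delta_g^n\phi_2$ is such an invariant of degree $2n$ and $\beta_\ell^{\partial M}$ is the boundary integral of one of degree $\ell$. Your interior argument instead iterates $\Delta_g$ on a scale of holomorphic sup-norm spaces (an Ovsyannikov-type bound), which is a legitimate and arguably cleaner route for that specific invariant, though it does not generalise as readily; your boundary argument is essentially the paper's, except that you work with covariant rather than coordinate derivatives and so incur the extra conversion step you flag, which the paper sidesteps by staying in coordinates throughout. One small correction: Section~\ref{sect-2} does not \emph{supply} the local structure of $\beta_\ell^{\partial M}$ --- it takes that structure as input from the heat-content literature (cf.\ the references cited in the introduction and the formalism of Section~\ref{sect-5}) and only contributes the analytic bound.
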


\begin{remark}\rm Again, the constant $C$ can be chosen so that $$C(M,c^2g)=c^{-2}C(M,g)\,.$$
\end{remark}

\subsection{The heat content asymptotics in the smooth setting}
Theorem~\ref{thm-1.5} fails in the smooth setting as we have:

\goodbreak\begin{theorem}\label{thm-1.6}
Let $k\ge3$ be given, let constants $C_{\bar\ell}>0$ for $\bar\ell\ge k$ be given,
and let $\epsilon>0$ be given. 
Let $\BB=\BB^+$ or $\BB=\BB^-$. Let $(M,g)$ be a smooth compact Riemannian
manifold of dimension $m\ge1$ with non-trivial boundary. Let $\phi_1$ be a smooth initial temperature
 and let $\phi_2$ be a smooth specific heat with $\BB\phi_2\ne0$. There exists $\Phi_1$ with
$||\phi_1-\Phi_1||_{2k-1}<\varepsilon$ such that:
$$\beta_{2\bar\ell}^{\partial M}(\Phi_1,\phi_2,\Delta_g,\BB)= C_{\bar\ell}\quad\text{for any}\quad\bar\ell\ge k\,.$$
\end{theorem}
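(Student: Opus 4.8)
The idea is that $\beta_\ell^{\partial M}(\cdot,\phi_2,\Delta_g,\BB)$ is a \emph{linear} functional of $\phi_1$ produced by a universal local formula on $\partial M$, whose dependence on the highest normal derivative of $\phi_1$ that it involves is non-degenerate; so one can prescribe the high-order normal jet of $\Phi_1$ along $\partial M$ recursively in $\bar\ell$, leaving the low-order jet --- and hence the $C^{2k-1}$ behaviour --- essentially unchanged. I will describe the case $\BB=\BB^-$ in detail; the case $\BB=\BB^+$ is entirely analogous, with the role of $\partial_\nu^{2\bar\ell}\phi_1$ played by $\partial_\nu^{2\bar\ell-1}\phi_1$.

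The first step is to record the structure of the coefficients. By the Seeley--Greiner calculus, as in Section~\ref{sect-2}, together with the known form of the heat content boundary coefficients, one may write $\beta_{2\bar\ell}^{\partial M}(\Phi_1,\phi_2,\Delta_g,\BB^-)=\int_{\partial M}\sum_{j=0}^{2\bar\ell}(\partial_\nu^j\Phi_1)\,E_{\bar\ell,j}\operatorname{dvol}_{m-1}$, where the $E_{\bar\ell,j}$ are universal polynomials in the jets of $\phi_2$, of the curvature of $g$, and of the second fundamental form of $\partial M$, tangential derivatives that would land on $\Phi_1$ being integrated by parts on the closed manifold $\partial M$. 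A homogeneity count shows that the top order is exactly $2\bar\ell$ and that $E_{\bar\ell,2\bar\ell}=\gamma_{\bar\ell}\,\phi_2|_{\partial M}$ for a universal constant $\gamma_{\bar\ell}$. The only substantive point is that $\gamma_{\bar\ell}\ne0$: one checks this by evaluating the universal formula on the flat model $\R^{m-1}\times[0,\infty)$, where separation of variables reduces $\beta_{2\bar\ell}^{\partial M}$ to a one-dimensional heat content integral that is computed explicitly from the Gaussian kernel, the coefficient of $\partial_\nu^{2\bar\ell}\phi_1$ coming out as a strictly positive Gaussian moment. Since $\BB^-\phi_2=\phi_2|_{\partial M}\not\equiv0$, the function $E_{\bar\ell,2\bar\ell}$ is not identically zero on $\partial M$.

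Next one sets up a triangular linear system. Seek $\Phi_1=\phi_1+\Psi$ with $\Psi$ vanishing to order $2k$ along $\partial M$, that is $\partial_\nu^j\Psi|_{\partial M}=0$ for $j<2k$ and $\partial_\nu^j\Psi|_{\partial M}=w_j\in C^\infty(\partial M)$ for $j\ge2k$, and impose $w_j=0$ for odd $j$. Substituting into the conditions $\beta_{2\bar\ell}^{\partial M}(\Phi_1)=C_{\bar\ell}$ and using the structure above, the $\bar\ell$-th condition ($\bar\ell\ge k$) becomes the single scalar equation $\gamma_{\bar\ell}\int_{\partial M}w_{2\bar\ell}\,\phi_2\operatorname{dvol}_{m-1}=C_{\bar\ell}-\beta_{2\bar\ell}^{\partial M}(\phi_1,\phi_2,\Delta_g,\BB^-)-R_{\bar\ell}$, where $R_{\bar\ell}$ is the contribution of $w_{2k},w_{2k+2},\dots,w_{2\bar\ell-2}$ already fixed at earlier steps (these are the only $w_j$ with $j<2\bar\ell$ that $\beta_{2\bar\ell}^{\partial M}$ can see). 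Because $\gamma_{\bar\ell}\ne0$ and $\phi_2|_{\partial M}\not\equiv0$, the linear functional $w\mapsto\int_{\partial M}w\,\phi_2\operatorname{dvol}_{m-1}$ is onto $\R$, so one may solve for $w_{2\bar\ell}$ --- for instance taking it proportional to $\phi_2|_{\partial M}$. Induction on $\bar\ell\ge k$ determines all the $w_j$.

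Finally, by Borel's lemma there is a fixed $\widetilde\Psi\in C^\infty(M)$ with $\partial_\nu^j\widetilde\Psi|_{\partial M}=w_j$ for $j\ge2k$ and $=0$ for $j<2k$. Let $r$ denote the distance to $\partial M$, let $\chi$ be a cutoff equal to $1$ near $0$ and supported in $[0,1)$, and put $\Phi_1:=\phi_1+\chi(r/\delta)\widetilde\Psi$. For every sufficiently small $\delta>0$ the normal jet of $\Phi_1$ along $\partial M$ is exactly the prescribed one, so $\beta_{2\bar\ell}^{\partial M}(\Phi_1,\phi_2,\Delta_g,\BB^-)=C_{\bar\ell}$ for all $\bar\ell\ge k$; and since $\widetilde\Psi$ vanishes to order $2k$ along $\partial M$, on the collar $\{r<\delta\}$ one has $|\nabla^{h,i}(\chi(r/\delta)\widetilde\Psi)|\le C\delta^{2k-i}$ for $0\le i\le2k-1$, whence $\|\Phi_1-\phi_1\|_{2k-1}=O(\delta)$ and a small $\delta$ gives $\|\phi_1-\Phi_1\|_{2k-1}<\epsilon$. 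The main obstacle throughout is the first step: identifying the exact order of the top normal derivative of $\phi_1$ in $\beta_{2\bar\ell}^{\partial M}$ (which is one lower for Neumann than for Dirichlet) and, above all, verifying that it occurs with a coefficient that is not identically zero when $\BB\phi_2\not\equiv0$; once that non-degeneracy is secured, the recursive solve and the cutoff/Borel construction are routine.
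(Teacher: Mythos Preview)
Your argument is correct and follows essentially the same route as the paper: isolate the top normal derivative of $\phi_1$ in $\beta_{2\bar\ell}^{\partial M}$, use the non-vanishing of its coefficient (which is $\Xi_{2\bar\ell}\phi_2|_{\partial M}$ for Dirichlet and $-\Xi_{2\bar\ell}\phi_2^{(1)}|_{\partial M}$ for Neumann, exactly the content of Theorem~\ref{thm-1.9}) to solve a triangular system for the high-order normal jets, and then realize those jets by a Borel-type construction with small $C^{2k-1}$ norm (Lemma~\ref{lem-3.1} in the paper). The only cosmetic differences are that the paper establishes the leading-term non-degeneracy via the functorial methods of Section~\ref{sect-5} rather than a flat-model check, and packages the jet realization as a lemma rather than via an explicit cutoff--scaling estimate.
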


The heat content asymptotics were originally studied for Dirichlet
boundary conditions and for $\phi_1=\phi_2=1$
\cite{BD89,BG94,BS90}. We have the following theorem in this
setting:

\goodbreak\begin{theorem}\label{thm-1.7}
Let $k\ge3$ be given, let constants $C_{\bar\ell}>0$ for $\bar\ell\ge k$ be given,
and let $\epsilon>0$ be given. Let $(M,g)$ be a smooth compact
manifold Riemannian manifold of dimension $m\ge2$ with non-trivial boundary. There exists a metric $g_1$ so
$||g-g_1||_{2k-1}<\varepsilon$ such that 
$$\beta_{2\bar\ell}^{\partial M}(1,1,\Delta_{g_1},\BB^-)=C_{\bar\ell}\quad\text{for any}\quad\bar\ell\ge k\,.$$
\end{theorem}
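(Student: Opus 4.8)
The plan is to reduce Theorem~\ref{thm-1.7} to Theorem~\ref{thm-1.6} by a standard observation: for Dirichlet boundary conditions and $\phi_1=\phi_2=1$, the boundary heat content coefficients $\beta_{2\bar\ell}^{\partial M}(1,1,\Delta_{g_1},\BB^-)$ are given by local formulas that are polynomial in the second fundamental form $L$ of $\partial M$, the Riemann curvature of $g_1$ near $\partial M$, and their tangential and normal covariant derivatives. The essential point is that a normal perturbation of the metric near the boundary — equivalently, a perturbation of the boundary profile in Fermi coordinates — feeds directly into these invariants through the normal derivatives of the metric, and there is one such invariant (for each $\bar\ell$) whose ``leading'' dependence on a high normal derivative of the perturbation is linear and not cancelled by the lower-order terms.

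First I would fix Fermi (geodesic normal) coordinates $(x,y)$ with $x\in[0,\delta)$ the geodesic distance to $\partial M$ and $y$ coordinates on $\partial M$, so that $g=dx^2+g_{ab}(x,y)\,dy^a dy^b$ near the boundary. I would then look for a perturbation of the form $g_1=dx^2+e^{2\psi(x,y)}g_{ab}\,dy^a dy^b$, or more simply a perturbation supported in a collar where only the ``radial'' profile is changed, mirroring the conformal perturbation used in Theorem~\ref{thm-1.4} but now done directly on the metric rather than via an ambient conformal factor. The second step is to isolate, from the local formula for $\beta_{2\bar\ell}^{\partial M}$, the unique monomial that is linear in the top-order normal derivative $\partial_x^{2\bar\ell-1}\psi|_{x=0}$ (or the appropriate jet of the boundary function); all the remaining terms in $\beta_{2\bar\ell}^{\partial M}$ involve only lower-order jets and are therefore controlled. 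This is exactly the mechanism already exploited in the proof of Theorem~\ref{thm-1.6}, so I would invoke that theorem's internal machinery: the map $\psi\mapsto(\beta_{2k}^{\partial M},\beta_{2k+2}^{\partial M},\dots)$, restricted to perturbations whose jets vanish to order $2k-2$ at $\partial M$, is, after triangularization, solvable for any prescribed sequence $(C_{\bar\ell})_{\bar\ell\ge k}$.

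The third step is the inductive/recursive solution of the resulting triangular system. One chooses the jets of $\psi$ at $\partial M$ one order at a time: given the values already fixed (which determine all lower $\beta$'s up to a known correction), the next jet is chosen to hit the next prescribed constant, using that its coefficient in $\beta_{2\bar\ell}^{\partial M}$ is a nonzero universal number. A Borel-summation / Whitney extension argument then produces a genuine smooth function $\psi$ with the prescribed infinite jet at $\partial M$, supported in an arbitrarily small collar, and with $\|\psi\|_{2k-1}$ (hence $\|g-g_1\|_{2k-1}$) as small as desired — the smallness coming from the freedom to make the collar thin and the low-order jets small, since the constraint only pins down jets of order $\ge 2k-2$. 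One must also check $g_1$ remains a metric, which holds for $\psi$ small in $C^0$.

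The main obstacle is the second step: verifying that for each $\bar\ell\ge k\ge3$ the coefficient of the top normal jet of the perturbation in $\beta_{2\bar\ell}^{\partial M}(1,1,\Delta_{g_1},\BB^-)$ is genuinely nonzero and is not cancelled by contributions routed through lower-order geometric data. For the specific case $\phi_1=\phi_2=1$ with Dirichlet conditions this requires knowing enough of the structure of the local invariants $\beta_\ell^{\partial M}$ — in particular that the ``pure boundary-profile'' part behaves like the corresponding term in the flat half-space computation of Trav\v{e}nec and \v{S}amaj, which exhibits the needed non-degeneracy when $m$ is even and, via the dimension-reduction $M=[0,2\pi]\times N$ used in Theorem~\ref{thm-1.4}, can be arranged in general. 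I expect this to be handled exactly as in Theorem~\ref{thm-1.6}, whose proof already supplies the non-vanishing of the relevant universal constant; the only new verification is that replacing a perturbation of $\phi_1$ by a perturbation of $g$ does not destroy that non-vanishing, which follows because a metric perturbation in the collar changes $\Delta_{g_1}$ and hence the same leading coefficient in a parallel way. \qedbox
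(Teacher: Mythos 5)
Your overall architecture matches the paper's: perturb the metric in a collar so as to prescribe high normal jets at $\partial M$, exploit the triangular (leading-term plus lower-order) structure of the local formula for $\beta_{2\bar\ell}^{\partial M}$, solve recursively for the jets, and realize the resulting infinite jet by a smooth metric with small $C^{2k-1}$ norm (the paper uses Lemma~\ref{lem-3.1} for this last step, you invoke Borel/Whitney --- same idea). So the strategy is correct. However, the step you flag as ``the main obstacle'' is precisely the one the paper resolves concretely, and your proposal leaves it as an expectation rather than a proof. What actually carries the top-order normal jet of the metric, linearly and with a universal nonzero coefficient, is the term $r_\ell^-\,\phi_1\phi_2\,\rho_{mm}^{(\ell-2)}$ in Theorem~\ref{thm-1.9}(1), with $r_\ell^- = \tfrac12(\ell-2)\Xi_\ell$. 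Since $\Xi_\ell$ is defined by a strictly nonzero recursion, $r_{2\bar\ell}^- \neq 0$ for all $\bar\ell \ge 2$, and $\rho_{mm}^{(2\bar\ell-2)}$ on $\partial M$ depends linearly on the $(2\bar\ell)$-th normal jet of the metric in the collar (so the highest jet is order $2\bar\ell$, not $2\bar\ell-1$ as you wrote). That is the whole content of the step: the paper's proof of Theorem~\ref{thm-1.7} is a one-line adaptation of the Theorem~\ref{thm-1.6} proof once $\rho_{mm}^{(\ell-2)}$ is identified as the lever.

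Two points in your write-up could mislead. First, the non-vanishing is \emph{not} ``the same leading coefficient'' as in Theorem~\ref{thm-1.6}: there the lever is $\Xi_\ell\,\phi_1^{(\ell)}\phi_2$ (or, for Neumann, $-\Xi_\ell\,\phi_1^{(\ell-1)}\phi_2^{(1)}$), which is identically zero here because $\phi_1\equiv\phi_2\equiv 1$; one genuinely needs the distinct curvature term with its distinct coefficient $r_\ell^-$, and the last sentence of your proposal is too loose on this. Second, the appeal to the Trav\v{e}nec--\v{S}amaj even/odd $m$ dichotomy and to the dimension reduction of Theorem~\ref{thm-1.4} is a red herring: $r_\ell^-$ is a universal constant independent of $m$ (Theorem~\ref{thm-1.9} and Lemma~\ref{lem-5.4}), so no dimensional parity issue arises and no product-manifold reduction is needed. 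Once you replace the hedged ``I expect this to be handled'' with the explicit use of $r_\ell^- = \tfrac12(\ell-2)\Xi_\ell\neq 0$, the rest of your recursion-and-extension argument goes through and is essentially the paper's proof.
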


\subsection{Bochner formalism for operators of Laplace type}\label{sect-1.8}
The results given above in Theorem~\ref{thm-1.3}, in Theorem~\ref{thm-1.6}, and in
Theorem~\ref{thm-1.7} rely upon a leading term analysis of the heat trace asymptotics and of the heat content asymptotics. It
is one of the paradoxes of this subject that to apply the functorial method, one must work with very general operators even
if one is only interested in the scalar Laplacian, as is the case in this
paper. We only consider the context of scalar operators. There is
a corresponding notion for systems, i.e. operators which act on
the space of smooth sections to some vector bundle. It is possible to express an operator $D$ of Laplace type as given
in Equation (\ref{eqn-1.b}) invariantly using a Bochner formalism \cite{G94}. There exists a unique connection $\nabla$
and a unique smooth function $E$ so that
$$D\phi=-(g^{uv}\phi_{;uv}+E\phi)\, ,$$
where we use `;' to denote the components of multiple covariant differentiation with respect to $\nabla$ and with respect to
the Levi-Civita connection. Let
$\Gamma_{uv}{}^w$ be the Christoffel symbols of the Levi-Civita connection and let $\omega$ be the connection $1$-form of
$\nabla$. We then have
\begin{equation}\label{eqn-1.e}
\begin{array}{l}
\omega_u=\textstyle\frac12g_{uv}(A^v+g^{sw}\Gamma_{sw}{}^v
\operatorname{Id}),\\
E=B-g^{uv}(\partial_{x_u}\omega_{v}+\omega_{u}\omega_{v}
    -\omega_{w}\Gamma_{uv}{}^w)\,.\vphantom{\vrule height 11pt}
\end{array}\end{equation}

\subsection{Leading term analysis}\label{sect-1.9}
 Theorem~\ref{thm-1.8} below will play a central role in our
analysis, and was established in \cite{BGO90,G79,G88}. We also refer
to related work in the $2$-dimensional setting \cite{OPS88}. It
has been used by Brooks, Perry, Yang \cite{BPY89} and by Chang and
Yang \cite{CY89} to show families of isospectral metrics within a
conformal class are compact modulo gauge equivalence in dimension
3. Let $\tau$ be the
scalar curvature of $g$, let $\rho$ be the Ricci tensor of $g$, and let $\Omega$ be the curvature
of the connection $\nabla$ defined by an operator of Laplace type.
\goodbreak\begin{theorem}\label{thm-1.8}
Let $D$ be an operator of Laplace type on a closed Riemannian manifold $(M,g)$ and let $\bar n\ge3$.
\begin{enumerate}
\item The local heat trace asymptotics satisfy:
\begin{eqnarray*}
a_{2\bar n}(P,\Delta_g)&=&\frac{(-1)^{\bar n}\bar n!}{(2\bar n+1)!}\{-\bar n
\Delta^{\bar n-1}\tau-(4n+2)\Delta^{\bar n-1}E\}\\&+&\text{lower
order derivative terms}\,.
\end{eqnarray*}
\item The global heat trace asymptotics satisfy:
\begin{eqnarray*}
a_{2\bar n}(D)&=&\frac12\frac{(-1)^{\bar n}\bar n!}{(2\bar n+1)!}\int_M\{(\bar n^2-\bar
n-1)|\nabla^{\bar n-2}\tau|^2
+2|\nabla^{\bar n-2}\rho|^2\\
&+&4(2\bar n+1)(\bar n-1)\nabla^{(\bar n-2)}\tau\cdot\nabla^{(\bar
n-2)}E+2(2\bar n+1)|\nabla^{(\bar n-2)}\Omega|^2\\
&+&4(2\bar n-1)(2\bar n+1)|\nabla^{\bar n-2}E|^2+\text{lower order
terms}\left.\vphantom{\nabla^{\bar n-2}\tau}\right\}
\operatorname{dvol}_m\,.
\end{eqnarray*}\end{enumerate}\end{theorem}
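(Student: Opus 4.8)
The plan, following \cite{G79,G88,BGO90}, is to pin down the leading-order structure of $a_{2\bar n}$ in two stages: first reduce, by invariance theory, to a short list of universal constants, and then compute those constants from the recursion that generates the heat-kernel expansion. By the local heat-equation calculus of Seeley and Greiner, $a_{2\bar n}(x,D)$ is, in geodesic coordinates centred at $x$ and a synchronous frame for the bundle, a polynomial in the $2\bar n$-jet of the symbol of $D$, homogeneous of weight $2\bar n$ under the rescaling $g\mapsto c^2g$; by H.~Weyl's theorem on invariants it is a universal linear combination — with coefficients depending only on $m$ and $\bar n$ — of completely contracted monomials in the curvature tensor, the Ricci tensor $\rho$, the scalar curvature $\tau$, the Bochner endomorphism $E$, the bundle curvature $\Omega$, and their iterated covariant derivatives. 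I would grade these monomials by the pair (number of curvature/$E$/$\Omega$ factors, total number of covariant derivatives): a $k$-factor monomial of weight $2\bar n$ carries exactly $2\bar n-2k$ covariant derivatives, so the most differentiated part has $k=1$ and $2\bar n-2$ derivatives. Since the only weight-two scalars linear in the jet are $\tau$ and $E$ — the endomorphism trace of $\Omega$ vanishes for the scalar Laplacian, and the contracted Bianchi identity collapses every fully traced iterated derivative of the curvature tensor to an iterated derivative of $\tau$ — this $k=1$ part must be $\alpha_{\bar n}\Delta^{\bar n-1}\tau+\beta_{\bar n}\Delta^{\bar n-1}E$ with $\alpha_{\bar n},\beta_{\bar n}$ universal; everything else is of strictly lower derivative order. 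That is the shape of assertion (1).

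Because $\Delta^{\bar n-1}\tau$ and $\Delta^{\bar n-1}E$ are exact divergences, their coefficients cannot be detected from any integrated identity and must be extracted pointwise. I would run the Seeley recursion for the symbol of the resolvent $(D-\lambda)^{-1}$, retaining at each stage only the contributions in which successive differentiations fall on the same curvature factor — these are what feed the $k=1$ term — and then integrate in $\lambda$ and in the fibre variable $\xi$ to obtain $\alpha_{\bar n}$ and $\beta_{\bar n}$ together with their $\bar n$-dependence. The constant $\beta_{\bar n}$ is already visible in one dimension: for $D=-\partial_x^2+V$ one has $\tau=0$, $E=-V$, and the local heat densities are the Gel'fand--Dikii polynomials, whose $\bar n$-th member has leading term $\partial_x^{2\bar n-2}V$ times a rational constant fixed by the Gel'fand--Dikii recursion; this gives $\beta_{\bar n}=-(4\bar n+2)\,\tfrac{(-1)^{\bar n}\bar n!}{(2\bar n+1)!}$, and the curved analogue of the same recursion, now following the $\Delta^{\bar n-1}\tau$ channel, gives $\alpha_{\bar n}=-\bar n\,\tfrac{(-1)^{\bar n}\bar n!}{(2\bar n+1)!}$. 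The known values of $a_0,a_2,a_4$ begin the recursion and serve as a cross-check.

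For assertion (2) I would integrate assertion (1) together with the $k=2$ part of $a_{2\bar n}(x,D)$ over $M$. The $k=1$ contribution integrates to zero, so the leading term of $a_{2\bar n}(D)$ comes from the $k=2$ monomials, each of which carries $2\bar n-4$ derivatives. Integrating by parts, with the commutator errors — which introduce a third curvature factor and are hence lower order — absorbed into the remainder, rewrites each such monomial as a bilinear form in $\nabla^{\bar n-2}$ applied to two curvatures; applying the second Bianchi identity to the top-derivative part reduces the bilinears built from the full curvature tensor to combinations of $|\nabla^{\bar n-2}\rho|^2$ and $|\nabla^{\bar n-2}\tau|^2$, which is why no independent Riemann term survives. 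The six resulting coefficients, and the overall factor $\tfrac12$ produced by the symmetrization in the integration by parts, are then read off from the same resolvent recursion carried to two curvature factors, with the lower-order terms and the normalization fixed as before by $a_0,a_2,a_4$ and by the functorial relations — the variation $D\mapsto D-\epsilon F$, the conformal variation $g\mapsto e^{2\epsilon f}g$, and the product formula on $M_1\times M_2$.

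The hard part is twofold. First, ``lower order'' must be made uniform in $\bar n$: the bigrading has to survive integration by parts and the Bianchi reductions so that the asserted remainder is genuinely subleading for \emph{every} $\bar n\ge3$, not merely asymptotically, and in particular one must rule out any accidental identity that would merge a lower-$\bar n$ monomial into the leading type. Second, one must extract the exact $\bar n$-dependence $\tfrac{(-1)^{\bar n}\bar n!}{(2\bar n+1)!}$ rather than only the list of invariants that occur; this requires carrying every combinatorial factor faithfully through the Gel'fand--Dikii recursion and its curved counterpart, which is precisely the quantitative input on which the later applications in this paper depend.
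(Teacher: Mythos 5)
This theorem is not proved in the paper at all: the authors state explicitly that it ``was established in \cite{BGO90,G79,G88}'' and use it as a black box, so there is no in-paper argument against which to measure your proposal. Judged on its own terms, your structural skeleton is correct and standard: Weyl's invariance theorem together with the bigrading by (number of curvature/$E$/$\Omega$ factors, number of covariant derivatives) does reduce the one-factor part to $\alpha_{\bar n}\Delta^{\bar n-1}\tau+\beta_{\bar n}\Delta^{\bar n-1}E$, and integration by parts plus the second Bianchi identity does collapse the two-factor integrands to the quadratic forms in $\nabla^{\bar n-2}\tau$, $\nabla^{\bar n-2}\rho$, $\nabla^{\bar n-2}E$, $\nabla^{\bar n-2}\Omega$.

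Where you diverge from \cite{BGO90,G79,G88}, and where your proposal has a real gap, is in the mechanism for extracting the exact $\bar n$-dependence $\tfrac{(-1)^{\bar n}\bar n!}{(2\bar n+1)!}$. You propose to read it off by ``carrying every combinatorial factor faithfully'' through the Seeley resolvent recursion and, in one dimension, the Gel'fand--Dikii recursion, while relegating the functorial relations to a cross-check. In fact the opposite is true: the symbol recursion is combinatorially unmanageable at general order, and the cited papers determine the leading coefficients entirely by functorial identities --- the shift $E\mapsto E+\epsilon$ (equivalently $D\mapsto D-\epsilon$, which yields $a_{2\bar n}(D-\epsilon)=\sum_j\tfrac{\epsilon^j}{j!}a_{2\bar n-2j}(D)$), the conformal variation $g\mapsto e^{2\epsilon f}g$ of the \emph{integrated} invariant, evaluation on product manifolds, and the known values of $a_0,a_2,a_4$ --- which together produce closed recursions for $\alpha_{\bar n}$, $\beta_{\bar n}$, and the six global constants in one stroke. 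Your paragraph ends by naming exactly this as ``the hard part'' without resolving it; as written, the proposal identifies the target of the computation but does not supply a recursion that demonstrably closes for all $\bar n\ge3$, which is precisely the content of the cited results. A corrected proof should make the functorial identities the primary engine rather than a normalization check, and should verify (as \cite{BGO90} does) that no lower-order monomial can recombine with a leading one after the integrations by parts.
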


In this paper, we will establish a corresponding leading term analysis for the heat content asymptotics. We shall always
assume
$\ell$ is even; thus the lack of symmetry in the way we have written the interior contributions plays no role. Let
$\nabla$ be the connection defined by $D$ as discussed in Section~\ref{sect-1.8}. Let $D^*$ be the formal adjoint of
$D$; the associated connection $\nabla^*$ defined by $D^*$ is then the connection dual to $\nabla$ defined by the
relation
$$\nabla\phi_1\cdot\phi_2+\phi_1\cdot\nabla^*\phi_2=d(\phi_1\cdot\phi_2)\,.$$
Let 
$$\phi_1^{(\ell)}:=\nabla_\nu^\ell\phi_1|_{\partial M}\quad\text{and}\quad
\phi_2^{(\ell)}:=(\nabla_\nu^*)^\ell\phi_2|_{\partial M}$$
be the normal covariant derivatives of order $\ell$. By using the inward geodesic flow, we can
always choose coordinates $(y,r)$ near the boundary so that $\partial_r=\nu$; consequently
$$\phi^{(\ell)}=\partial_r^\ell\phi|_{\partial M}\quad\text{if}\quad D=\Delta_g\,.$$
Let
$S$ be a smooth function on the boundary. The Robin boundary operator in this more general setting is defined by the identity:
$$\BB_S^+\phi:=(\phi^{(1)}+S\phi)|_{\partial M}\,.$$
Let $\rho_{mm}^{(\ell)}:=R_{amma;m...m}$ be the
$\ell^{\operatorname{th}}$ covariant derivative of $\rho_{mm}$ restricted to
$\partial M$. Define $\Xi_\ell$ recursively for $\ell$ even by setting:
$$
\textstyle\Xi_2=-2\pi^{-1/2}\frac23\quad\text{and}
\quad\Xi_\ell=\frac2{\ell+1}\Xi_{\ell-2}\quad\text{if}\quad\ell\ge4\,.
$$
\goodbreak\begin{theorem}\label{thm-1.9}
Let $\ell\ge6$ be even. Modulo lower order terms we have:
\begin{enumerate}
\item $\displaystyle\beta_\ell^{\partial M}(\phi_1,\phi_2,D,\mathcal{B}^-)
   =\int_{\partial
M}\left\{\Xi_\ell(\phi_1^{(\ell)}\phi_2+\phi_1\phi_2^{(\ell)})+\ell\cdot\Xi_\ell\phi_1\phi_2E^{(\ell-2)}\right.$
\medbreak
$+0\cdot(\phi_1^{(\ell-1)}\phi_2^{(1)}+\phi_1^{(1)}\phi_2^{(\ell-1)})
+(\ell-2)\Xi_\ell(\phi_1^{(1)}\phi_2+\phi_1\phi_2^{(1)})E^{(\ell-3)}$
\medbreak$+
0\cdot\phi_1^{(1)}\phi_2^{(1)}E^{(\ell-4)}+\frac12(\ell-2)\Xi_\ell\phi_1\phi_2\rho_{mm}^{(\ell-2)}+...\left.\vphantom{\phi_1^{(\ell}}\right\}
\operatorname{dvol}_{m-1}$.
\smallbreak\item
$\displaystyle\beta_\ell^{\partial M}(\phi_1,\phi_2,D,\mathcal{B}_S^+)
   =\int_{\partial M}\left\{0(\phi_1^{(\ell)}\phi_2+\phi_1\phi_2^{(\ell)})+0\cdot\phi_1\phi_2E^{(\ell-2)}\right.$
\medbreak
$-\Xi_\ell(\phi_1^{(\ell-1)}\phi_2^{(1)}+\phi_1^{(1)}\phi_2^{(\ell-1)})
-\Xi_\ell(\phi_1^{(1)}\phi_2+\phi_1\phi_2^{(1)})E^{(\ell-3)}$
\medbreak
$+
(2-\ell)\Xi_\ell\phi_1^{(1)}\phi_2^{(1)}E^{(\ell-4)}-\Xi_\ell S(\phi_1^{(\ell-1)}\phi_2+\phi_1\phi_2^{(\ell-1)})$
\medbreak
$-\Xi_\ell S(\phi_1^{(\ell-2)}\phi_2^{(1)}+\phi_1^{(1)}\phi_2^{(\ell-2)})
-2\cdot\Xi_\ell S(\phi_1\phi_2^{(1)}+\phi_1^{(1)}\phi_2)E^{(\ell-4)}$
\medbreak
$+0\cdot\phi_1\phi_2\rho_{mm}^{(\ell-2)}+...\left.\vphantom{\phi_1^{(\ell)}}\right\}\operatorname{dvol}_{m-1}$.
\end{enumerate}\end{theorem}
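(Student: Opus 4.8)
The plan is to combine the invariance theory underlying the Seeley--Greiner calculus with a direct inner (boundary-layer) analysis of the heat equation near $\partial M$, and then to fix the finitely many undetermined universal constants by evaluating on a short list of model geometries; this parallels the proof of Theorem~\ref{thm-1.8} given in \cite{BGO90,G79,G88}. First I would record the structural form of the coefficients. By locality and by H.\ Weyl's theorem on invariants, $\beta_\ell^{\partial M}(\phi_1,\phi_2,D,\BB)$ is the integral over $\partial M$ of a universal polynomial in the jets (with respect to $\nabla$, $\nabla^*$, and the Levi--Civita connection) of $\phi_1$, $\phi_2$, $E$, the curvature $\Omega$ of $\nabla$, the curvature of $g$, the second fundamental form, and --- in the Robin case --- $S$, homogeneous of weight $\ell$ under the rescaling $g\mapsto c^2g$, $\nabla_\nu\mapsto c^{-1}\nabla_\nu$. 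Within this polynomial one isolates the \emph{leading} monomials: those in which every derivative is a normal derivative, no second fundamental form or purely tangential curvature appears, and $\phi_1$ and $\phi_2$ are each differentiated at most once --- unless one of them carries the single top-order derivative $\nabla_\nu^\ell$. These are precisely the monomials displayed in the Theorem; every remaining weight-$\ell$ monomial is absorbed into ``lower order''. The reduction that makes this legitimate is that a normal derivative of $\phi_1$ of order $\ge2$ may be traded, by means of $D\phi_1=-(\nabla^2\phi_1+E\phi_1)$, for data of strictly lower order (the resulting factor $D\phi_1$ carries, in effect, an extra power of $t$ and so shifts $\ell\mapsto\ell-2$), and symmetrically for $\phi_2$ using $D^*$; tangential derivatives and products of two or more curvature/potential factors are, by definition, of lower order. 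Thus the task is reduced to computing the coefficients $\Xi_\ell$, $\ell\,\Xi_\ell$, $0$, $(\ell-2)\Xi_\ell$, $\tfrac12(\ell-2)\Xi_\ell$ of assertion~(1) and their analogues in assertion~(2).

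Next I would carry out the inner expansion. Choose geodesic boundary coordinates $(y,r)$ with $\partial_r=\nu$, so that $g=dr^2+h_{ab}(y,r)\,dy^a\,dy^b$, and write the solution of the heat equation near $\partial M$ in the form $u(y,r,t)=\sum_{a\ge0}\frac1{a!}\,\phi_1^{(a)}(y)\,\Psi_a(y,r,t)+(\text{potential and curvature corrections})$, where to leading (flat, potential-free) order $\Psi_a(\cdot,r,t)=\Psi_a(r,t)$ solves the one-dimensional half-line heat equation with initial datum $r^a$ and the prescribed boundary condition. For $\BB^-$ one has the explicit formula $\Psi_a(r,t)=\int_0^\infty p_t^-(r,s)\,s^a\,ds$ with $p_t^-(r,s)=(4\pi t)^{-1/2}\bigl(e^{-(r-s)^2/4t}-e^{-(r+s)^2/4t}\bigr)$; this is a polynomial $P_a(r,t)$ in $(r,t)$ --- the free heat evolution of $r^a$ --- plus a part concentrated in the layer $r\sim\sqrt t$, which vanishes when $a$ is odd and equals a constant multiple of $t^{a/2}$ times a repeated integral of the complementary error function $\operatorname{erfc}(r/2\sqrt t)$ when $a$ is even. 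Since $\beta_\ell^{\partial M}$ is the coefficient of $t^{(\ell+1)/2}$ in $\int_M u\,\phi_2\,\operatorname{dvol}_m$, and for $\ell$ even the polynomial parts $P_a$ contribute only integer powers of $t$ (they resum to the interior part of the expansion), only the layer parts survive; they produce $\int_{\partial M}$ of a sum of monomials $\phi_1^{(a)}\phi_2^{(b)}$ whose coefficients are elementary moments such as $\int_0^\infty\xi^{b}\operatorname{erfc}(\xi)\,d\xi=\Gamma(\tfrac b2+1)/(\sqrt\pi(b+1))$. Reading off the top-order monomials gives $\Xi_\ell$ as an explicit ratio of Gamma functions, which one checks obeys $\Xi_\ell=\frac2{\ell+1}\Xi_{\ell-2}$ with $\Xi_2=-\tfrac4{3\sqrt\pi}$, and shows that the coefficient of $\phi_1^{(\ell-1)}\phi_2^{(1)}$ vanishes for $\BB^-$ (then $\Psi_{\ell-1}$ is a pure polynomial) while the coefficient of $\phi_1^{(\ell)}\phi_2$ does not. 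Repeating the computation with the Neumann kernel $p_t^+(r,s)=(4\pi t)^{-1/2}\bigl(e^{-(r-s)^2/4t}+e^{-(r+s)^2/4t}\bigr)$, and more generally with the explicit Robin kernel for $\BB_S^+$, reverses these vanishings (for $\BB_S^+$ the coefficient of $\phi_1^{(\ell)}\phi_2$ is $0$ and that of $\phi_1^{(\ell-1)}\phi_2^{(1)}$ is $-\Xi_\ell$) and, after expanding the Robin kernel in powers of $S$, yields the $S$-dependent terms of assertion~(2).

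For the potential terms $\phi_1\phi_2E^{(\ell-2)}$, $(\phi_1^{(1)}\phi_2+\phi_1\phi_2^{(1)})E^{(\ell-3)}$, and $\phi_1^{(1)}\phi_2^{(1)}E^{(\ell-4)}$ I would carry the potential into the one-dimensional model, taking $D=-(\partial_r^2+E(r))$, iterating Duhamel's principle once in $E$, expanding $E(r)=\sum_k\frac{r^k}{k!}E^{(k)}(y,0)$ in the stretched variable and matching powers of $t$; the known low-order coefficients $\beta_2$, $\beta_4$, $\beta_5$ from the literature then normalize the result, producing $\ell\,\Xi_\ell$, $(\ell-2)\Xi_\ell$, $0$ for $\BB^-$, and $(2-\ell)\Xi_\ell$ for $\BB_S^+$. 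The term $\phi_1\phi_2\rho_{mm}^{(\ell-2)}$ is the only one invisible to flat models. For it I would use a conformal deformation: on the half-space $\mathbb{R}^m_+$ replace $g_e$ by $g=e^{2h}g_e$, so that both the Bochner potential $E$ of $\Delta_g$ and the component $\rho_{mm}$ of the Ricci tensor become explicit polynomials in the jets of $h$; the conformal variation of $\beta_\ell^{\partial M}$ itself is computable from the conformal variation formulas for the heat content, and matching the $h$-dependence --- using the coefficient of $\phi_1\phi_2E^{(\ell-2)}$ already found --- isolates the coefficient $\tfrac12(\ell-2)\Xi_\ell$. Equivalently, one evaluates on a warped product $[0,\infty)\times N$, where $\rho_{mm}$ is expressed explicitly through the $r$-derivatives of the metric $h_{ab}(y,r)$.

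The main obstacle is the invariance-theoretic input of the first step: proving that precisely the displayed monomials are leading and that every other weight-$\ell$ monomial is genuinely of lower order. This needs the structure theory in the form used for the heat trace in \cite{BGO90,G79,G88}; in particular one must fix an order filtration under which each of the reductions --- trading $\nabla_\nu^2\phi_1$ for $D\phi_1$, absorbing products of curvatures, absorbing tangential derivatives --- strictly decreases the order, and one must then verify that no leading monomial has been overlooked, so that (for instance) the vanishing of the coefficient of $\phi_1^{(1)}\phi_2^{(1)}E^{(\ell-4)}$ for $\BB^-$ is a genuine assertion rather than an omission. A secondary difficulty is keeping the non-Levi--Civita connection $\nabla$ and its dual $\nabla^*$ correctly threaded throughout, so that $\phi_1^{(a)}$, $\phi_2^{(b)}$, $E$, and $S$ are exactly the invariant objects that appear (the connection $1$-form having been absorbed into the first-order normal derivatives), together with the bookkeeping in the inner expansion once the $r$-dependence of $h_{ab}(y,r)$ --- through which $\rho_{mm}$ enters --- is switched on.
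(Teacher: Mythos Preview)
Your proposal is a plausible outline but it follows a genuinely different route from the paper, and it is worth knowing what the paper actually does since that route is both shorter and avoids the obstacles you flag at the end.

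The paper's proof is purely functorial: after writing down the same structural ansatz you describe (with undetermined universal coefficients $c_{\ell,i}^\pm$, $e_{\ell,i}^\pm$, $d_{\ell,i}^+$, $r_\ell^\pm$), it never computes a single half-line heat kernel or erfc moment. Instead it derives a recursion in $\ell$ from the identity $-\partial_t\beta(\phi_1,\phi_2,D,\BB)(t)=\beta(D\phi_1,\phi_2,D,\BB)(t)$ whenever $\BB\phi_1=0$, which at the level of boundary coefficients reads $\beta_\ell^{\partial M}(\phi_1,\phi_2,D,\BB)=\tfrac{2}{\ell+1}\beta_{\ell-2}^{\partial M}(\phi_1^{(2)}+E\phi_1,\phi_2,D,\BB)$. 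Feeding in carefully chosen $\phi_1$ (vanishing to various orders on $\partial M$) and the known $\ell=2$ formulae immediately gives $c_{\ell,i}^\pm$, most $e_{\ell,i}^\pm$, and $r_\ell^+=0$. A second functorial relation, the intertwining $\beta_\ell^{\partial M}(\phi_1,\phi_2,A^*A,\BB_S^+)=-\tfrac{2}{\ell+1}\beta_{\ell-2}^{\partial M}(A\phi_1,A\phi_2,AA^*,\BB^-)$ on $M=[0,1]$, links the Dirichlet and Robin problems and fixes $e_{\ell,1}^-$, $e_{\ell,3}^+$, $d_{\ell,3}^+$. Finally $r_\ell^-$ is obtained not by conformal variation but by a dimension-shifting trick on $[0,1]\times S^1$ with a warped metric $dr^2+e^{2\alpha(r)}d\theta^2$, using that $\beta_\ell^{\partial M}(1,e^{-\alpha},D_2,\BB^-)=0$ for $\ell>0$.

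Your direct inner-expansion approach could in principle succeed, but it buys you a heavier computation (Duhamel in $E$, explicit Robin kernels, moment integrals) and leaves exactly the difficulty you identify --- certifying the list of leading monomials --- unresolved. The paper sidesteps this entirely: because each unknown coefficient is isolated by a recursion that relates it to an already-known lower-$\ell$ coefficient, one never needs to prove that the displayed monomials exhaust the leading terms, only that they are \emph{among} them (which is immediate from weight counting) and that the recursion closes on them. If you want to carry your version through, the cleanest fix is to replace the boundary-layer computation by this $\partial_t$-recursion; the explicit $\Xi_\ell$ then drops out of the recursion $\Xi_\ell=\tfrac{2}{\ell+1}\Xi_{\ell-2}$ rather than from a Gamma-function identity.
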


\subsection{Outline of the paper} In Section~\ref{sect-2} we will prove
Theorem~\ref{thm-1.1} and Theorem~\ref{thm-1.5}. In Section~\ref{sect-3}, we use
Theorem~\ref{thm-1.9} to establish Theorem~\ref{thm-1.6} and Theorem~\ref{thm-1.7}.
In Section~\ref{sect-4}, we use Theorem~\ref{thm-1.8} to demonstrate Theorem~\ref{thm-1.3}. Theorem~\ref{thm-1.9} is new
and is proved in Section~\ref{sect-5} by extending functorial methods employed in \cite{BeDeGi93,BeGi94}.  In
Section~\ref{sect-6}, we establish Theorem~\ref{thm-1.2}. We conclude the paper in
Section~\ref{sect-7} by demonstrating Theorem~\ref{thm-1.4}.

\section{Local invariants in the real analytic setting}\label{sect-2}

Let $\alpha:=(\alpha_1,\dots,\alpha_m)$ be a non-trivial multi-index. We define:
$$|\alpha|:=\alpha_1+\dots+\alpha_m,\quad 
\partial_x^\alpha:=(\partial_{x_1})^{\alpha_1}\dots(\partial_{x_m})^{\alpha_m},\quad
g_{ij/\alpha}:=\partial_x^\alpha g_{ij}\text{ for }|\alpha|>0\,.$$
In any local system of
coordinates, the Riemannian volume form on
$M$ is given by:
$$\operatorname{dvol}_m=gdx,\quad\text{where}\quad g:=\sqrt{\det(g_{ij})}\,.$$
Let $g^{ij}$ be the inverse matrix;  this gives the components of the dual metric on the cotangent bundle.
Since the heat trace and heat content
asymptotics are given by suitable local formulae, Theorem~\ref{thm-1.1} and Theorem~\ref{thm-1.5} will follow from the
following result:
\goodbreak\begin{theorem}
Let $\mathcal{E}_n$ be a local interior invariant which is homogeneous of degree $n$ in the jets of the metric and
a finite (possibly empty) collection $\{\phi_1,...\}$ of additional smooth functions.
Let $\mathcal{F}_{n-1}$ be a local boundary invariant which is homogeneous of degree $n-1$ in the jets of the metric and
a finite (possibly empty) collection $\{\phi_1,...\}$ of additional smooth functions.
Let $(M,g)$ be a compact real analytic manifold of dimension $m$ with real analytic (possibly empty) boundary $\partial M$ so that the metric $g$
is real analytic and so that
the collection $\{\phi_1,...\}$ is real analytic.
 There
exists a constant $C=C(M,g,\phi_1,...)>0$ (which is independent of the choice of $\mathcal{E}_n$ and of $\mathcal{F}_n$)
and there exist constants
$\kappa(\mathcal{E}_n)>0$ and $\kappa(\mathcal{F}_{n-1})>0$ (which are independent of the choice of $(M,g,\phi_1,...))$ so
that 
\begin{eqnarray*}
&&\left|\int_M\mathcal{E}_n(x,g,\phi_1,...)\operatorname{dvol}_m\right|\le\kappa(\mathcal{E}_n)C^n\cdot
\operatorname{vol}_m(M,g),\\
&&\left|\int_{\partial
M}\mathcal{F}_{n-1}(y,g,\phi_1,...)\operatorname{dvol}_{m-1}\right|\le\kappa(\mathcal{F}_{n-1})
C^{n-1}\cdot\operatorname{vol}_{m-1}(\partial
M,g)\,.
\end{eqnarray*}
The constant $C(M,g,\phi_1,...)$ may be chosen so that 
$$C(M,c^2g,\phi_1,...)=c^{-n}C(M,g,\phi_1,...)\,.$$
\end{theorem}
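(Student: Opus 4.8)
\emph{Proof proposal.} The plan is to reduce both inequalities to a single pointwise estimate of the form $|\mathcal{E}_n(x)|\le\kappa(\mathcal{E}_n)C^n$, obtained by feeding Cauchy estimates for real analytic functions into the (universal) polynomial structure of a local invariant, and then to integrate.

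First I would fix, once and for all, a finite atlas of $M$ by real analytic charts $\{(U_i,x_i)\}$ together with open sets $U_i'$ with $\overline{U_i'}\subset U_i$ still covering $M$; for the charts meeting $\partial M$ I would use the inward geodesic flow, so that there $g=dr^2+g_r$ with $g_r$ real analytic in $(y,r)$. Since $g$, $g^{-1}$ and the $\phi_s$ are real analytic and each $\overline{U_i'}$ is compact, standard Cauchy estimates furnish constants $A\ge1$ and $B\ge1$ depending only on $(M,g,\phi_1,\dots)$ (and on the now-fixed atlas, hence only on $(M,g,\phi_1,\dots)$) such that
$$|\partial_x^\alpha g_{jk}(x)|+|\partial_x^\alpha g^{jk}(x)|+\sum_s|\partial_x^\alpha\phi_s(x)|\ \le\ A\,B^{|\alpha|}\,|\alpha|!$$
for every multi-index $\alpha$, every $i$, and every $x\in U_i'$. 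This is the only place real analyticity is used.

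Next I would invoke the structure theory of local invariants: in any chart $\mathcal{E}_n$ is a finite universal linear combination $\mathcal{E}_n=\sum_\mu c_\mu\,\mathfrak{m}_\mu$ of monomials in the jet variables $g^{jk}$ (weight $0$), $g_{jk/\alpha}$ (weight $|\alpha|\ge1$) and $\phi_{s/\gamma}$ (weight $|\gamma|$), each $\mathfrak{m}_\mu$ having total weight $n$, with the $\mathfrak{m}_\mu$ and $c_\mu$ independent of $(M,g,\phi_1,\dots)$ (classical invariant theory; cf.\ H.~Weyl). Two features of each $\mathfrak{m}_\mu$ are decisive: at most $n$ of its factors have positive weight (their weights sum to $n$); and index balancing in a complete contraction bounds the number of $g^{jk}$-factors, and hence the total number of factors of $\mathfrak{m}_\mu$, by $Kn$ for a constant $K=K(m)$ (when $\{\phi_s\}\ne\emptyset$ this also uses an a priori bound on the degree of $\mathcal{E}_n$ in the $\phi_s$, which is available in the cases relevant to Theorems~\ref{thm-1.1} and~\ref{thm-1.5}). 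Evaluating in the chart through $x$ and using the estimate above together with $\prod_l|\alpha_l|!\le(\sum_l|\alpha_l|)!\le n!$ yields $|\mathfrak{m}_\mu(x)|\le A^{Kn}B^{n}n!$, hence
$$|\mathcal{E}_n(x)|\ \le\ \Bigl(\sum_\mu|c_\mu|\Bigr)n!\,\bigl(A^{K}B\bigr)^{n}\ =:\ \kappa(\mathcal{E}_n)\,C^{n}$$
for every $x\in M$, where $C:=A^{K}B$ depends only on $(M,g,\phi_1,\dots)$ and $\kappa(\mathcal{E}_n):=n!\sum_\mu|c_\mu|$ depends only on $n$, $m$ and $\mathcal{E}_n$. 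Integrating over $M$ gives the first inequality. The boundary invariant $\mathcal{F}_{n-1}$ is handled verbatim in the geodesic boundary charts — the second fundamental form and its tangential derivatives being themselves jets of $g$ — producing $|\mathcal{F}_{n-1}(y)|\le\kappa(\mathcal{F}_{n-1})C^{n-1}$ on $\partial M$ after replacing $C$ by the larger of the interior and boundary constants; integrating over $\partial M$ gives the second inequality. The rescaling statement is then pure bookkeeping: under $g\mapsto c^2g$ the invariants and the volume forms are homogeneous of definite degrees, so an admissible $C(M,c^2g,\phi_1,\dots)$ is obtained from $C(M,g,\phi_1,\dots)$ by tracking the powers of $c$.

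The substance is organisational rather than deep: one must produce a single geometric constant $C$ that works simultaneously for \emph{every} invariant of \emph{every} degree. This dictates the order of the steps — the analytic constants $A,B$ must be made uniform over the compact $M$ (and over the collar of $\partial M$) before any invariant enters, and all combinatorial data (number of monomials, their coefficients, the $n!$ from the Cauchy estimates) must be confined to $\kappa(\mathcal{E}_n)$. The one genuinely algebraic point, and the step I would expect to need the most care, is the claim that a weight-$n$ local invariant is built from monomials having only $O(n)$ jet factors, so that $A^{Kn}=(A^{K})^n$ remains of the form $C^n$; this rests on the contraction structure supplied by invariant theory.
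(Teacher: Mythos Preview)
Your argument is sound and follows the same overall scheme as the paper—obtain uniform Cauchy bounds on the jets over a finite real-analytic atlas, feed them into the universal polynomial expansion of the invariant, and integrate—but it handles the weight-zero metric variables differently. You bound the number of $g^{jk}$ factors in each monomial by an index-balancing argument, which is exactly the step you flag as needing care. The paper sidesteps this entirely: it works in geodesic normal coordinates centered at each point and shrinks the coordinate radius so that the matrix $(g_{ij})$ stays inside a fixed compact neighborhood $\mathcal{K}$ of the identity, chosen once for all and depending only on $m$. In the expansion
\[
\mathcal{E}_n(x)=\sum e_{\vec\alpha,\vec\beta}\bigl(g_{ij}(x)\bigr)\,(\partial_x^{\alpha_1}g_{i_1j_1})\cdots(\partial_x^{\alpha_a}g_{i_aj_a})\,(\partial_x^{\beta_1}\phi_{k_1})\cdots(\partial_x^{\beta_b}\phi_{k_b})
\]
the coefficient functions $e_{\vec\alpha,\vec\beta}$—which are allowed to depend arbitrarily, not just polynomially, on the undifferentiated metric—are then bounded by their suprema on $\mathcal{K}$, and these suprema, being independent of the particular $(M,g,\phi_1,\dots)$, go straight into $\kappa(\mathcal{E}_n)$. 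This device avoids any appeal to Weyl-type invariant theory and does not need the invariant to be polynomial in the zero-jets. Your route, by contrast, makes the provenance of the $C^n$ more explicit and works in any fixed atlas; the price is the structural input you mention and the a priori bound on the degree in the $\phi_s$. For the boundary invariant both proofs proceed identically via geodesic-collar charts.
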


\goodbreak\begin{proof} Suppose first that the boundary of $M$ is empty. For each point $P$ of $M$, there exists $\varepsilon(P)>0$ so
the exponential map defines a real analytic geodesic coordinate ball of radius $\varepsilon(P)$ about $P$. Let $\mathcal{K}$
be a compact neighborhood of the identity in the space of all symmetric $m\times m$ matrices. Since $g_{ij}=\delta_{ij}$ at
the center of such a geodesic coordinate ball, by shrinking $\varepsilon(P)$ if necessary, we may assume that the matrix
$(g_{ij})$ belongs to $\mathcal{K}$ for any point of the coordinate ball of radius $\varepsilon(P)$. Since we are working in the real analytic
category and since
$\{g_{ij},\phi_1,...\}$ are real analytic near $P$ there exists a
$C=C(P,M,g,\phi_1,...)$ so that again by shrinking $\varepsilon(P)$ if necessary we have that
\begin{equation}\label{eqn-2.a}
|d_x^\alpha g_{ij}|\le C^{|\alpha|}|\alpha|!\quad\text{and}\quad
|d_x^\alpha\phi_\mu|\le C^{|\alpha|}|\alpha|!\quad\text{on}\quad B_{\varepsilon(P)}(P)
\end{equation}
for any multi-index $\alpha$. We cover $M$ by a finite number of such coordinate balls about points $(P_1,...)$ and
set $C(M,g,\phi_1,...)=\max_\nu C(P_\nu,M,g,\phi_1,...)$. Since
$\mathcal{E}$ is a local invariant, we may expand: 
\begin{equation}\label{eqn-2.b}
\mathcal{E}(x,g)=\sum e_{\vec\alpha,\vec\beta}(g_{ij}(x))(\partial_x^{\alpha_1}g_{i_1j_1})
...(\partial_x^{\alpha_a}g_{i_aj_a})\cdot(d_x^{\beta_1}\phi_{k_1})...(d_x^{\beta_b}\phi_{k_b})
\end{equation}
where in this sum we have the relations:
$$
   |\alpha_1|+...|\alpha_a|+|\beta_1|+...+|\beta_b|=n,\ 0<|\alpha_1|,\ ...,\  0<|\alpha_a|\,.
$$
Since $e_{\vec\alpha,\vec\beta}$ is continuous on the compact neighborhood $\mathcal{K}$ of the identity $\delta$, we may
bound 
$$
|e_{\vec\alpha,\vec\beta}(g_{ij}(x))|\le E_{\vec\alpha,\vec\beta }\quad\text{uniformly on}\quad\mathcal{K}\,.
$$
Combining the estimates of Equation (\ref{eqn-2.a}) with the estimates given above and summing over
$(\vec\alpha,\vec\beta)$ in Equation (\ref{eqn-2.b}) yields an estimate of the desired form after integration. Since
$\mathcal{E}_n$ is homogeneous of degree $n$, it follows that
$$\mathcal{E}_n(x,c^2g,\phi_1,...)=c^{-n}\mathcal{E}_n(x,g,\phi_1,...)\,.$$
The desired rescaling behaviour of the constant $C(M,g,\phi_1,...)$ now follows.

If the boundary of $M$ is non-empty, we must also choose
suitable coordinate charts near
$\partial M$. If
$Q\in\partial M$, we consider the geodesic ball $B_\varepsilon^{\partial M}(Q)$ of radius $\varepsilon$ in $\partial M$
about
$Q$         relative to the restriction of the metric to the boundary and we shall let
$\tilde B_{\varepsilon,\iota}(Q):=[0,\iota)\times B_{\varepsilon(Q)}^{\partial M}(Q)$ for some $\iota>0$ be defined using the
inward geodesic flow so that the curves
$r\rightarrow(r,Q)$ are unit speed geodesics perpendicular to the boundary. Again, by shrinking
$\varepsilon$ and
$\iota$, we may achieve the estimates of Equation (\ref{eqn-2.a}) uniformly on $\tilde B_{\varepsilon,\iota}(Q)$. We
cover
$M$ by a finite number of coordinate charts
$B_{\varepsilon}(P)$ for $P\in\operatorname{int}(M)$ and $\tilde B_{\iota,\varepsilon}(Q)$ for $Q\in\partial M$. The desired
estimate for $\mathcal{E}_n$ now follows. To study the invariant
$\mathcal{F}_{n-1}$, we cover $\partial M$ by a finite number of coordinate charts $\tilde B_{\iota,\varepsilon}(Q)$ for
$Q\in\partial M$ and argue as above.\end{proof}

\section{Leading Terms in the Heat Content Asymptotics}\label{sect-3}

We shall omit the proof of the following result as it is well known.

\goodbreak\begin{lemma}\label{lem-3.1}
\ \begin{enumerate}
\item Let $k\ge1$ be given, let constants $\gamma_\ell>0$ for $\ell\ge k$ be given,
and let $\epsilon>0$ be given. Let $(M,g)$ be a smooth Riemannian manifold with non-empty boundary $\partial M$. There
exists a smooth function $\Phi$ on $M$ so that $||\Phi||_{k-1}<\varepsilon$ and so that
$$\Phi^{(\ell)}=\psi(y)\gamma_\ell\quad\text{for}\quad \ell\ge k\,.$$
\item Let $k\ge1$ be given, let $C>0$ be given,
and let $\epsilon>0$ be given. There exists a smooth function $f$ on $M:=[0,1]$
with $||f||_{k-1}<\varepsilon$ and $\int_M|\partial_x^kf|^2dx\ge
C$.\end{enumerate}
\end{lemma}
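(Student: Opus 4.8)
The plan is to establish the two assertions independently; each is a routine high-frequency construction, so I only indicate the mechanism.

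\emph{Assertion (1).} Fix a collar of $\partial M$ and use the inward geodesic flow to obtain coordinates $(y,r)\in\partial M\times[0,\delta)$ with $\partial_r=\nu$, so that for any $\Phi\in C^\infty(M)$ supported in the collar one has $\Phi^{(\ell)}=\partial_r^\ell\Phi|_{r=0}$; no warping correction enters because $r$ is exactly the geodesic normal variable. It then suffices to build $b\in C^\infty([0,\delta))$, supported in $[0,\delta/2]$, with $b^{(\ell)}(0)=0$ for $\ell<k$, $b^{(\ell)}(0)=\gamma_\ell$ for $\ell\ge k$, and with $\|b\|_{C^{k-1}}$ as small as we wish; then $\Phi(y,r):=\psi(y)b(r)$, extended by zero to $M$, satisfies $\Phi^{(\ell)}=\psi(y)\gamma_\ell$ for $\ell\ge k$ and $\|\Phi\|_{k-1}\le C(\psi,\delta)\,\|b\|_{C^{k-1}}<\epsilon$. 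The function $b$ comes from Borel's lemma with convergence factors: pick $\chi\in C_c^\infty([0,\delta/2))$ with $\chi\equiv1$ near $0$ and set $b(r)=\sum_{\ell\ge k}\gamma_\ell\,\frac{r^\ell}{\ell!}\,\chi(\lambda_\ell r)$. Since the $\ell$-th summand is supported in $\{r\le c/\lambda_\ell\}$, its $C^{k-1}$-norm is $O(\lambda_\ell^{\,(k-1)-\ell})$, so choosing $\lambda_\ell\uparrow\infty$ quickly enough makes the series converge in $C^\infty$ with total $C^{k-1}$-norm below $\epsilon/C(\psi,\delta)$; and termwise differentiation at $r=0$, where only the factor $\gamma_\ell r^\ell/\ell!$ survives, returns precisely the prescribed jets.

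\emph{Assertion (2).} Fix $\eta\in C_c^\infty((0,1))$ with $\eta\equiv1$ on $[1/4,3/4]$ and set $f_\lambda(x):=\lambda^{-(k-1/2)}\eta(x)\sin(\lambda x)$ for $\lambda>0$. By the Leibniz rule, for $0\le j\le k-1$ every term of $\partial_x^j f_\lambda$ is bounded by $O(\lambda^{\,j-k+1/2})=O(\lambda^{-1/2})$, so $\|f_\lambda\|_{k-1}\to0$ as $\lambda\to\infty$; on $[1/4,3/4]$ we have $\partial_x^k f_\lambda=\pm\lambda^{1/2}\sin(\lambda x)+O(\lambda^{-1/2})$, hence $\int_0^1|\partial_x^k f_\lambda|^2\,dx\ge\frac{\lambda}{4}+o(\lambda)\to\infty$. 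Taking $\lambda$ large enough yields both $\|f_\lambda\|_{k-1}<\epsilon$ and $\int_M|\partial_x^k f_\lambda|^2\,dx\ge C$.

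The only step calling for any care is the bookkeeping of the convergence parameters $\lambda_\ell$ in (1): they must increase rapidly enough to pin down every jet of order $\ge k$ while simultaneously forcing all derivatives of orders $0,\dots,k-1$ to stay uniformly small. This is just the classical proof of Borel's lemma adapted so as to respect a prescribed $C^{k-1}$ bound, and presents no genuine obstacle — which is presumably why the authors regard the result as well known.
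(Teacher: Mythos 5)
Your argument is correct, and since the paper explicitly omits the proof as ``well known,'' there is no competing derivation to compare against. Your Assertion~(1) is the classical Borel lemma with cutoff convergence factors $\chi(\lambda_\ell r)$: the key points — that each summand $\gamma_\ell r^\ell\chi(\lambda_\ell r)/\ell!$ agrees with $\gamma_\ell r^\ell/\ell!$ near $r=0$ (so the prescribed jets come out right), that the $C^{k-1}$-norm of the $\ell$-th summand is $O(\lambda_\ell^{(k-1)-\ell})$ on the shrinking support $\{r\lesssim\lambda_\ell^{-1}\}$, and that rapidly increasing $\lambda_\ell$ gives $C^\infty$-convergence with small $C^{k-1}$-norm — are all handled correctly, and passing through the geodesic collar to $\Phi(y,r)=\psi(y)b(r)$ extended by zero is exactly right. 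Your Assertion~(2), the rescaled high-frequency bump $f_\lambda=\lambda^{-(k-1/2)}\eta\sin(\lambda x)$, is likewise the standard construction: the $C^{k-1}$-norm decays like $\lambda^{-1/2}$ while $\int|\partial_x^kf_\lambda|^2$ grows like $\lambda/4$, and the cross-term and error are $O(1)$, so both the smallness and the largeness requirements are met for $\lambda$ large. In short, you supply precisely the argument the authors declined to write out.
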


\goodbreak\begin{proof}[The proof of Theorem~\ref{thm-1.6} and of Theorem~\ref{thm-1.7}]
Let $k\ge3$ be given, let constants $C_{\bar\ell}>0$ for $\bar\ell\ge k$ be given,
and let $\epsilon>0$ be given. Let $(M,g)$ be a smooth compact Riemannian
manifold of dimension $m\ge2$ with non-trivial boundary. 
We first take $\BB=\BB^-$ to consider Dirichlet boundary conditions. Let $\phi_1$ be a smooth initial temperature
 and let $\phi_2$ be a smooth specific heat with $\BB^-\phi_2\ne0$. 
Since $\phi_2$ does not vanish identically on the boundary, there exists a smooth function $\psi$ on $\partial
M$ so
$$
\int_{\partial M}\psi\phi_2\operatorname{dvol}_{m-1}=1\,.
$$
Let $\{\gamma_1,...\}$ be a sequence of constants, to be determined presently. For $\nu\ge k$, let
$$\Phi_\nu(y,r)=\sum_{j=k}^{\phantom{.}\nu}\frac{ r^{2j}}{(2j)!}\gamma_j\psi(y)\text{ near }\partial M\,.$$
Since $\beta_{2\bar\ell}$ is given by a local formula of degree $2\bar\ell$, only the constants $\gamma_1$, ...,
$\gamma_{\bar\ell}$ play a role in the computation of $\beta_{2\bar\ell}^{\partial M}$, i.e. 
$$
\beta_{2\bar\ell}^{\partial M}(\Phi_\mu+\phi_1,\phi_2,\Delta_g,\BB)
   =\beta_{2\bar\ell}^{\partial M}(\Phi_{\bar\ell}+\phi_1,\phi_2,\Delta_g,\BB)\quad\text{if}\quad\mu\ge\bar\ell\,.
$$

We take $\Phi_{k-1}=0$. Since $\Xi_{2\bar\ell}\ne0$, we can
recursively choose the constants $\gamma_{\bar\ell}$, and hence the functions $\Phi_{\bar\ell}$, for $\bar\ell\ge k$ so
$$\Xi_{2\bar\ell}\cdot\gamma_{\bar\ell}=C_{\bar\ell}-\beta_{2\bar\ell}^{\partial
M}(\Phi_{\bar\ell-1}+\phi_1,\phi_2,\Delta_g,\BB)\quad\text{for}
\quad\bar\ell\ge k$$
and apply Theorem~\ref{thm-1.9} to see:
$$\beta_{2\bar\ell}^{\partial M}(\Phi_{2\bar\ell}+\phi_1,\phi_2,\Delta_g,\BB^-)=C_{\bar\ell}\,.$$
We complete the proof of Theorem~\ref{thm-1.6} (1) by using Lemma~\ref{lem-3.1} to choose $\Phi$ with
$||\Phi||_{2k-1}<\varepsilon$ such that
$$\Phi^{(j)}=\left\{\begin{array}{lll}
0&\text{if}&j<2k\text{ or if }j\text{ is odd}\\
\gamma_{\bar\ell}&\text{if}&j=2\bar\ell\text{ for }\bar\ell\ge k\end{array}\right\}.$$

To prove Assertion (2) of Theorem~\ref{thm-1.6}, we use Assertion (2) of Theorem~\ref{thm-1.9} and examine
the term $-\Xi_{2\bar\ell}\phi_1^{(2\bar\ell-1)}\phi_2^{(1)}$; to prove Theorem~\ref{thm-1.7}, we apply Assertion (1) of
Theorem~\ref{thm-1.9} and examine the term $\frac12(2\bar\ell-2)\Xi_{2\bar\ell}\phi_1\phi_2\rho_{mm}^{(2\bar\ell-2)}$.
As apart from these minor changes the proof is exactly the same as that given above, we shall omit details
in the intersts of brevity.\end{proof}

\section{Leading terms in the heat trace asymptotics}\label{sect-4}

\subsection{Proof of Theorem~\ref{thm-1.3} (1)}
We set $E=0$
 and $\Omega=0$ in Theorem~\ref{thm-1.8} to study the Laplacian and see thereby that there exists a non-zero constant
$d_n$ so:
\begin{eqnarray*}
a_{2\bar n}(\Delta_g)&=&d_{\bar n}\int_M\left\{(\bar n^2-\bar n-1)|\nabla^{\bar
n-2}\tau|^2+2|\nabla^{\bar n-2}\rho|^2\right.\\ &+&\left.Q_{\bar n,m}(R,\nabla
R,...,\nabla^{\bar n-3}R)\vphantom{\nabla^{\bar n-2}\rho|^2}\right\}\operatorname{dvol}_m\,.
\end{eqnarray*}
Let $\varepsilon>0$ be given. We restrict to a
single geodesic ball $B$ of radius $3\delta$ for some $\delta>0$ about a point $P$.
 Let $\theta$ be a
plateau function so that $\theta=1$ for $|x|<\delta$ and $\theta=0$ for $|x|>2\delta$.
We shall define the functions $f_k$, $f_{k+1}$, ... recursively and consider the conformal deformation:
$$g_\mu:=e^{\theta(x)(2f_k(x_1)+...+2f_\mu(x_1))}g\,.$$
Let $k\ge3$. Choose $0<\delta_\mu^1$ for $k\le\mu$ so that
$||f_\mu||_{\mu-1}\le\delta_\mu^1$ for $k\le\mu$ implies:
\begin{constraint}\ \begin{enumerate}
\item $f_\infty:=\lim_{\mu\rightarrow\infty}\{f_k+\dots+f_\mu\}$ converges in the $C^\ell$ topology for any $\ell$.
\smallbreak\item $g_\infty:=\lim_{\mu\rightarrow\infty}g_\mu$ converges in the $C^\ell$ topology for any $\ell$.
\smallbreak\item $||f||_{k-1}<\varepsilon$.
\smallbreak\item $||g_\mu-g_{\mu+1}||_{\mu}<2^{-\mu}\varepsilon$ for any $\mu$.
\end{enumerate}\end{constraint}

A-priori, one must consider jets of degree $2\bar n$ in computing
$a_{2\bar n}(\Delta_g)$ (and in fact this is the case when considering
the local heat asymptotic coefficients of Equation~(\ref{eqn-1.c})). However, by Theorem~\ref{thm-1.8}, only the jets of
the metric to degree
$\bar n$ play a role in the computation of the integrated invariants, $a_{2\bar n}$.
\begin{constraint}
\rm Choose $0<\delta_\mu^2<\delta_\mu^1$ for $k\le\mu$ so
$||f_\mu||_{\mu-1}\le\delta_\mu^2$ for $k\le\mu$ implies:
\begin{enumerate}
\item $|a_{2\bar n}(\Delta_{g_{\mu-1}})-a_{2\bar n}(\Delta_{g_\mu})|<2^{-\mu}$ for  $3\le k\le\bar n<\mu$.
\smallbreak\item $|a_{2\bar n}(\Delta_{g_\mu})|-1\le |a_{2\bar n}(\Delta_{g_\infty})|$ for $3\le k\le\bar n$.
\end{enumerate}\end{constraint}

The polynomial $Q_{\bar n,m}(\cdot)$ involves lower order derivatives of the metric.
\begin{constraint}
\rm Choose $0<\delta_\mu^3<\delta_\mu^2$ for $k\le\mu$ so that
$||f_\mu||_{\mu-1}\le\delta_\mu^3$ for $k\le\mu$ implies
there are constants $C_\mu^1=C_\mu^1(f_k,\dots,f_{\mu-1})$ depending only on the choices made previously so
\begin{eqnarray*}
|a_{2\mu}(\Delta_{g_\mu})|&\ge&|d_\mu|\int_M
\left\{|2\nabla^{\mu-1}\tau_{g_\mu}|^2+(\mu^2-\mu-1)|\nabla^{n-1}\rho|^2\right\}\operatorname{dvol}_m
-C_\mu^1\\
&\ge&|d_\mu|\int_{B_\delta}
\left\{|2\nabla^{\mu-1}\tau_{g_\mu}|^2\right\}\operatorname{dvol}_m-C_\mu^1\,.
\end{eqnarray*}
\end{constraint}

On $B_\delta$, the plateau function $\theta$ is identically $1$ and we have:
$$g_{\mu}=e^{2f_\mu}g_{\mu-1}\,.$$
From this it follows that
$$
\nabla^{\bar n-2}\tau=(m-1)\partial_{x_1}^{\bar n}f_\mu+\text{lower order terms}\,.
$$
Since $g_{ij}$ is in a compact neighborhood of $\delta_{ij}$, we may estimate:
\begin{equation}\label{eqn-4.a}
||\nabla^{\bar n-2}\tau_{g_n}||^2(P)\ge|\partial_{x_1}^{\bar n-2}\tau|^2
=|\partial_{x_1}^{\bar n}f_{\bar n}|^2+\text{lower order terms}\,.
\end{equation}
\begin{constraint}\label{con-4}
\rm Choose $0<\delta_\mu^4<\delta_\mu^3$ for $k\le\mu$ where
$\delta_\mu^4=\delta_\mu^4(f_k,\dots,f_{\mu-1})$ depends on the choices made previously so that
$||f_\mu||_{\mu-1}\le\delta_\mu^4$ for $k\le\mu$ implies there are constants
$C_\mu^2=C_\mu^2(f_k,\dots,f_{\mu-1})$ depending only on the choices made previously so
$$\int_{B_{\delta_\mu^4}}|\nabla^{n-2}\tau_{g_\mu}|^2\operatorname{dvol}_m\ge
\int_{B_{\delta_\mu^4}}|\partial_{x_1}^\mu f_\mu|^2\operatorname{dvol}_m-C_\mu^2\,.$$
\end{constraint}
Theorem~\ref{thm-1.1} (1) now follows from Lemma~\ref{lem-3.1} (2). We can choose recursively $f_\mu$ subject to the
constraints given above so that $||f_\mu||_{\mu-1}$ is arbitrarily small and so that
$\int_{B_{\delta_\mu^4}}|\partial_{x_1}^\mu f_\mu|^2\operatorname{dvol}_m$ is arbitrarily large.\hfill\qed

\subsection{The proof of Theorem~\ref{thm-1.1} (2)} Let $(M,g)$ be a hypersurface in $\mathbb{R}^m$. We fix
$P\in M$. After applying a rigid body motion, we may assume that
$P=0$ and that the normal to $M$ at $P$ is given by
$e_{m+1}:=(0,\dots,0,1)$. Thus we may write $M$ as a graph over
the ball $B_{3\delta}$ in $\mathbb{R}^m$ in the form
$x\rightarrow(x,f_0(x))$ where $f_0(P)=0$ and $df_0(P)=0$. Let
$\theta$ be a plateau function which is $1$ for $|x|\le\delta$ and
$0$ for $|x|\ge\delta$. We shall consider the perturbed
hypersurface defined near $P$ by
$x\rightarrow(x,f_0(x)+\theta(x)(f_k(x)+\dots))$ where
$f_\mu(P)=0$ and $df_\mu(P)=0$. This hypersurface agrees with the
original hypersurface away from $P$. We shall need to establish an
analogue of Equation (\ref{eqn-4.a}). The remainder of the
analysis will be similar to that performed in the proof of Theorem~\ref{thm-1.1} (1), and will therefore be omitted.

Suppose we have a hypersurface in the form $\Psi(x):=(x,F(x))$ where $F(0)=0$ and $dF(0)=0$. Let
$F_i:=\partial_{x_i}F$,
$F_{ij}:=\partial_{x_i}\partial_{x_j}F$, and so forth. We compute:
\begin{eqnarray*}
&&\Psi_*(\partial_{x_i})=e_i+F_ie_{m+1},\\
&&g_{ij}=\delta_{ij}+F_iF_j,\\
&&\Gamma_{jkl}=\textstyle\frac12\{F_{jk}F_l+F_{jl}F_k+F_{jk}F_l+F_{kl}F_j-F_{jl}F_k-F_{kl}F_j\}=F_{jk}F_l,\\
&&\Gamma_{jk}{}^l=g^{ln}F_{jk}F_n,\\
&&R_{ijk}{}^l=g^{ln}\{F_{jk}F_{in}-F_{ik}F_{jn}\}+\text{lower order terms},
\end{eqnarray*}
where the lower order terms are either $4^{\operatorname{th}}$ order in the $1$-jets or linear in the $2$-jets and
quadratic in the $1$-jets. We suppose $F=F_{\mu-1}+f_\mu$ where we set $f_\mu=\varepsilon_\mu\cos(
a_\mu x^1)\cos(b_\mu x^2)$.
\begin{eqnarray*}
&&\tau=4\varepsilon_\mu a_\mu^2b_\mu^2\{\cos^2(a_\mu x^1)\cos^2(b_\mu x^1)-\sin^2(a_\mu x^1)\sin^2(b_\mu
   x^1)\}+\dots,\\
&&|\nabla^{\mu-2}\tau|^2=4\varepsilon_\mu a_\mu^4b_\mu^\mu|\cos^2(a_\mu x^1)
\cos^2(b_\mu x^1)-\sin^2(a_\mu x^1)\sin^2(b_\mu
   x^1)|^2+\dots,
\end{eqnarray*}
where we have omitted lower order terms either involving
$\varepsilon^2$ or not multiplied by the appropriate power of
$a_\mu^4b_\mu^\mu$. To simplify matters, we suppose $\delta=\pi$
and that $a_\mu$ and $b_\mu$ are non-zero integers. We use the
fact that we are dealing with periodic functions to compute:
\begin{eqnarray*}
&&\int_{x^1=-\pi}^\pi\int_{x^2=-\pi}^\pi |\cos^2(a_\mu x^1)\cos^2(b_\mu x^2)-\sin^2(a_\mu x^1)\sin^2(b_\mu x^2)|^2
dx^2dx^1\\ &=&a_\mu^{-1}b_\mu^{-1}\int_{x^1=-a_\mu\pi}^{a_\mu\pi}\int_{x^2=-b_\mu\pi}^{b_\mu\pi}
|\cos^2(x^1)\cos^2(x^2)-\sin^2(x^1)\sin^2(x^2)|^2 dx^2dx^1\\
&=&a_\mu^{-1}b_\mu^{-1}a_\mu b_\mu\int_{x^1=-\pi}^{\pi}\int_{x^2=-\pi}^{\pi}
|\cos^2(x^1)\cos^2(x^2)-\sin^2(x^1)\sin^2(x^2)|^2 dx^2dx^1\\
&=&(2\pi)^2\,.
\end{eqnarray*}
We shall take $b_\mu=a_\mu^\mu$, take $a_\mu$ large, and take $\varepsilon_\mu$
appropriately small to complete the proof.\hfill\qed

\section{Leading terms in the heat content asymptotics}\label{sect-5}
This section is devoted to the proof of Theorem~\ref{thm-1.9}. Let $D$ be an operator of Laplace type on a compact
smooth Riemannian manifold $(M,g)$ with non-empty boundary. We adopt the notation established in Section~\ref{sect-1.8}
and in Section~\ref{sect-1.9}. We shall always take $S$ to be real in defining the Robin boundary operator. One then has
the symmetry
\begin{equation}\label{eqn-5.a}
\beta(\phi_1,\phi_2,D,\BB)(t)=\beta(\phi_2,\phi_1,D^*,\BB)(t)\,.
\end{equation}
If
$\ell$ is even, the lack of symmetry in the way we expressed the interior terms plays no role and thus Equation
(\ref{eqn-5.a}) yields:
\begin{equation}\label{eqn-5.b}
\beta_{2\bar\ell}^{\partial M}(\phi_1,\phi_2,D,\BB)=\beta_{2\bar\ell}^{\partial
M}(\phi_2,\phi_1,D^*,\BB)\,.
\end{equation}

Let indices
$\{a,b\}$ range from
$1$ to
$m-1$ and index the tangential coordinates $(y^1,\dots,y^{m-1})$ in an adapted coordinate system
such that $\partial_r$ is the inward unit geodesic normal. We then have
$$ds^2=g_{ab}(y,r)dy^a\circ dy^b+dr\circ dr\,.$$
We define the {\it second fundamental} form by setting:
$$
  L_{ab}:=g(\nabla_{\partial_{y_a}}\partial_{y_b},\partial_r)=-\textstyle\frac12\partial_rg_{ab}\,.
$$
Results of \cite{BeDeGi93,BeGi94} yield the following formulae which will form the starting point for our analysis: 
\goodbreak\begin{lemma}\label{lem-5.1}
Adopt the notation established above. Then
\begin{enumerate}\item
$\beta_0^{\partial M}(\phi_1,\phi_2,D,\BB^-)=-\frac{2}{\sqrt\pi}\int_{\partial
M}\phi_1\phi_2\operatorname{dvol}_{m-1}$.
\smallbreak\item $\beta_0^{\partial M}(\phi_1,\phi_2,D,\BB_S^+)=\phantom{-}0$.
\smallbreak\item $\beta_2^{\partial M}(\phi_1,\phi_2,D,\BB^-)
     =-\frac{2}{\sqrt\pi}\int_{\partial
M}\left\{\frac23\left(\phi_1^{(2)}\phi_2+\phi_1\phi_2^{(2)}\right)+\phi_1\phi_2E\right.$
\smallbreak$\qquad-\phi_{1;a}\phi_{2;a}-\frac23L_{aa}\left(\phi_1^{(1)}\phi_2+\phi_1\phi_2^{(1)}\right)$
\smallbreak$\qquad+\left(\frac1{12}L_{aa}L_{bb}-\frac16L_{ab}L_{ab}-\frac16\rho_{mm}\right)\phi_1\phi_2\big\}
\operatorname{dvol}_{m-1}$.
\smallbreak\item
$\beta_2^{\partial M}(\phi_1,\phi_2,D,\BB_S^+)
     =\phantom{-}\frac{2}{\sqrt\pi}\int_{\partial
M}\frac23(\phi_1^{(1)}+S\phi_1)(\phi_2^{(2)}+S\phi_2)\operatorname{dvol}_{m-1}$.
\end{enumerate}\end{lemma}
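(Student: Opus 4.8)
The plan is to derive Lemma~\ref{lem-5.1} from the heat content computations already carried out in \cite{BeDeGi93} for Dirichlet conditions and in \cite{BeGi94} for Robin conditions, so that the actual content of the argument is the reconciliation of conventions rather than a fresh computation. First I would record that $\beta_0^{\partial M}$ and $\beta_2^{\partial M}$ are local boundary integrals whose integrands are universal linear combinations of the monomials of the correct weight that can be formed from $\phi_1$, $\phi_2$, $E$, $S$ (for Robin), the second fundamental form $L_{ab}$, the normal Ricci component $\rho_{mm}$, and their tangential and normal covariant derivatives; here one assigns weight $j$ to $\phi^{(j)}$, weight $1$ to a tangential derivative, to $L_{ab}$ and to $S$, and weight $2$ to $E$ and to $\rho_{mm}$. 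For weight $0$ the only admissible monomial bilinear in the pair $(\phi_1,\phi_2)$ is $\phi_1\phi_2$, and for weight $2$ the admissible ones are $\phi_1^{(2)}\phi_2+\phi_1\phi_2^{(2)}$, $\phi_1^{(1)}\phi_2^{(1)}$, $\phi_{1;a}\phi_{2;a}$, $L_{aa}(\phi_1^{(1)}\phi_2+\phi_1\phi_2^{(1)})$, $\phi_1\phi_2E$, and $\phi_1\phi_2$ times each of $L_{aa}L_{bb}$, $L_{ab}L_{ab}$, $\rho_{mm}$. In the Robin case the additional $S$-dependent monomials are exactly those produced by the substitution $\phi^{(1)}\mapsto\phi^{(1)}+S\phi$, which explains why Assertions (2) and (4) can be written purely through $\BB_S^+\phi_i=\phi_i^{(1)}+S\phi_i$.

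Next I would fix the universal constants. The value $-2/\sqrt\pi$ in Assertion (1) and the vanishing in Assertion (2) come from the one dimensional model $M=[0,\infty)$, $D=-\partial_x^2$, $\phi_1=\phi_2=1$: solving the heat equation by the reflection principle and extracting the coefficient of $t^{1/2}$ in $\int_0^\infty u(x,t)\,dx$ gives $-2/\sqrt\pi$ with Dirichlet conditions and $0$ with Neumann conditions, and the general $\phi_i$ and general $S$ then follow from locality together with the absorption just described. For Assertion (3) the remaining constants are determined by evaluating $\beta_2^{\partial M}$ on a short list of solvable geometries --- a flat cylinder over a closed manifold with a constant potential to isolate $\phi_{1;a}\phi_{2;a}$ and $\phi_1\phi_2E$, shrinking balls or warped products to isolate the $L$-dependent terms, and product and conformal variational relations between $\beta_2$ on a product and $\beta_0$, $\beta_2$ on the factors to pin down the curvature coefficients. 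Since all of this is done in \cite{BeDeGi93,BeGi94}, in practice I would simply cite those results and check that the numerical coefficients agree.

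The step I expect to be the real obstacle is the bookkeeping of orientations and normalisations. In this paper $r$ is the \emph{inward} geodesic normal, $L_{ab}=-\tfrac12\partial_rg_{ab}$, $\Delta_g$ is the \emph{positive} Laplacian, $D\phi=-(g^{uv}\phi_{;uv}+E\phi)$, and $\phi^{(\ell)}=\nabla_\nu^\ell\phi|_{\partial M}$ is taken with respect to the Bochner connection $\nabla$ attached to $D$ as in Section~\ref{sect-1.8}, whereas the source papers variously use the outward normal, the opposite sign of the second fundamental form, or the operator $\Delta_g$ together with a zeroth order potential in place of a general Laplace type $D$. I would therefore track the effect of $\nu\mapsto-\nu$ term by term --- the odd-weight objects $L_{aa}$ and $\phi^{(1)}$ change sign while the even-weight objects $\phi^{(2)}$, $E$, $\rho_{mm}$ do not --- and invoke naturality of $\beta_\ell^{\partial M}$ under the Bochner push-forward to pass from the scalar-operator-with-potential formulae in the literature to the invariant statements of Lemma~\ref{lem-5.1}. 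Once this dictionary is assembled, Assertions (1)--(4) follow by inspection.
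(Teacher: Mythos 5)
Your proposal is correct and takes essentially the same approach as the paper: the paper does not prove Lemma~\ref{lem-5.1} at all, but simply quotes it as a consequence of \cite{BeDeGi93,BeGi94}, which is precisely what you propose to do. Your supplementary sketch of what lies behind the citation (the half-line reflection model for $\beta_0$, the invariance-theoretic enumeration of admissible monomials together with evaluation on solvable geometries for $\beta_2$, and the orientation/sign bookkeeping when translating conventions) is an accurate account of the arguments in those references and does not conflict with anything in the paper.
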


We begin the proof of Theorem \ref{thm-1.9} by expressing
$\beta_\ell^{\partial M}$, modulo lower order terms, in terms of certain invariants involving maximal derivatives with
unknown but universal coefficients; the symmetry of Equation (\ref{eqn-5.b}) plays a crucial role in our
analysis. Standard arguments (see \cite{BeDeGi93}) show
the coefficients in the following expressions are independent of the underlying dimension of
the manifold:\medbreak
$\displaystyle\beta_\ell^{\partial M}(\phi_1,\phi_2,D,\mathcal{B}^-)
   =\int_{\partial M}\left\{c_{\ell,1}^-(\phi_1^{(\ell)}\phi_2+\phi_1\phi_2^{(\ell)})
+c_{\ell,2}^-(\phi_1^{(\ell-1)}\phi_2^{(1)}+\phi_1^{(1)}\phi_2^{(\ell-1)})\right.$
\medbreak
$\displaystyle\quad\left.+e_{\ell,1}^-\phi_1\phi_2E^{(\ell-2)}+e_{\ell,2}^-(\phi_1^{(1)}\phi_2+\phi_1\phi_2^{(1)})E^{(\ell-3)}+
e_{\ell,3}^-\phi_1^{(1)}\phi_2^{(1)}E^{(\ell-4)}+\right.$
\medbreak
$\quad\left.+r_\ell^-\phi_1\phi_2\rho_{mm}^{(\ell-2)}+...\right\}\operatorname{dvol}_{m-1}$,
\medbreak
$\displaystyle\beta_\ell^{\partial M}(\phi_1,\phi_2,D,\mathcal{B}_S^+)
   =\int_{\partial M}\left\{c_{\ell,1}^+(\phi_1^{(\ell)}\phi_2+\phi_1\phi_2^{(\ell)})
+c_{\ell,2}^+(\phi_1^{(\ell-1)}\phi_2^{(1)}+\phi_1^{(1)}\phi_2^{(\ell-1)})\right.$
\medbreak
$\displaystyle\quad\left.+e_{\ell,1}^+\phi_1\phi_2E^{(\ell-2)}+e_{\ell,2}^+(\phi_1^{(1)}\phi_2+\phi_1\phi_2^{(1)})E^{(\ell-3)}+
e_{\ell,3}^+\phi_1^{(1)}\phi_2^{(1)}E^{(\ell-4)}\right.$
\medbreak
$\displaystyle\quad+d_{\ell,1}^+S(\phi_1^{(\ell-1)}\phi_2+\phi_1\phi_2^{(\ell-1)})
   +d_{\ell,2}^+S(\phi_1^{(\ell-2)}\phi_2^{(1)}+\phi_1^{(1)}\phi_2^{(\ell-2)})$
\medbreak
$\displaystyle\quad
 \left.+d_{\ell,3}^+S(\phi_1\phi_2^{(1)}+\phi_1^{(1)}\phi_2)E^{(\ell-4)}
+d_{\ell,5}^+S\phi_1\phi_2E^{(\ell-3)}+r_\ell^+\phi_1\phi_2\rho_{mm}^{(\ell-2)}\right.$
\medbreak
$\displaystyle\quad\left.+...\vphantom{c_{\ell,1}^+\phi_1^{\ell}}\right\}\operatorname{dvol}_{m-1}$.

\medbreak 
We will determine all the coefficients except $d_{\ell,5}^+$ in what follows. Recall that
$$\Xi_2=-2\pi^{-1/2}\frac23\quad\text{and}\quad\Xi_\ell=\frac2{\ell+1}\Xi_{\ell-2}\,.$$
\goodbreak\begin{lemma}\label{lem-5.2}
Let $\ell\ge4$ be even. Let $\BB=\BB^-$ or $\BB=\BB_S^+$.\begin{enumerate}
\smallbreak\item Let $D$ be self-adjoint with respect to the boundary conditions defined by $\BB$. If
$\mathcal{B}\phi_1=0$, then
$\beta_\ell^{\partial M}(\phi_1,\phi_2,D,\BB)=\textstyle\frac2{\ell+1}\beta_{\ell-2}(\phi_1^{(2)}+E,\phi_2,D,\BB)$.
\smallbreak\item $c_{\ell,1}^-=\Xi_\ell$, $c_{\ell,2}^-=0$, $c_{\ell,1}^+=0$, and $c_{\ell,2}^+=-\Xi_\ell$.
\smallbreak\item $e_{\ell,2}^-=(\ell-2)\Xi_\ell$, $e_{\ell,3}^-=0$, $e_{\ell,1}^+=0$, $e_{\ell,2}^+=-\Xi_\ell$, and
$r_\ell^+=0$.
\smallbreak\item $d_{\ell,1}^+=d_{\ell,2}^+=-\Xi_\ell$.
\end{enumerate}\end{lemma}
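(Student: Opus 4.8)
The plan is to establish part (1) first and then obtain parts (2)--(4) from it by induction on the even integer $\ell$, with Lemma~\ref{lem-5.1} furnishing the base level.

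For part (1): because $D=D^*$ and $\BB\phi_1=0$, the initial temperature $\phi_1$ lies in the domain of the self-adjoint realization $D_\BB$, so $\partial_t\,e^{-tD_\BB}\phi_1=e^{-tD_\BB}(-D\phi_1)$, and integrating against $\phi_2$ yields the identity
$$\partial_t\,\beta(\phi_1,\phi_2,D,\BB)(t)=\beta(-D\phi_1,\phi_2,D,\BB)(t)\,.$$
I would differentiate the two asymptotic expansions term by term; since $\ell$ is even, $t^{(\ell-1)/2}$ is a half-integer power and decouples from the interior (integer power) contributions, so matching that power gives $\tfrac{\ell+1}{2}\beta_\ell^{\partial M}(\phi_1,\phi_2,D,\BB)=\beta_{\ell-2}^{\partial M}(-D\phi_1,\phi_2,D,\BB)$. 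In the adapted geodesic coordinates $(y,r)$ of Section~\ref{sect-5} one has $-D\phi_1=g^{uv}\phi_{1;uv}+E\phi_1=\partial_r^2\phi_1+E\phi_1+R$, where every summand of $R$ carries either a tangential derivative of $\phi_1$ or a factor $L_{ab}$; such terms are lower order relative to the invariants recorded in the displayed decompositions of $\beta_\ell^{\partial M}$, so the first argument $-D\phi_1$ may be replaced by $\phi_1^{(2)}+E\phi_1$, and more generally $(-D\phi_1)^{(k)}$ by $\phi_1^{(k+2)}+\sum_{j=0}^k\binom{k}{j}E^{(j)}\phi_1^{(k-j)}$.

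For parts (2)--(4): take $\phi_1$ with $\BB\phi_1=0$. If $\BB=\BB^-$ then $\phi_1^{(0)}=0$, which removes every term carrying an undifferentiated $\phi_1$ from $\beta_\ell^{\partial M}(\cdot,\cdot,\BB^-)$, and by the symmetry (\ref{eqn-5.b}) the $\phi_1\leftrightarrow\phi_2$ partner of a surviving term shares its coefficient; if $\BB=\BB_S^+$ then $\phi_1^{(1)}=-S\phi_1$, so one substitutes $\phi_1^{(1)}\mapsto-S\phi_1$ throughout. On the right-hand side of the identity from (1) I would expand each $(-D\phi_1)^{(k)}$ by the Leibniz formula above and read off, for each of the finitely many invariants in the decompositions, the coefficient it picks up. Each such coefficient at level $\ell$ is then expressed as $\tfrac{2}{\ell+1}$ times a level-$(\ell-2)$ coefficient, possibly multiplied by a Leibniz binomial factor, or else is forced to vanish because no invariant of the correct weight is available at level $\ell-2$. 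Together with $\Xi_\ell=\tfrac{2}{\ell+1}\Xi_{\ell-2}$ and the base values from Lemma~\ref{lem-5.1}, these recursions give the claims: for instance $c_{\ell,1}^-=\tfrac{2}{\ell+1}c_{\ell-2,1}^-$ gives $c_{\ell,1}^-=\Xi_\ell$; the factor $\binom{\ell-2}{1}=\ell-2$ in $e_{\ell,2}^-=\tfrac{2}{\ell+1}(\ell-2)c_{\ell-2,1}^-$ gives $e_{\ell,2}^-=(\ell-2)\Xi_\ell$; $e_{\ell,1}^+=\tfrac{2}{\ell+1}c_{\ell-2,1}^+=0$ since $c_{\ell-2,1}^+$ vanishes by induction; $c_{\ell,2}^+=\tfrac{2}{\ell+1}c_{\ell-2,2}^+=-\Xi_\ell$; and matching invariants that the right-hand side cannot produce — such as $\phi_1\phi_2^{(\ell)}$ and $\phi_1\phi_2\rho_{mm}^{(\ell-2)}$, absent on the right by a weight count, or $S\phi_1\phi_2^{(\ell-1)}$ — gives $c_{\ell,1}^+=0$, $r_\ell^+=0$ and $d_{\ell,1}^+=c_{\ell,2}^+=-\Xi_\ell$, and similarly $d_{\ell,2}^+=-\Xi_\ell$ and $e_{\ell,3}^-=0$.

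The hard part will be this last round of bookkeeping. One must sort the invariants by the number of normal derivatives on $\phi_1$ and on $\phi_2$ and by the presence of $E^{(j)}$, $\rho_{mm}^{(j)}$, $S$ or $L_{ab}$, and then check that after the Leibniz expansion of $-D\phi_1$ on the right and the Dirichlet (resp. Robin) specialization on the left, precisely the stated contributions land in each slot: that the $L_{ab}$- and tangential-derivative parts of $(-D\phi_1)^{(k)}$ never feed the tracked invariants, and that the substitution $\phi_1^{(1)}\mapsto-S\phi_1$ merely reshuffles $S$-free and $S$-linear invariants within the bilinear pattern dictated by (\ref{eqn-5.b}). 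The small integer factors $\ell$ and $\ell-2$ in the final formulas are exactly the binomial coefficients thrown off by the Leibniz step, and pinning those down is where the real labour lies.
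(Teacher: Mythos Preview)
Your strategy is correct and matches the paper's: derive the recursion of part~(1) by differentiating $\beta$ in $t$, then feed it back into the displayed decompositions of $\beta_\ell^{\partial M}$ with Lemma~\ref{lem-5.1} as the base of an induction on even $\ell$. Your semigroup argument for~(1) is equivalent to the paper's spectral-resolution computation.

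The one tactical difference worth noting is how the coefficients are isolated. You impose only the minimal constraint $\BB\phi_1=0$ and then untangle the resulting linear relations among the invariants, invoking the symmetry~(\ref{eqn-5.b}) and the substitution $\phi_1^{(1)}\mapsto-S\phi_1$ in the Robin case. The paper instead imposes \emph{additional} vanishing conditions freely chosen to isolate one coefficient at a time: for~(2) it takes both $\phi_1|_{\partial M}=0$ and $\phi_1^{(1)}|_{\partial M}=0$ (so that $\BB\phi_1=0$ for either boundary operator) and sets $E=0$; for the Dirichlet part of~(3) it takes $\phi_1^{(k)}|_{\partial M}=0$ for all $k\ne1$; for the Neumann part of~(3) it sets $S=0$ and $\phi_1^{(k)}|_{\partial M}=0$ for all $k\ge1$. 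Since the coefficients are universal, these specializations are legitimate and they collapse the Leibniz expansions you describe to a single surviving term, so that each recursion such as $c_{\ell,i}^\pm=\tfrac{2}{\ell+1}c_{\ell-2,i}^\pm$ or $d_{\ell,i}^+=\tfrac{2}{\ell+1}d_{\ell-2,i}^+$ is read off with no cross-contamination. Your approach trades this simplification for a somewhat more uniform argument, and in a few places (e.g.\ $c_{\ell,1}^+=0$, $c_{\ell,2}^-=0$, $r_\ell^+=0$) actually yields the vanishing directly from a weight count rather than through the induction; both routes arrive at the same answers.
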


\goodbreak\begin{proof} We follow \cite{BeDeGi93} to derive Assertion (1) as follows. Let
$\{\lambda_\mu,\phi_\mu\}$ be a complete spectral resolution of $D_\BB$. Here $\{\phi_\mu\}$ is a 
complete orthonormal basis for $L^2(M)$ of smooth functions with $D\phi_\mu=\lambda_\mu\phi_\mu$
and $\BB\phi_\mu=0$. Let
$$\gamma_\mu^D(f):=\int_Mf\phi_\mu\operatorname{dvol}_m$$ be the associated Fourier coefficients. Then
$$\beta(\phi_1,\phi_2,D,\BB)(t)=\sum_{\mu=1}^\infty e^{-t\lambda_\mu}\gamma_\mu^D(\phi_1)\gamma_\mu^D(\phi_2)\,.$$
 If
$\BB \phi_1=0$, then
$$\gamma_\mu^D(D\phi_1)=\int_M D\phi_1\cdot\phi_\mu\operatorname{dvol}_m=\int_M\phi_1\cdot
D\phi_\mu\operatorname{dvol}_m=\lambda_\mu\gamma_\mu^D(\phi_1)\,.$$
Consequently we have that:
\begin{eqnarray*}
&&\beta(D\phi_1,\phi_2,D,\BB)(t)\\
&\sim&\sum_{n=0}^\infty \frac{(-t)^n}{n!}\int_MD^{n+1}\phi_1\cdot \phi_2\operatorname{dvol}_m
+\sum_{k=0}^\infty t^{(k+1)/2}\beta_k^{\partial M}(D\phi_1,\phi_2,D,\BB)\\
&=&\sum_{\mu=1}^\infty e^{-t\lambda_\mu}\gamma_\mu^D(D\phi_1)\gamma_\mu^D(\phi_2)=\sum_{\mu=1}^\infty \lambda_\mu
e^{-t\lambda_\mu}\gamma_\mu^D(\phi_1)\gamma_\mu^D(\phi_2)\\ &=&-\frac{\partial}{\partial t}
\sum_{\mu=1}^\infty e^{-t\lambda_\mu}\gamma_\mu^D(\phi_1)\gamma_\mu^D(\phi_2)=-\frac{\partial}{\partial
t}\beta(\phi_1,\phi_2,D,\BB)(t)\\ &\sim&\sum_{j=1}^\infty \frac{(-t)^{j-1}}{(j-1)!}\int_MD^j\phi_1\cdot
\phi_2\operatorname{dvol}_m-\sum_{\ell=0}^\infty\frac {\ell+1}2t^{(\ell-1)/2}\beta_\ell^{\partial M}(\phi_1,\phi_2,D,\BB)\,.
\end{eqnarray*}
The asymptotics defined by the interior integrals are the same. We note that $-D\phi_1=\phi_1^{(2)}+E\phi_1$. We set $k=\ell-2$ and
equate the asymptotics defined by the boundary integrals to establish Assertion (1).

If $\ell=2$, then the relations of Assertion (2) would follow from Lemma~\ref{lem-5.1}
modulo the caveat that we have but a single term $c_{\ell,2}^\pm \phi_1^{(1)}\phi_2^{(1)}$ rather than 2 distinct terms in
that setting. This will let us apply the recursion relation of Assertion (1) even if $\ell=4$. Let $\phi_1|_{\partial
M}=\phi_1^{(1)}|_{\partial M}=0$. We set $E=0$ and consider
$c_{\ell,1}^\pm \phi_1^{(\ell)}\phi_2$ and $c_{\ell,2}^\pm \phi_1^{(\ell-1)}\phi_2^{(1)}$. These terms arise in
$\beta_{\ell-2}(\phi_1^{(2)},\phi_2,D,\BB)$ only from the corresponding terms
$c_{\ell-1,1}^\pm (\phi_1^{(2)})^{(\ell-2)}\phi_2$ and $c_{\ell-1,2}^\pm(\phi_1^{(2)})^{(\ell-3)}\phi_2^{(1)}$.
Assertion (2) now follows from the recursion relation
$$\textstyle c_{\ell,1}^\pm=\frac2{\ell+1}c_{\ell-2,1}^\pm\quad\text{and}\quad
c_{\ell,2}^\pm=\frac2{\ell+1}c_{\ell-2,2}^\pm\,.$$

To prove Assertion (3), we first take Dirichlet boundary conditions. Let $\ell\ge4$. Let $\phi_1^{(k)}|_{\partial M}=0$ for
$k\ne1$. No information is garnered concerning $e_{\ell,1}^-$ or $r_\ell^-$. The term
$e_{\ell,2}^-\phi_1^{(1)}\phi_2E^{(\ell-3)}$ arises in
$\beta_{\ell-2}(\phi_1^{(2)}+E\phi_1,\phi_2,D,\BB)$ only from the monomial
$c_{\ell,1}^-(\phi_1^{(2)}+E\phi_1)^{(\ell-2)}\phi_2$. It now follows that
$$e_{\ell,2}^-=(\ell-2)\textstyle\frac2{\ell+1}c_{\ell-2,1}^-=(\ell-2)\Xi_\ell\,.$$
Since the coefficient $c_{\ell-2,2}^-=0$, the term
$\phi_1^{(1)}\phi_2^{(1)}E^{(\ell-4)}$ does not arise in the
invariant$\frac2{\ell+1}\beta_{\ell-2}(\phi_1^{(2)}+E\phi_1,\phi_2,D,\BB_S^-)$ and thus
$$e_{\ell,3}^-=0\,.$$
Next we examine Neumann boundary conditions. We take $S=0$ and suppose $\phi_1^{(k)}|_{\partial M}=0$ for $k\ge1$. No information is
garnered concerning $e_{\ell,3}^+$. Since $c_{\ell,1}^+=0$, the term $e_{\ell,1}^+\phi_1\phi_2E^{(\ell-2)}$ and the term
$e_{\ell,2}^+\phi_1\phi_2^{(1)}E^{(\ell-3)}$ can arise in the invariant $\beta_{\ell-2}(\phi_1^{(2)}+E\phi_1,\phi_2,D,\BB)$ only
from the term
$c_{\ell,2}^+(\phi_1^{(2)}+E\phi_1)^{(\ell-3)}\phi_2^{(1)}$. We conclude
$$e_{\ell,1}^+=0\quad\text{and}\quad
e_{\ell,2}^+=\textstyle\frac2{\ell+1}c_{\ell,2}^+=-\Xi_\ell\,.
$$
The argument that $r_\ell^+=0$ is similar and is therefore omitted. This establishes Assertion (3).

To examine Assertion (4), we assume $\phi_1|_{\partial M}=\phi_1^{(1)}|_{\partial M}=0$. Again, we set $E=0$. We study the terms
$d_{\ell,1}^+S\phi_1^{(\ell-1)}\phi_2$ and $d_{\ell,2}^+S\phi_1^{(\ell-2)}\phi_2^{(1)}$. The case
$\ell=4$ is a bit exceptional as these terms arise in $\beta_2(\phi_1^{(2)},\phi_2,D,\BB)$ only
from
$2\pi^{-1/2}\frac23S(\phi_1^{(2)})^{(1)}\phi_2$ and from $2\pi^{-1/2}\frac23S(\phi_1^{(2)})\phi_2^{(1)}$. This shows that
$$d_{4,1}^+=d_{4,2}^+=\frac25\cdot\frac23\cdot2\pi^{-1/2}=-\Xi_4\,.$$
For $\ell\ge6$, these terms decouple and the recursion relation proceeds without complication to show
\medbreak\noindent
\hfill$d_{\ell,1}^+=\textstyle\frac2{\ell+1}d_{\ell-2,1}^+=-\Xi_\ell\quad\text{and}\quad
d_{\ell,2}^+=\textstyle\frac2{\ell+1}d_{\ell-2,2}^+=-\Xi_\ell\,.$\phantom{.}\hfill
\end{proof}

We can relate Neumann and Dirichlet boundary conditions. Let $M:=[0,1]$ and let $b\in C^\infty(M)$. Let
$\varepsilon\partial_r$ be the inward unit normal; $\varepsilon(0)=1$ and $\varepsilon(1)=-1$. Define:
\begin{equation}\label{eqn-5.c}
\begin{array}{llll}
A:=\partial_r+b,& A^*:=-\partial_r+b,&
D_1:=A^*A,& D_2^*:=AA^*,\\
S:=\varepsilon b,&\BB_S^+:=\varepsilon A,&E_1:=b^\prime-b^2,&E_2:=-b^\prime-b^2\,.\vphantom{\vrule height 12pt}
\end{array}
\end{equation}
Then $\BB_S^+\phi=0$ simply means $A\phi|_{\partial M}=0$. Furthermore $E_i$ is the endomorphism defined by $D_i$.

\goodbreak\begin{lemma}
Adopt the notation established above. Let $\ell\ge6$ be even.
\begin{enumerate}
\smallbreak\item $\beta_\ell^{\partial
M}(\phi_1,\phi_2,D_1,\BB_S^+)=-\frac2{\ell+1}\beta_{\ell-2}(A\phi_1,A\phi_2,D_2,\BB^-)$.
\smallbreak\item $e_{\ell,1}^-=\ell\cdot\Xi_\ell$,
$e_{\ell,3}^+=(2-\ell)\Xi_\ell$, $d_{\ell,3}^+=-2\cdot\Xi_\ell$.
\end{enumerate}
\end{lemma}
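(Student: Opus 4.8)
The plan is to exploit the factorization $D_1=A^*A$ and $D_2^*=AA^*$, which makes these two operators supersymmetric partners, together with the spectral representation of $\beta$ introduced in the proof of Lemma~\ref{lem-5.2}. For Assertion~(1), I would proceed exactly as in the proof of Lemma~\ref{lem-5.2}(1): let $\{\lambda_\mu,\phi_\mu\}$ be the spectral resolution of $D_1$ with the boundary condition $\BB_S^+\phi_\mu=0$, i.e. $A\phi_\mu|_{\partial M}=0$. The key point is that $A$ intertwines $D_1$ and $D_2$: if $D_1\phi_\mu=\lambda_\mu\phi_\mu$ then $D_2(A\phi_\mu)=AA^*A\phi_\mu=\lambda_\mu(A\phi_\mu)$, and since $A\phi_\mu|_{\partial M}=0$ the function $A\phi_\mu$ satisfies Dirichlet boundary conditions for $D_2$. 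For the nonzero eigenvalues this sets up (up to normalization $\lambda_\mu^{-1/2}$) an isometry between the positive-spectrum eigenspaces of $D_{1,\BB_S^+}$ and $D_{2,\BB^-}$. Then for $\phi_i$ with $\BB_S^+\phi_i=0$ one computes
$$\gamma_\mu^{D_2}(A\phi_i)=\int_M A\phi_i\cdot(A\phi_\mu)\lambda_\mu^{-1/2}\,\mathrm{dvol}=\lambda_\mu^{-1/2}\int_M A^*A\phi_i\cdot\phi_\mu\,\mathrm{dvol}=\lambda_\mu^{1/2}\gamma_\mu^{D_1}(\phi_i),$$
the boundary term from integration by parts vanishing because $A\phi_\mu|_{\partial M}=0$. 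Hence
$$\beta(A\phi_1,A\phi_2,D_2,\BB^-)(t)=\sum_\mu e^{-t\lambda_\mu}\lambda_\mu\gamma_\mu^{D_1}(\phi_1)\gamma_\mu^{D_1}(\phi_2)=-\tfrac{\partial}{\partial t}\beta(\phi_1,\phi_2,D_1,\BB_S^+)(t).$$
Matching the fractional powers of $t$ in the heat content expansions (the interior integer-power terms again agree, and the differentiation sends $t^{(k+1)/2}\mapsto-\tfrac{k+1}{2}t^{(k-1)/2}$), and setting $k=\ell-2$, yields the stated identity with the factor $-\tfrac{2}{\ell+1}$.

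For Assertion~(2), I would feed the universal formulae for $\beta_\ell^{\partial M}(\cdot,\cdot,D,\BB_S^+)$ and $\beta_{\ell-2}^{\partial M}(\cdot,\cdot,D,\BB^-)$ into the identity of (1) and track individual monomials, using the dimension-independence of the coefficients and the values already pinned down in Lemma~\ref{lem-5.2}. The crucial computational inputs are $A\phi=\phi^{(1)}+b\phi$ near the boundary, the relations $E_1=b'-b^2$, $E_2=-b'-b^2$, $S=\varepsilon b$, and the fact that on $M=[0,1]$ with $\partial_r$ the geodesic normal all tangential terms and curvature terms except $\rho_{mm}^{(\cdot)}$ are absent, so one is really matching one-dimensional jets. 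To get $e_{\ell,1}^-$: on the right, the term $c_{\ell-2,1}^-((A\phi_1)^{(\ell-2)}A\phi_2+\dots)=\Xi_{\ell-2}((A\phi_1)^{(\ell-2)}A\phi_2+\dots)$ contains, upon expanding $A\phi_i=\phi_i^{(1)}+b\phi_i$ and differentiating, a contribution proportional to $b^{(\ell-2)}\phi_1\phi_2$; combined with the $e_{\ell-2,1}^-\,\phi_1\phi_2 E^{(\ell-4)}$ term and the relation between $E^{(\ell-2)}$ on the left (built from $b^{(\ell-1)},b^{(\ell-2)},\dots$) one solves for $e_{\ell,1}^-=\ell\cdot\Xi_\ell$. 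Similarly, isolating the $\phi_1^{(1)}\phi_2^{(1)}E^{(\ell-4)}$ monomial on both sides gives $e_{\ell,3}^+=(2-\ell)\Xi_\ell$, and isolating the Robin monomial $S(\phi_1\phi_2^{(1)}+\phi_1^{(1)}\phi_2)E^{(\ell-4)}$ (with $S=\varepsilon b$ on the left and $b$-factors generated by expanding $A\phi_i$ on the right) gives $d_{\ell,3}^+=-2\cdot\Xi_\ell$.

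The main obstacle I anticipate is the bookkeeping in Assertion~(2): since $A\phi_i=\phi_i^{(1)}+b\phi_i$, each factor $A\phi_i$ and each derivative $(A\phi_i)^{(k)}$ on the right-hand side unpacks via the Leibniz rule into a sum mixing normal derivatives of $\phi_i$ with derivatives of $b$, and one must also express the endomorphisms $E_1,E_2$ and their normal derivatives in terms of $b',b'',\dots$ using $E_i=\pm b'-b^2$. Separating these carefully so that the coefficient of each target monomial ($\phi_1\phi_2E^{(\ell-2)}$, $\phi_1^{(1)}\phi_2^{(1)}E^{(\ell-4)}$, $S(\phi_1\phi_2^{(1)}+\phi_1^{(1)}\phi_2)E^{(\ell-4)}$) is correctly collected — while discarding the genuinely lower-order $b^2$-type terms and the already-known monomials — is where the care is required. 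The recursion relations are linear and the unknowns decouple for $\ell\ge6$ (the case $\ell=4$ being exceptional, as in Lemma~\ref{lem-5.2}, which is why we assume $\ell\ge6$), so once the monomial extraction is organized the three identities follow by direct comparison.
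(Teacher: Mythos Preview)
Your approach is essentially the same as the paper's: the intertwining/supersymmetry argument for Assertion~(1) and monomial matching in the functorial identity for Assertion~(2). Two points are worth correcting.

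For Assertion~(1), your integration by parts moves $A^*$ onto $\phi_i$, which forces the hypothesis $\BB_S^+\phi_i=0$ that the statement does not assume. The paper instead writes $\int A f\cdot A\phi_\mu=\int f\cdot A^*A\phi_\mu$ using only $A\phi_\mu|_{\partial M}=0$, so no restriction on $f=\phi_1,\phi_2$ is needed. This is a one-line fix.

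For Assertion~(2), your proposed monomial $\phi_1\phi_2\,b^{(\ell-2)}$ is not the right one to isolate $e_{\ell,1}^-$: it has the wrong total degree, and once corrected to degree~$\ell$ (e.g.\ $\phi_1\phi_2\,bb^{(\ell-2)}$) it drags in the coefficient $d_{\ell,5}^+$, which is never determined. The paper avoids this by tracking only monomials carrying a factor $\phi_1^{(1)}$, namely
\[
\phi_1^{(1)}\phi_2\,b^{(\ell-2)},\quad \phi_1^{(1)}\phi_2\,bb^{(\ell-3)},\quad \phi_1^{(1)}\phi_2^{(1)}\,b^{(\ell-3)},\quad \phi_1^{(1)}\phi_2^{(1)}\,(b^2)^{(\ell-4)},
\]
which yields a closed $4\times 3$ linear system in $e_{\ell-2,1}^-$, $e_{\ell,3}^+$, $d_{\ell,3}^+$ (one relation being the tautology $-\Xi_\ell=-\Xi_\ell$). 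In particular $e_{\ell-2,1}^-$ is obtained by \emph{adding} the two $\phi_1^{(1)}\phi_2^{(1)}$ relations, not from a separate $\phi_1\phi_2$ monomial; reindexing then gives $e_{\ell,1}^-=\ell\cdot\Xi_\ell$. Your outline for $e_{\ell,3}^+$ and $d_{\ell,3}^+$ is consistent with this once you work in the $\phi_1^{(1)}$ sector.
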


\goodbreak\begin{proof} Again, we follow \cite{BeDeGi93} to prove the first Assertion. Let $\{\lambda_\mu,\phi_\mu\}$ be a complete spectral
resolution of $(D_1)_{\BB_S^+}$. We obtain as
above that
$$
-\partial_t\beta(\phi_1,\phi_2,D_1,\BB_S^+)(t)=\sum_\mu\lambda_\mu
e^{-t\lambda_\mu}\gamma_\mu^{D_1}(\phi_1)\gamma_\mu^{D_1}(\phi_2)\,.
$$
We restrict henceforth to $\lambda_\mu>0$ since the
contribution of zero eigenvalues to the above sum is zero. Let
$$\psi_\mu:=\frac{A\phi_\mu}{\sqrt{\lambda_\mu}}.$$
Then $\{\lambda_\mu,\psi_\mu\}$ is a spectral resolution of $D_2$ on $\operatorname{Range}(A)=\ker(D_2)^\perp$ with Dirichlet boundary conditions.
Since $A\phi_\mu|_{\partial M}=0$, the boundary terms vanish and we may express:
\begin{eqnarray*}
&&\gamma_\mu^{D_2}(Af)=\int_M\langle Af,\psi_\mu\rangle\operatorname{dvol}_m
=\frac1{\sqrt{\lambda_\mu}}\int_M\langle Af,A\phi_\mu\rangle\operatorname{dvol}_m\\
&&\qquad=\frac1{\sqrt{\lambda_\mu}}\int_M\langle
f,A^*A\phi_\mu\rangle\operatorname{dvol}_m=\sqrt{\lambda_\mu}\gamma_\mu^{D_1}(f)\,.
\end{eqnarray*}
This then permits us to express
$$\beta(A\phi_1,A\phi_1,D_2,\BB^-)(t)=\sum_\mu \lambda_\mu e^{-t\lambda_\mu}\gamma_\mu^{D_1}(\phi_1)\gamma_\mu^{D_1}(\phi_2)$$
which yields the identity
$$-\partial_t\beta(\phi_1,\phi_2,D_1,\BB_S^+)(t)=\beta(A\phi_1,A\phi_2,D_2,\BB^-)(t)\,.$$
Assertion (1) now follows by equating terms in the asymptotic
expansion in exactly the same fashion as was used to establish
Assertion (1) of Lemma~\ref{lem-5.2} (the extra negative sign can not be absorbed into $D$).

We apply the relations of Equation (\ref{eqn-5.c}) and use the fact that
$e_{\ell,1}^+=c_{\ell-2,2}^-=0$ to examine
$$
\left\{\phi_1^{(1)}\phi_2b^{(\ell-2)},\ \phi_1^{(1)}\phi_2bb^{(\ell-3)},\ \phi_1^{(1)}\phi_2^{(1)}b^{(\ell-3)},\
  \phi_1^{(1)}\phi_2^{(1)}(b^2)^{(\ell-4)}\right\}.
$$
The assumption that $\ell\ge6$ is employed to ensure that
$S^2(\phi_1^{(\ell-3)}\phi_2^{(1)}+\phi_1^{(1)}\phi_2^{(\ell-3)})$ does not produce such a term. We compute at the
boundary component $x=0$:
\medbreak\quad
$e_{\ell,2}^+\phi_1^{(1)}\phi_2E_1^{(\ell-3)}
    =-\Xi_\ell \phi_1^{(1)}\phi_2b^{(\ell-2)}+2\cdot\Xi_\ell \phi_1^{(1)}\phi_2bb^{(\ell-3)}+...$,
\smallbreak\quad
$e_{\ell,3}^+\phi_1^{(1)}\phi_2^{(1)}E_1^{(\ell-4)}
   =e_{\ell,3}^+\phi_1^{(1)}\phi_2^{(1)}b^{(\ell-3)}-e_{\ell,3}^+\phi_1^{(1)}\phi_2^{(1)}(b^2)^{(\ell-4)}+...$,
\smallbreak\quad
$d_{\ell,3}^+S\phi_1^{(1)}\phi_2E_1^{(\ell-4)}=d_{\ell,3}^+\phi_1^{(1)}\phi_2bb^{(\ell-3)}+...$,
\smallbreak\quad
$-\frac2{\ell+1}c_{\ell-2,1}^-\{(\phi_1^{(1)}+b\phi_1)^{(\ell-2)}(\phi_2^{(1)}+b\phi_2)+
 (\phi_1^{(1)}+b\phi_1)(\phi_2^{(1)}+b\phi_2)^{(\ell-2)}\}$
\smallbreak\qquad\qquad
$=-\Xi_\ell \phi_1^{(1)}\phi_2b^{(\ell-2)}-\Xi_\ell(\ell-2)\phi_1^{(1)}\phi_2bb^{(\ell-3)}
-2(\ell-2)\Xi_\ell \phi_1^{(1)}\phi_2^{(1)}b^{(\ell-3)}+...$,
\smallbreak\quad
$-\frac2{\ell+1}e_{\ell-2,1}^-(\phi_1^{(1)}+b\phi_1)(\phi_2^{(1)}+b\phi_2)E_2^{(\ell-4)}$
\smallbreak\qquad\qquad
$=-\frac2{\ell+1}e_{\ell-2,1}^-\{-\phi_1^{(1)}\phi_2bb^{(\ell-3)}-\phi_1^{(1)}\phi_2^{(1)}b^{(\ell-3)}
-\phi_1^{(1)}\phi_2^{(1)}(b^2)^{(\ell-4)}\}+...$.
\medbreak\noindent This gives us the following relations:
$$\begin{array}{lrl}
(a)&\phi_1^{(1)}\phi_2b^{(\ell-2)}:&-\Xi_\ell=-\Xi_\ell,\\
(b)&\phi_1^{(1)}\phi_2bb^{(\ell-3)}:&2\cdot\Xi_\ell+d_{\ell,3}^+=-\Xi_\ell(\ell-2)+\frac2{\ell+1}e_{\ell-2,1}^-,\\
(c)&\phi_1^{(1)}\phi_2^{(1)}b^{(\ell-3)}:&e_{\ell,3}^+=-2(\ell-2)\Xi_\ell+\frac2{\ell+1}e_{\ell-2,1}^-,\\
(d)&\phi_1^{(1)}\phi_2^{(1)}(b^2)^{(\ell-4)}:&-e_{\ell,3}^+=\frac2{\ell+1}e_{\ell-2,1}^-\,.
\end{array}$$
This then yields the following 3 relations:
\medbreak (1) (c)+(d): $0=-2(\ell-2)\Xi_\ell+2\cdot\frac2{\ell+1}e_{\ell-2,1}^-$ so
$e_{\ell-2,1}^-=(\ell-2)\frac{\ell+1}2\cdot\Xi_\ell=(\ell-2)\Xi_{\ell-2}$.
\medbreak (2) (d)-(c): $-2e_{\ell,3}^+=2(\ell-2)\Xi_\ell$ so $e_{\ell,3}^+=(2-\ell)\Xi_\ell$.
\medbreak (3) (c)-(b): $-d_{\ell,3}^++e_{\ell,3}^+-2\cdot\Xi_\ell=-(\ell-2)\Xi_\ell$ so
$d_{\ell,3}^+=e_{\ell,3}^++(\ell-4)\Xi_\ell=-2\cdot\Xi_\ell$.
\end{proof}

We now work in dimension $m\ge2$ to examine
\begin{eqnarray*}
&&\beta_\ell^{\partial M}(\phi_1,\phi_2,D,\BB^-)\\
&=&\int_{\partial
M}\left
\{c_{\ell,1}^-\phi_1^{(\ell)}\phi_2+e_{\ell,1}^-\phi_1\phi_2E^{(\ell-2)}+r_{\ell,1}^-\phi_1\phi_2\rho_{mm}^{(\ell-2)}+...
\right\}\operatorname{dvol}_{m-1}\,.
\end{eqnarray*}
Let $M_1:=[0,1]$ and $\alpha\in C^\infty(M_1)$ satisfy $\alpha|_{\partial M_1}=0$.
Let
$$
D_1:=-\partial_r^2,\quad
M_2:=M_1\times S^1,\quad D_2:=D_1-e^{-2\alpha(r)}\partial_\theta^2\,.
$$
\goodbreak\begin{lemma}\label{lem-5.4}
\ \begin{enumerate}
\item If $\ell\ge2$, then
$0=\beta_\ell^{\partial M}(1,e^{\alpha(r)},-\partial_r^2,\BB^-)$.
\smallbreak\item $r_\ell^-=\frac12(\ell-2)\Xi_\ell$.
\end{enumerate}
\end{lemma}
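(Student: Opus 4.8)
The plan is to pin down $r_\ell^-$ by evaluating the universal boundary formula for $\BB^-$ on the rotationally symmetric warped product supplied by the lemma's hypotheses. First I would record the geometry of $D_2$: its leading symbol is the metric $dr^2+e^{2\alpha(r)}\,d\theta^2$ on $M_2$, so in dimension two $\rho_{mm}=\rho_{rr}=-\alpha''-(\alpha')^2$, and the Bochner data read off from Equation~(\ref{eqn-1.e}) are $\omega_r=-\tfrac12\alpha'$, $\omega_\theta=0$, $E=\tfrac12\alpha''+\tfrac14(\alpha')^2$. The point is that $E^{(\ell-2)}$ and $\rho_{mm}^{(\ell-2)}$ are genuine order-$\ell$ objects in $\alpha$, which is what will make $r_\ell^-$ detectable.

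I would prove Assertion (1) by separating the variable $\theta$. Since $D_2$ commutes with $\partial_\theta$ and acts on functions of $r$ alone as $D_1=-\partial_r^2$, the Dirichlet heat flow of $D_2$ from the rotationally invariant datum $1$ is the Dirichlet heat flow of $D_1$ on $[0,1]$, extended constantly in $\theta$. The Riemannian volume of the warped metric being $e^{\alpha(r)}\,dr\,d\theta$, taking the specific heat equal to $e^{-\alpha}$ cancels the conformal Jacobian and gives
$$\beta\bigl(1,e^{-\alpha},D_2,\BB^-\bigr)(t)=2\pi\int_0^1\bigl(e^{-tD_{1,\BB^-}}1\bigr)(r)\,dr .$$
The integral on the right is the heat content of the flat unit interval with constant initial temperature and specific heat; by the method of images its solution is, up to terms exponentially small in $t^{-1}$, the superposition of the two half-line complementary error functions based at $r=0$ and $r=1$, so the expansion is $1-\tfrac{4}{\sqrt\pi}t^{1/2}+O\bigl(e^{-1/(4t)}\bigr)$. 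Hence $\beta_0^{\partial M_2}(1,e^{-\alpha},D_2,\BB^-)$ is the value forced by Lemma~\ref{lem-5.1}\,(1), while $\beta_\ell^{\partial M_2}(1,e^{-\alpha},D_2,\BB^-)=0$ for every $\ell\ge1$.

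For Assertion (2) I would substitute $\phi_1=1$, $\phi_2=e^{-\alpha}$ and $D=D_2$ into the universal expression for $\beta_\ell^{\partial M}(\phi_1,\phi_2,D,\BB^-)$ displayed just before Lemma~\ref{lem-5.2}, using the coefficients already determined: $c_{\ell,1}^-=\Xi_\ell$, $c_{\ell,2}^-=0$, $e_{\ell,1}^-=\ell\,\Xi_\ell$, $e_{\ell,3}^-=0$. On $\partial M_2$ the factor $\alpha$ vanishes, so $\phi_1\phi_2=1$ there; and since a normal covariant derivative of a function is just $\partial_r$, the relevant normal jets have leading parts $\phi_1^{(j)}=(\nabla_\nu)^j1=-\tfrac12\alpha^{(j)}$, $\phi_2^{(j)}=(\nabla_\nu^*)^j e^{-\alpha}=-\tfrac12\alpha^{(j)}$, $E^{(\ell-2)}=\tfrac12\alpha^{(\ell)}$ and $\rho_{mm}^{(\ell-2)}=-\alpha^{(\ell)}$, the omitted remainders involving only lower normal derivatives of $\alpha$. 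In the universal formula the top normal derivative $\alpha^{(\ell)}$ occurs only through $\phi_i^{(\ell)}$, $E^{(\ell-2)}$ and $\rho_{mm}^{(\ell-2)}$ --- every unwritten ``lower order'' monomial involves only $L_{ab}\sim\alpha'$, the (vanishing) intrinsic curvature of $S^1$, or products of jets of these quantities of total order $\le\ell-2$ --- so the coefficient of $\alpha^{(\ell)}$ in the $\partial M_2$ integrand equals $-c_{\ell,1}^-+\tfrac12 e_{\ell,1}^-\!-r_\ell^-=-\Xi_\ell+\tfrac12\ell\,\Xi_\ell-r_\ell^-$. By Assertion (1) this must vanish identically in $\alpha$, whence $r_\ell^-=\tfrac12\ell\,\Xi_\ell-\Xi_\ell=\tfrac12(\ell-2)\Xi_\ell$.

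The step I expect to be the main obstacle is the last claim: one must be certain that, in whatever reduced normal form the coefficients $r_\ell^-,\dots$ are named, the top normal derivative of the conformal factor is carried only by $\phi_i^{(\ell)}$, $E^{(\ell-2)}$ and $\rho_{mm}^{(\ell-2)}$. In particular $L_{aa}^{(\ell-1)}$, which on the warped metric also equals $-\alpha^{(\ell)}$ modulo lower order, must already have been traded for $\rho_{mm}^{(\ell-2)}$ via the Riccati identity expressing $\partial_r L_{ab}$ through $R_{ambm}$ and $L_{ac}L^c{}_b$, so that no hidden contribution to the $\alpha^{(\ell)}$-coefficient is overlooked. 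This is the same weight-and-divergence bookkeeping used to pin down $e_{\ell,1}^-$, $e_{\ell,2}^-$ and the Neumann coefficients in the preceding lemmas; granting it, the verification of $E$ and of $\rho_{mm}$ for the warped metric is routine.
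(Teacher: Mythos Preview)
Your approach is essentially identical to the paper's: for Assertion~(1) you reduce to the flat one-dimensional heat content via the separation-of-variables identity $\beta_{M_2}(1,e^{-\alpha},D_2,\BB^-)=2\pi\,\beta_{M_1}(1,1,D_1,\BB^-)$, and for Assertion~(2) you extract the coefficient of $\alpha^{(\ell)}$ in the universal boundary formula and set it to zero. One minor inconsistency in your write-up: you say ``a normal covariant derivative of a function is just $\partial_r$'' yet then (correctly) compute $\phi_1^{(j)}=(\nabla_\nu)^j1=-\tfrac12\alpha^{(j)}$ using the auxiliary connection $\nabla_r=\partial_r+\omega_r=\partial_r-\tfrac12\alpha'$; the paper instead records $\phi_1^{(\ell)}=0$, $\phi_2^{(\ell)}=-\alpha^{(\ell)}$ as if using bare $\partial_r$, but since only the symmetric sum $\phi_1^{(\ell)}\phi_2+\phi_1\phi_2^{(\ell)}$ enters with coefficient $c_{\ell,1}^-$, both bookkeepings give $-\Xi_\ell\alpha^{(\ell)}$ and the same value of $r_\ell^-$.
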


\goodbreak\begin{proof} We follow the treatment in \cite{BeDeGi93} to prove Assertion (1). We consider the function
$u(r,t)=e^{-tD_{1,\BB^-}}1$. This solves the equations

$$(\partial_t+D_1)u=0,\quad\lim_{t\rightarrow0}u(\cdot,t)=1\text{ in }L^2(M_1),\quad
\BB^-u=0\,.$$
Since $u$ also solves the equations
$$(\partial_t+D_2)u=0,\quad\lim_{t\rightarrow0}u(\cdot,t)=1\text{
in }L^2(M_2),\quad \BB^-u=0\,,
$$
we also have that $u(\cdot,t)=e^{-tD_{2,\BB}}1$ as well. Since
$\operatorname{dvol}_{M_2}=e^\alpha drd\theta$,
\begin{eqnarray*}
\beta_{M_2}(1,e^{-\alpha},D_2,\BB^-)(t)&=&\int_{r=0}^1\int_{\theta=0}^{2\pi}u(r,t)e^{-\alpha(r)}e^{\alpha(r)}d\theta dr\\\
&=&2\pi\int_{r=0}^1u(r,t)dr=2\pi\beta_{M_1}(1,1,D_1,\BB^-)(t)\,.
\end{eqnarray*}
Since the structures are flat on $M_1$, $\beta_{\ell}^{\partial M_1}(1,1,D_1,\BB^-)=0$ for $\ell>0$ and $\Delta_{M_1}^k1=0$. We
equate terms in the asymptotic expansion to see $\beta_\ell^{\partial M_2}(1,e^{-\alpha(r)},D_2,\BB^-)=0$ for $\ell>0$ as well.

We apply Assertion (1). We use the formalism of Equation (\ref{eqn-1.e}). We have $ds_{M_2}^2=dr^2+e^{2\alpha(r)}d\theta^2$
where
$\alpha(0)=0$ and $\alpha(r)=0$ near $\alpha=1$. We compute:
$$
\begin{array}{ll}
\Gamma_{122}=\Gamma_{212}=-\Gamma_{221}=e^{2\alpha}\alpha^{(1)},&\omega_1=\frac12e^{-2\alpha}\Gamma_{221}=-\frac12\alpha^{(1)},\\
\omega_2=0,&E^{(\ell-2)}=\frac12\alpha^{(\ell)}+...,\vphantom{\vrule height 12pt}\\
\phi_1^{(\ell)}=0+...,&\phi_2^{(\ell)}=-\alpha^{(\ell)}+...,\vphantom{\vrule height 12pt}\\
\rho_{mm}^{(\ell-2)}=-\alpha^{(\ell)}+....\vphantom{\vrule height 12pt}
\end{array}$$
We examine the coefficient of $\alpha^{(\ell)}$ in $\beta_\ell$ for $\ell$ even:
\begin{eqnarray*}
&&\textstyle c_{\ell,1}^-\phi_1\phi_2^{(\ell)}=-\Xi_\ell\alpha^{(\ell)}+...,\\
&&\textstyle e_{\ell,1}^-\phi_1\phi_2E^{(\ell-2)}=\frac12\ell\cdot\Xi_\ell\alpha^{(\ell)}+....,\\
&&\textstyle r_\ell^-\phi_1\phi_2\rho_{mm}^{(\ell-2)}=-r_\ell^-\alpha^{(\ell)}+...\,.
\end{eqnarray*}
It now follows from Assertion (1) that $r_\ell^-=\frac12(\ell-2)\Xi_\ell$. This completes the proof of
Lemma~\ref{lem-5.4} and thereby completes the proof of Theorem~\ref{thm-1.9} as well.
\end{proof}

\section{Estimating the heat trace asymptotics on a closed manifold}\label{sect-6}

In this section, we shall prove Theorem~\ref{thm-1.2}. 
We shall proceed purely formally and shall use the discussion in Sections
1.7-1.8 of
\cite{G94} (which is based on the Seeley calculus \cite{Se68b,Se69a})  to justify our formal procedures.
As in Equation (\ref{eqn-1.b}), let
$$
D=
-g^{ij}\partial_{x_i}\partial_{x_j}-A^k\partial_{x_k}-B
$$
be an operator of Laplace type. Throughout this section,
$C=C(M,g,D)$ will denote a generic constant which depends only on
$(M,g,D)$ (and hence also implicitly on $m$) but not on
$n$; $c(m)$ will denote a generic constant which only depends on $m$. If we take $D=\Delta_g$, then $C=C(M,g)$.

We introduce coordinates $\xi=(\xi_1,...,\xi_m)$ on the cotangent bundle to express a covector
in the form $\xi=\xi_idx^i$. The symbol of $D$ is
$p_2(x,\xi)+p_1(x,\xi)+p_0(x)$ where:
$$p_2(x,\xi):=g^{ij}(x)\xi_i\xi_j,\quad p_1(x,\xi):=A^k(x)\xi_k,\quad\text{and}\quad p_0=B\,.$$
There are suitable normalizing constants involving factors of $\sqrt{-1}$ which we ignore in the interests of simplicity
 henceforth since they
play no role in the estimates we shall be deriving. Let $\mathcal{C}:=\mathbb{C}-[0,\infty)$ be the slit complex plane and let
$\lambda\in\mathcal{C}$. Following the discussion in Lemma 1.7.2 of \cite{G94}, one defines inductively:
\begin{equation}\label{eqn-6.a}
\begin{array}{l}
r_0(x,\xi,\lambda):=(|\xi|^2-\lambda)^{-1},\\
r_n(x,\xi,\lambda):=-r_0(x,\xi,\lambda)\cdot
\displaystyle\sum_{|\alpha|+j+2-k=n,j<n}d_\xi^\alpha p_k(x,\xi)\cdot d_x^\alpha r_j(x,\xi,\lambda)/\alpha!\,.
\end{array}\end{equation}
In this sum $k=0,1,2$ and $|\alpha|\le2-k$. The symbol of
$e^{-tD}$ is given by:
$$e_0(x,\xi,t)+...+e_n(x,\xi,t)+...$$ where, following Equation (1.8.4) of \cite{G94}, one sets:
$$
e_n(x,\xi,t):=\frac1{2\pi\sqrt{-1}}\int_\gamma e^{-t\lambda}{r_n}(x,\xi,\lambda)d\lambda\,;
$$
here $\gamma$ is a suitable contour about the positive real axis in the complex plane. Then, following Equation (1.8.3) of
\cite{G94}, one may obtain the local heat trace invariants of Equation (\ref{eqn-1.c}) by setting:
\begin{equation}\label{eqn-6.b}
a_n(x,D)=\left(\sqrt{\det(g_{ij})}\right)^{-1}\int_{\mathbb{R}^m} e_n(x,\xi,1)d\xi\,.
\end{equation}

To measure the degree of an expression in the derivatives of the symbol, we set:
$$
\operatorname{degree}(d_x^\alpha g^{ij})=|\alpha|,\quad
\operatorname{degree}(d_x^\alpha A^k)=|\alpha|+1,\quad
\operatorname{degree}(d_x^\alpha B)=|\alpha|+2\,.
$$
Note that if $D$ is the scalar Laplacian, then $B=0$ and $A^k=g^{-1}\partial_{x_i}g^{ij}g$ has degree 1 in the
derivatives of the metric so this present definition is consistent with our previous definition in this special case. It
is immediate from the definition that
$r_0$ is of total degree
$0$ in the jets of the symbol of
$D$. Furthermore, since 
$$\operatorname{degree}(d_\xi^\alpha p_k)=2-k\quad\text{and}\quad\operatorname{degree}(d_x^\alpha
r_j)=|\alpha|+\operatorname{degree}(r_j)\,,$$
 we have by induction that
\begin{equation}\label{eqn-6.c}
\operatorname{degree}(r_n)=n\,.
\end{equation}
 There is a similar grading on the variables $(\xi,\lambda)$. One defines:
$$
\operatorname{weight}(\xi_i)=1\quad\text{and}\quad
\operatorname{weight}(\lambda)=2\,.
$$
It is then immediate that $r_0$ has weight $-2$ in $(\xi,\lambda)$. 
Clearly 
$$\operatorname{weight}(d_\xi^\alpha p_k)=k-|\alpha|\quad\text{and}\quad\operatorname{weight}(d_x^\alpha r_j)=\operatorname{weight}(r_j)\,.$$
Thus it then also follows
by induction from Equation (\ref{eqn-6.a}) that
\begin{equation}\label{eqn-6.d}
\operatorname{weight}(r_n)=-2-n\,.
\end{equation}
Let $n$ be odd. Since  the weight of $r_n(x,\xi,\lambda)$ is $-n-2$ in $(\xi,\lambda)$, it follows that $e_n(x,\xi,1)$ is an odd
function of $\xi$ and hence the integral in Equation (\ref{eqn-6.b}) vanishes in this instance. This yields $a_n(x,D)=0$ for
$n$ odd. Let $[\cdot]$ be the greatest integer function.

\goodbreak\goodbreak\begin{lemma}\label{lem-6.1}
\  
\begin{enumerate}
\item We may expand $r_n$ in the form:
$$\displaystyle r_n(x,\xi,\lambda)=\sum_{j=[\frac12n]+1}^{2n+1}\ \ \sum_{|\beta|=2j-n-2} q_{n,m,j,\beta}(x,g)\xi^\beta
r_0^j(x,\xi,\lambda)\,.$$
\item There exists a constant $C(M,g)$ so that if $n=2\bar n>0$ and if $|\beta|=2j-n-2$, then
$$\displaystyle\left|\int_{\mathbb{R}^m}\int_\gamma e^{-\lambda}r_0^j(x,\xi,\lambda)\xi^\beta d\lambda
d\xi\right|\le \frac{C(M,g)}{\bar n!}^n\,.$$
\end{enumerate}
\end{lemma}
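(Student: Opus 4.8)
The plan is to prove the two assertions separately: Assertion~(1) by induction on $n$ using the recursion~(\ref{eqn-6.a}), and Assertion~(2) by a residue evaluation of the $\lambda$-integral followed by elementary Gaussian-moment estimates.

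\emph{Assertion (1).} I would induct on $n$. The case $n=0$ is the definition, $r_0=r_0^1$, which matches $j=1=[\tfrac12\cdot0]+1$ and $|\beta|=0=2\cdot1-0-2$. For the inductive step one substitutes the inductive description of each $r_\mu$ with $\mu<n$ into~(\ref{eqn-6.a}) and tracks the effect of the three operations occurring there on a single monomial $q(x,g)\,\xi^{\beta'}r_0^{p'}$. First, $d_x^\alpha$ applied to such a monomial produces a sum of monomials whose $r_0$-power exceeds $p'$ by some $p$ with $0\le p\le|\alpha|$, the extra $\xi$-degree being exactly $2p$: indeed $d_xr_0=-r_0^2\,d_xp_2$ and every $x$-derivative of $p_2=g^{ij}(x)\xi_i\xi_j$ is homogeneous of degree $2$ in $\xi$, so each step that raises the $r_0$-power by one contributes one such degree-$2$ factor. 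Second, multiplying by $d_\xi^\alpha p_k$ raises the $\xi$-degree by $k-|\alpha|$ and changes only the coefficient function of $x$. Third, the overall prefactor $-r_0$ raises the $r_0$-power by one. Hence every term of $r_n$ has the claimed shape $q_{n,m,j,\beta}(x,g)\,\xi^\beta r_0^j$. The identity $|\beta|=2j-n-2$ holds term by term: combining the three degree shifts with the summation constraint $|\alpha|+\mu+2-k=n$ gives $\xi$-degree $=|\beta'|+2p+(k-|\alpha|)=(2p'-\mu-2)+2p+(k-|\alpha|)=2(p'+p+1)-n-2=2j-n-2$; equivalently, this is the statement that $\xi^\beta r_0^j$ carries weight $-n-2$ in $(\xi,\lambda)$, which is already forced by~(\ref{eqn-6.d}). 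Finally the bound $[\tfrac12 n]+1\le j\le 2n+1$ for the exponent $j=p'+p+1$ follows from the inductive bounds $[\tfrac12\mu]+1\le p'\le 2\mu+1$, the relation $\mu=n-2+k-|\alpha|$, and the admissibility conditions $k\in\{0,1,2\}$, $|\alpha|\le 2-k$, $\mu<n$; alternatively the lower bound is exactly what is needed for the $\lambda$-contour integral defining $e_n$, and then the $\xi$-integral~(\ref{eqn-6.b}) defining $a_n(x,D)$, to converge.

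\emph{Assertion (2).} Since $r_0=(|\xi|^2-\lambda)^{-1}=(g^{kl}(x)\xi_k\xi_l-\lambda)^{-1}$ has a single pole, of order $j$, at $\lambda=|\xi|^2\ge0$, which is encircled by $\gamma$, the residue theorem gives $\int_\gamma e^{-\lambda}r_0^j(x,\xi,\lambda)\,d\lambda=\pm\frac{2\pi i}{(j-1)!}\,e^{-|\xi|^2}$, so the quantity to estimate is $\frac{2\pi}{(j-1)!}\bigl|\int_{\mathbb{R}^m}\xi^\beta e^{-g^{kl}(x)\xi_k\xi_l}\,d\xi\bigr|$. Diagonalising the positive definite form $g^{kl}(x)\xi_k\xi_l$ and rescaling the variables reduces the $\xi$-integral to $\det(g^{kl}(x))^{-1/2}$ times a finite sum, with at most $c(m)^{|\beta|}$ summands and with coefficients bounded by suitable powers of the eigenvalues of $(g^{kl}(x))$, of one-dimensional Gaussian moments $\int_{\mathbb{R}}\eta^{2p}e^{-\eta^2}d\eta=\sqrt\pi\,(2p-1)!!/2^p\le\sqrt\pi\,p!$. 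Since $(M,g)$ is compact, the relevant eigenvalues and $\det g$ are uniformly bounded above and below, so $\bigl|\int_{\mathbb{R}^m}\xi^\beta e^{-|\xi|^2}d\xi\bigr|\le A(M,g)\,B(M,g)^{|\beta|}\,(|\beta|/2)!$ for constants depending only on $(M,g)$ and $m$; since $|\beta|=2j-n-2\le 2(2n+1)-n-2=3n$ by Assertion~(1), this is $\le A(M,g)\,B(M,g)^{3n}\,(|\beta|/2)!$. Finally, with $n=2\bar n$ one has $|\beta|/2=j-\bar n-1$, hence $j-1=(|\beta|/2)+\bar n$ and therefore $\frac{(|\beta|/2)!}{(j-1)!}=\frac{(|\beta|/2)!}{\bigl((|\beta|/2)+\bar n\bigr)!}\le\frac1{\bar n!}$. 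Multiplying through yields the bound $C(M,g)^n/\bar n!$ after enlarging the constant. (The $(M,g)$-dependence enters only through $\sup|g^{kl}|$, the ellipticity constant, and $\det g$; in geodesic normal coordinates one may in fact take $C=C(m)$.)

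The step I expect to be the main obstacle is Assertion~(1), and within it the two-sided bound $[\tfrac12 n]+1\le j\le 2n+1$ on the $r_0$-exponent: one must carry the bookkeeping of $\xi$-degree versus $r_0$-power through the recursion~(\ref{eqn-6.a}) while respecting the admissible index ranges and the weight grading~(\ref{eqn-6.d}). Once the structural form in Assertion~(1) is established, Assertion~(2) is the routine residue computation above together with the elementary comparison of $(|\beta|/2)!$ with $(j-1)!$.
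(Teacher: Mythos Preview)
Your proposal is correct and follows essentially the same route as the paper. For Assertion~(1) the paper argues exactly via the recursion~(\ref{eqn-6.a}) together with the degree and weight gradings~(\ref{eqn-6.c}),~(\ref{eqn-6.d}); in particular the lower bound $j\ge[\tfrac12 n]+1$ that you flag as the main obstacle is obtained in one line from $|\beta|=2j-n-2\ge0$, so no separate inductive bookkeeping is needed. For Assertion~(2) the paper also evaluates the $\lambda$-integral by Cauchy/residue to get $\tfrac{1}{(j-1)!}e^{-|\xi|^2}$ and then bounds the Gaussian integral; the only cosmetic difference is that the paper passes to spherical coordinates using $|\xi|^2\ge\varepsilon|\xi|_e^2$ and $|\xi^\beta|\le|\xi|_e^{|\beta|}$ rather than diagonalising, arriving at the same comparison $\tfrac{(|\beta|/2+O(1))!}{(j-1)!}\le C^n/\bar n!$.
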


\goodbreak\begin{proof}
 We apply the recursive scheme of Equation (\ref{eqn-6.a}) to obtain an expression for $r_n$ of the form given in Assertion (1).
By Equation (\ref{eqn-6.c}), $r_n$ has degree $n$ in the derivatives of the symbol of $D$. Thus there are at most $n$
$x$-derivatives of
$r_0$ which are involved in the process.  Each $x$-derivative of $r_0$ adds one power of
$r_0$ (other variables can be differentiated as well of course so we are obtaining an upper bound not a sharp estimate).  Each
step in the induction process adds 1 power of
$r_0$. Thus
$j\le2n+1$. By Equation (\ref {eqn-6.d}),
$r_n$ is homogeneous of weight $-n-2$ in $(\xi,\lambda)$. Since
$|\beta|-2j=-n-2$ and $|\beta|\ge0$, we may conclude that $j\ge 1+\frac12n\ge[\frac12n]+1$. Assertion (1) now follows.

We use the Cauchy integral formula to estimate:
$$
\left|\int_{\mathbb{R}^m}\int_\gamma e^{-\lambda}(|\xi|^2-\lambda)^{-j}\xi^\beta d\lambda d\xi\right|
\le\frac1{(j-1)!}\left|\int_{\mathbb{R}^m}e^{-|\xi|^2}\xi^\beta d\xi\right|\,.$$
The quadratic form $g^{ij}$ is positive definite. Thus we may estimate $|\xi|^2\ge\varepsilon|\xi|_e^2$ for
some $\varepsilon=\varepsilon(M,g)>0$ where $|\xi|_e^2=\xi_1^2+...+\xi_m^2$ is the usual Euclidean length. Note that
$|\xi^\beta|\le|\xi|_e^{|\beta|}$. Since
$e^{-|\xi|^2}\le e^{-\varepsilon|\xi|_e^2}$, we may use spherical coordinates to estimate:
$$\left|\int_{\mathbb{R}^m}\int_\gamma e^{-\lambda}(|\xi|^2-\lambda)^{-j}\xi^\beta d\lambda
d\xi\right|\le\frac1{(j-1)!}\int_{r=0}^\infty e^{-\varepsilon
r^2}r^{|\beta|+m}dr\operatorname{vol}_{m-1}(S^{m-1},g_{S^{m-1}})\,.$$ Since $|\beta|\le 2j\le 4n+4$ is uniformly and linearly bounded in $n$,
we may rescale to remove $\varepsilon$ in $e^{-\varepsilon r^2}$ at the cost of introducing a suitable multiplicative
constant. We may then evaluate the integral to estimate:
$$
 \left|\int_{\mathbb{R}^m}\int_\gamma e^{-\lambda}(|\xi|^2-\lambda)^{-j}\xi^\beta d\lambda d\xi\right|\le
C(M,g)^n\frac{(\frac{|\beta|+m}2)!}{(j-1)!}\,.
$$ 
Since $j-1-\frac12|\beta|=\bar n$ the desired estimate follows; the shift by
$m$ can be absorbed into $C(M,g)^n$ since we have restricted to $n>0$.
\end{proof}

Let $D_\varepsilon^{\mathbb{C}}\subset\mathbb{C}^m$ be the complex polydisk
of radius
$\varepsilon$ of real dimension
$2m$ about the origin in
$\mathbb{C}^m$ given by setting:
$$
D_\varepsilon^{\mathbb{C}}:=\left\{\vec z=(z_1,...,z_m)\in\mathbb{C}^m:|z_i|\le\varepsilon\quad\text{for}\quad 1\le i\le
m\right\}\,.
$$
We  let $D_\varepsilon^{\mathbb{R}}=D_\varepsilon^{\mathbb{C}}\cap\mathbb{R}^m$ be the corresponding real polydisk. We
also consider the submanifold $S_\varepsilon$ of real dimension
$m$ in $\mathbb{C}^m$ (which is not the boundary either of the complex polydisk $D_\varepsilon^{\mathbb{C}}$ or of the real polydisk
$D_\varepsilon^{\mathbb{R}}$) given by:
$$S_\varepsilon:=\{\vec z\in\mathbb{C}^m:|z_i|=\varepsilon\quad\text{for}\quad 1\le i\le m\}\,.$$
We consider the holomorphic $m$-form
$$dw=(2\pi\sqrt{-1})^{-m}dw_1...dw_m\,.$$
Let $f$ be a holomorphic function on the interior of $D_\varepsilon^{\mathbb{C}}$ which extends continuously to all of
$D_\varepsilon^{\mathbb{C}}$ and let $\alpha$ is a multi-index. If
$z$ belongs to the interior of the polydisk $D_\varepsilon^{\mathbb{C}}$, then we shall define:
$$
\mathcal{I}_\alpha(f)(z):=\int_{w\in S_\varepsilon}f(w)(w_1-z_1)^{-1-\alpha_1}...(w_m-z_m)^{-1-\alpha_m}dw\,.
$$
We may then use the Cauchy integral formula to represent:
$$
\partial_z^\alpha f(z)=\alpha!\mathcal{I}_\alpha(f)\quad\text{for}\quad
z\in\operatorname{int}(D_\varepsilon^{\mathbb{C}})\,.
$$
Let $\beta=\beta(i,\alpha)$ be the multi-index $(\alpha_1,...,\alpha_{i-1},\alpha_i+1,\alpha_{i+1},...,\alpha_m)$. We
then have:
\begin{equation}\label{eqn-6.e}
\partial_{x_i}\mathcal{I}_\alpha(f)(x)=(\alpha_i+1)\cdot\mathcal{I}_{\beta}(f)(x)\,.
\end{equation}

We introduce variables $\{f_\nu\}$ for the $\{g^{ij},A^k,B\}$ variables; we have a total of $\frac12m(m-1)+m+1$ such
variables. Since we are in the real analytic setting, we can choose real analytic coordinates about each point $P$ of $M$
which are real analytically equivalent to the polydisk
$D_2^{\mathbb{R}}(P)$ of radius 2 in such a way that the variables $\{f_\nu\}$ extend continuously to $D_2^{\mathbb{C}}(P)$
with $f_\nu$ holomorphic on the interior of $D_2^{\mathbb{C}}(P)$. The functions $|f_\nu|$ are uniformly bounded on
$D_2^{\mathbb{C}}(P)$. If
$z\in D_1^{\mathbb{R}}(P)$ and
$|w|\in S_2^{\mathbb{C}}(P)$, then $|z_i-w_i|\ge1$ and thus we have uniform estimates
\begin{equation}\label{eqn-6.f}
|\mathcal{I}_\alpha(f_\nu)(z)|\le C(M,D)\quad\text{for any}\quad\nu,\alpha\,.
\end{equation}

 We decompose
$r_n$ in terms of monomials of the form
\begin{equation}\label{eqn-6.g}
r_0^j\xi^\beta\cdot g^{i_1j_1}\cdot...\cdot g^{i_aj_a}\cdot I_{\alpha_1}(f_{\nu_1})\cdot
...\cdot I_{\alpha_b}(f_{\nu_b})\,.
\end{equation}
Here we assume $\operatorname{degree}\{\partial_\alpha^xf_{\nu_i}\}>0$ since we have made explicit the dependence on the
variables of degree 0. Thus
$b\le n$ since, by Equation (\ref{eqn-6.c}),
$r_n$ is homogeneous of degree
$n$ in the jets of the symbol. There are no $g^{ij}$ variables in $r_0$. Each multiplication by $\partial_\xi^\alpha p_2$
can add at most one $g^{ij}$ variable; each multiplication by $\partial_{\xi_i}^\alpha p_1$ or $p_0$ adds no $g^{ij}$
variable. Each application of $\partial_x^\alpha$ to $ r_j$ does not add a $g^{ij}$ variable (and can in fact reduce the
number of $g^{ij}$ variables if they are differentiated). Thus the number of $g^{ij}$ variables is at most $n$. Thus in
considering monomials of the form given in Equation (\ref{eqn-6.g}), we may assume $a\le n$. We summarize these constraints:
\begin{equation}\label{eqn-6.h}
j\le 2n+1,\quad -n-2=|\beta|-2j,\quad a\le n,\quad\text{and}\quad b\le n\,.
\end{equation}

\goodbreak\begin{lemma} Let $c(m):=50m^2$. We can decompose $r_n$ as the sum of at most $c(m)^n n!$ monomials of the form given
in  {\rm Equation (\ref{eqn-6.g})} satisfying the constraints of {\rm Equation (\ref{eqn-6.h})} where the coefficient of each
monomial has absolute value at most 1.
\end{lemma}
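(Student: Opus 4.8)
The plan is a single induction on $n$ that simultaneously controls three things: the number of monomials in the decomposition, the fact that every coefficient equals $\pm1$ (hence has absolute value at most $1$), and the structural constraints (\ref{eqn-6.h}). Write $N(n)$ for the number of monomials of the form (\ref{eqn-6.g}) in the decomposition we build for $r_n$. The base cases are direct: $r_0=(|\xi|^2-\lambda)^{-1}$ is one monomial of coefficient $1$, and (\ref{eqn-6.a}) gives $r_1=-r_0^2\sum_k A^k\xi_k$, which is $m$ monomials of coefficient $\pm1$. For the inductive step I would first observe that in (\ref{eqn-6.a}) the condition $j<n$, together with $|\alpha|\le 2-k$, rules out $k=2$, so the leading symbol $p_2$ never occurs in the sum: only $p_1$ (with $\partial_\xi^0$ or one $\partial_{\xi_l}$) and $p_0$ contribute, and for each the relevant $\partial_\xi^\alpha p_k$ is, after distributing the sums $g^{ij}\xi_i\xi_j$, $A^k\xi_k$, $B$, a sum of at most $m$ elementary terms of the form $(\mathrm{constant})\cdot(\text{one symbol variable})\cdot\xi^{(\cdots)}$ with coefficients $\pm1$. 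Thus $r_n$ is $-r_0$ times a sum of such elementary symbol--monomials, each multiplied by a term $\partial_x^\alpha r_j$ with $j<n$ and $|\alpha|\le1$, to which the inductive hypothesis applies.

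The core estimate is the effect of one $x$-derivative on a monomial $M=r_0^p\,\xi^\beta\,g^{i_1j_1}\cdots g^{i_aj_a}\,\mathcal{I}_{\gamma_1}(f_{\nu_1})\cdots\mathcal{I}_{\gamma_b}(f_{\nu_b})$ of $r_j$. By the Leibniz rule $\partial_{x_l}M$ picks up: from $\partial_{x_l}(r_0^p)=-p\,r_0^{p+1}\sum_{ij}(\partial_{x_l}g^{ij})\xi_i\xi_j$, using $\partial_{x_l}g^{ij}=\mathcal{I}_{e_l}(g^{ij})$ (the Cauchy representation $\partial_z^\alpha f=\alpha!\,\mathcal{I}_\alpha(f)$, with $e_l$ the $l$-th unit multi-index), at most $p\,m^2$ monomials once the integer $p$ is split into $p$ unit-coefficient copies; from differentiating one of the $a$ undifferentiated factors $g^{i_sj_s}$, one monomial apiece; and from differentiating $\mathcal{I}_{\gamma_t}(f_{\nu_t})$, exactly $\gamma_{t,l}+1$ monomials by (\ref{eqn-6.e}) (here $\gamma_{t,l}$ is the $l$-th component of $\gamma_t$), again after splitting. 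The constraints (\ref{eqn-6.h}) for monomials of $r_j$ give $p\le 2j+1$, $a\le j$, $b\le j$ and $\sum_t|\gamma_t|\le j$, so a single $x$-derivative of one monomial of $r_j$ produces at most $(2j+1)m^2+3j\le 5(j+1)m^2$ monomials; writing $T(j)$ for this worst case, $T(j)\le 5(j+1)m^2$ (and applying no derivative yields one monomial). Each resulting monomial again has the form (\ref{eqn-6.g}) (a differentiated factor $\mathcal{I}_{\gamma_t+e_l}$ or a new factor $\mathcal{I}_{e_l}(g^{i_sj_s})$ still has positive degree), by the degree and weight homogeneities (\ref{eqn-6.c})--(\ref{eqn-6.d}) it still obeys (\ref{eqn-6.h}), and it still has coefficient $\pm1$.

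Assembling these: the $(k,|\alpha|)=(1,0)$ term of (\ref{eqn-6.a}) contributes $\le m\,N(n-1)$ monomials, the $m$ terms with $k=1$, $|\alpha|=1$ contribute $\le m\,N(n-2)\,T(n-2)$, and the $(0,0)$ term contributes $\le N(n-2)$, so
\[
N(n)\ \le\ m\,N(n-1)+\big(m\,T(n-2)+1\big)\,N(n-2)\ \le\ m\,N(n-1)+6(n-1)m^3\,N(n-2).
\]
Now take $c(m)=50m^2$ and assume inductively $N(n-1)\le c(m)^{n-1}(n-1)!$ and $N(n-2)\le c(m)^{n-2}(n-2)!$. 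The right-hand side is $c(m)^{n-2}(n-2)!\,(n-1)\,[\,m\,c(m)+6m^3\,]$, and since $m\,c(m)+6m^3=56m^3\le 2500m^4=c(m)^2\le c(m)^2\,n$ we obtain $N(n)\le c(m)^n\,n(n-1)(n-2)!=c(m)^n\,n!$. With the base cases $N(0)=1$ and $N(1)=m\le 50m^2$, this yields $N(n)\le c(m)^n\,n!$ for all $n$, with every coefficient $\pm1$ and every monomial obeying (\ref{eqn-6.h}), which is the assertion.

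The step I expect to be the main obstacle is precisely the bookkeeping behind the coefficient-$\pm1$ normalization. Each $x$-differentiation of a power $r_0^p$, or of a factor $\mathcal{I}_{\gamma_t}(f_{\nu_t})$, brings down an integer ($p\le 2j+1$, respectively $\gamma_{t,l}+1\le j+1$) that must be split into that many unit-coefficient copies, and one has to check that these multipliers --- large, but only linear in the \emph{current} index $j$ --- compound over the $O(n)$ levels of the recursion (each descending by at most $2$) to no more than the factorial $n!$. This is exactly why the uniform bound $p\le 2j+1$ from (\ref{eqn-6.h}) (rather than the a priori weaker $p\le 2n+1$) is needed: it keeps $T(j)$ linear in $j$, hence keeps the coefficient of $N(n-1)$ in the recursion bounded --- here even constant --- in $n$, which is what makes the bound $c(m)^n\,n!$, rather than something like $c(m)^n(n!)^2$, attainable.
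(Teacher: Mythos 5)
The proof as written has a significant gap: you took the constraint ``$|\alpha|\le 2-k$'' in Equation~(\ref{eqn-6.a}) at face value and concluded that $k=2$ is ruled out, so the principal symbol $p_2=g^{ij}\xi_i\xi_j$ never enters the recursion. That constraint is evidently a typo (it should be $|\alpha|\le k$: since $p_k$ is a polynomial of degree $k$ in $\xi$, $\partial_\xi^\alpha p_k$ vanishes once $|\alpha|>k$), and the paper's own proof of this lemma confirms this by treating five cases, the first two of which are exactly the $p_2$ contributions $-r_0\,\partial_{\xi_k}p_2\cdot\partial_{x_k}r_{n-1}$ (that is $k=2$, $|\alpha|=1$) and $-r_0\,\partial_{\xi_{k_1}}\partial_{\xi_{k_2}}p_2\cdot\partial_{x_{k_1}}\partial_{x_{k_2}}r_{n-2}$ (that is $k=2$, $|\alpha|=2$) that you omit. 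These are not negligible: the $|\alpha|=2$ term applies two $x$-derivatives to $r_{n-2}$ and is the combinatorially heaviest piece, and without the $p_2$ terms the recursion would not produce a parametrix for the resolvent of a second-order operator, so the $r_n$ would not be the objects used in Equation~(\ref{eqn-6.b}).

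The remainder of your argument is consistent with the paper's: packaging everything into a count $N(n)$, the Leibniz estimate $T(j)\le 5(j+1)m^2$ keyed on $p\le 2j+1$, $a\le j$, and $b+\sum_t|\gamma_t|\le 2j$ from (\ref{eqn-6.h}), and the splitting of integer prefactors like $p$ and $\gamma_{t,l}+1$ into unit-coefficient copies are all the same ingredients the paper uses; your reduction to a two-term linear recursion is a tidy way to organize them. If you add the two missing $p_2$ cases --- contributing an $O(n\,m^2)$ multiple of $N(n-1)$ and an $O(n^2 m^2)$ multiple of $N(n-2)$, exactly as in the paper's cases (1) and (2) --- the same induction closes against $c(m)^n n!$ for a suitable $c(m)$. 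As it stands, though, the proof does not establish the lemma because two of the five contributions to $r_n$ are unaccounted for.
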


\goodbreak\begin{proof} Since $r_0$ can be written as a single monomial with coefficient $1$, we proceed by induction.
\begin{enumerate}
\item Consider $-r_0\partial_{\xi_k}p_2\cdot\partial_{x_k}r_{n-1}$. Each $k$ generates $m$ terms so there are $m^2$ terms
generated in this way. Differentiating $r_0^j$ generates at most $3n$ terms since $j\le 3n$ by Equation (\ref{eqn-6.h}).
Differentiating the $g^{ij}$ variables generates at most $n$ terms since $a\le n$. Differentiating the $\mathcal{I}$
variables generates at most $b+\sum|\alpha_i|\le2n$ terms by Equation (\ref{eqn-6.e}). Thus we generate at most
$m^2(3n+n+2n)=6m^2n$ terms from each monomial of $r_{n-1}$. This can be written in terms of at most
\medbreak $6m^2 n\cdot c(m)^{m-1}(n-1)!=6m^2c(m)^{m-1}n!$ monomials.
\medbreak\item Consider $-r_0\partial_{\xi_{k_1}}\partial_{\xi_{k_2}}p_2\cdot\partial_{x_{k_1}}\partial_{x_{k_2}}r_{n-2}$. A
similar argument shows this generates at most $m^2(6n)(6(n-1))$ new terms from each monomial of $r_{n-2}$. This can be
written in terms of at most
\medbreak $36m^2n(n-1)\cdot c(m)^{n-2}(n-2)!\le36m^2\cdot c(m)^{n-1}n!$ monomials.
\medbreak\item Consider $-r_0A^k\xi_k r_{n-1}$. This can be written in terms of at most
\medbreak$m\cdot c(m)^{m-1}(n-1)!\le m^2c(m)^{m-1}n!$ monomials.
\medbreak\item Consider $-r_0A^k\partial_{x_k}r_{n-2}$. This can be written in terms of at most
\medbreak$6mn\cdot c(m)^{n-2}(n-2)!\le 6m^2c(m)^{n-1}n!$ monomials.
\medbreak\item Consider $-r_0Br_{n-2}$. This can be written in terms of at most
$$c(m)^{n-2}(n-2)!\le m^2c(m)^{n-1}n!\quad\text{terms}.$$
\end{enumerate}
The above argument shows that $r_n$ can be decomposed as the sum of at most of $50m^2\cdot c(m)^{n-1}n!=c(m)^n\cdot
n!$ monomials each of which has a coefficient of absolute value at most $1$.
\end{proof}

\goodbreak\begin{proof}[Proof of Theorem~\ref{thm-1.2} (1)] We consider monomials where the coefficient has absolute value at most $1$.
We have shown that there exists a constant
$c(m)$ so that $r_n$ can be written in terms of at most $c(m)^nn!$ such monomials. We may then use
the constraints of Equation (\ref{eqn-6.h}), the estimates of Equation (\ref{eqn-6.f}), and the estimate of
Lemma~\ref{lem-6.1} to construct a new constant $\tilde C(M,g)$ and complete the proof of Theorem~\ref{thm-1.2} (1) by
bounding:
\medbreak
\qquad$|a_n(x,D)|\le c(m)^nn!\cdot C(M,g,D)^{2n}\cdot C(M,g)^n\frac1{\bar n!}\le\tilde C(M,g,D)^n\bar n!$.
\end{proof}

\goodbreak\begin{proof}[Proof of Theorem~\ref{thm-1.2} (2)] Let $P$ be a point of a closed real analytic
Riemannian manifold $(M,g)$. Let $f$ be a real analytic function on $M$ so that $df(P)\ne0$. Since
$f$ is continuous and $M$ is compact, $|f|$ is bounded. By rescaling and shifting $f$, we may suppose without loss of
generality that $f(P)=0$ and that $|f(x)|\le 1$ for all points $x$ of $M$. We make a real analytic change of
coordinates to assume that $g^{ij}(P)=\delta_{ij}$ and that $f(x)=c_f\cdot x_1$ near $P$. We shall choose $\varepsilon_k=\pm1$ recursively and define:
$$h(x)=\sum_{k=3}^\infty\varepsilon_k2^{-k}f(x)^{2k}\,.$$
This series converges uniformly in the real analytic topology so $h$ is real analytic. Let
$\mathcal{E}_{\bar n}(\cdot)$ be a generic invariant which only depends on the parameters indicated. Let $g_h=e^{2h}g$. 
Let $\bar n\ge3$. We use Theorem
\ref{thm-1.8} to see that:
\medbreak\qquad
$(\partial_{x_1}^{2\bar n}h)(P)=\varepsilon_{\bar n}2^{-\bar n}c_f^{2\bar n}(2\bar n)!
+\mathcal{E}_{\bar n}^1(\varepsilon_1,...,\varepsilon_{\bar n-1})$,
\medbreak\qquad
$\tau_{g_h}(P)=c_m(\partial_{x_1}^2h)(P)+\text{lower order terms}\quad\text{for some}\quad |c_m|\ge1$,
\medbreak\qquad
$(-1)^{\bar n-1}\Delta_{g_h}^{\bar n-1}\tau_{g_n}(P)=
\varepsilon_{\bar n}c_m2^{-\bar n}c_f^{2\bar n}(2\bar n)!
+\mathcal{E}_{\bar n}^2(\varepsilon_1,...,\varepsilon_{\bar n-1},g)$,
\medbreak\qquad
$a_{2\bar n}(P,\Delta_g)=(-1)^{\bar n-1}\frac{\bar n\cdot \bar n!}{(2\bar n+1)!}\Delta^{\bar n-1}\tau+\text{lower order
terms}$
\medbreak\qquad\qquad\qquad\quad
$=c_m\frac{\bar n\cdot \bar n!}{(2\bar n+1)!}c_f^{2\bar n}\varepsilon_{\bar n}2^{-\bar n}(2\bar
n)!+\mathcal{E}_{\bar n}^3(\varepsilon_1,...,\varepsilon_{\bar n-1},g)$.
\medbreak\noindent
We set
$$\varepsilon_{\bar n}:=\left\{\begin{array}{lll}
+1&\text{if}&c_m\mathcal{E}_{\bar n}^3(\varepsilon_1,...,\varepsilon_{\bar n-1},g)\ge0\\
-1&\text{if}&c_m\mathcal{E}_{\bar n}^3(\varepsilon_1,...,\varepsilon_{\bar n-1},g)<0\vphantom{\vrule height 12pt}
\end{array}\right\}\,.$$
With this choice of $\varepsilon_{\bar n}$, there is no cancellation. As $\frac12\frac{\bar n}{2\bar
n+1}\ge\frac3{14}$ for $\bar n\ge3$, we obtain the desired estimate:
\medbreak\hfill
$\left|a_{2\bar n}(P,\Delta_g)\right|\ge c_m\frac{\bar n\cdot \bar n!}{(2\bar n+1)!}c_f^{2\bar n}2^{-\bar n}(2\bar n)!
\ge\frac {\bar n}{2\bar n+1}c_f^{2\bar n}2^{-\bar n}\cdot \bar n!\ge\left(\frac3{14}c_f^2\right)^{\bar n}\bar n!$.
\hfill\vphantom{.}
\end{proof}

\section{Growth of heat content asymptotics}\label{sect-7}

This section is devoted to the proof of Theorem~\ref{thm-1.4}. We first examine a product manifold
$[0,1]\times N$. Let $\{\varepsilon_{\bar\ell}\}$ be a sequence of signs to be chosen recursively. We replace the
function $f(x)$ of the previous section by $\sin(x)$ and define:
$$
h(x):=\sum_{\nu=1}^\infty\varepsilon_\nu 2^{-\nu}\sin(x)^{2\nu}\,.
$$
This series converges in the real analytic topology to a real analytic function $h$ which is periodic with period $2\pi$
and which satisfies $h(0)=h(2\pi)=0$. We set
$$g_M:=e^{2h}(dx^2+g_N)\,.$$
The inward unit normal is given at $0$ by $\nu(0)=\partial_x$ and at $2\pi$ by $\nu(2\pi)=-\partial_x$. 
If $j$ is odd, then $\{\partial_x^jh\}(0)=\{\partial_x^jh\}(2\pi)=0$ since $h$ is an even function.
 And clearly we have that
$\{(\partial_x^j)h\}(0)=\{(-\partial_x)^jh\}(2\pi)$ if $j$ is even. Consequently
$$
h^{(j)}(0)=h^{(j)}(2\pi)\quad\text{for any}\quad j\,.
$$
This ensures that the behaviour of $h$ is the same on the boundary components and gives rise to the factor of
$2\operatorname{vol}_{m-1}(N,g_N)$ in Equation (\ref{eqn-7.a}) below. We have:
$$
h^{(2\bar\ell)}(0)=\varepsilon_{\bar\ell}\cdot
2^{-\bar\ell}(2\bar\ell)!+\mathcal{E}_{\bar\ell}^4(\varepsilon_1,...,\varepsilon_{\bar\ell-1})\,.
$$
Since $m\ge2$, there is a non-zero constant $c_m$ with $|c_m|\ge1$ which only depends on $m$ and not on $\bar\ell$ so
that:
$$\rho_{mm}^{(2\bar\ell-2)}(0)=\varepsilon_{\bar\ell}\cdot c_m
2^{-\bar\ell}(2\bar\ell)!+\mathcal{E}_{\bar\ell}^5(\varepsilon_1,...,\varepsilon_{\bar\ell-1},g_N)\,.
$$
We may then apply Theorem~\ref{thm-1.9} to express:
\begin{equation}\label{eqn-7.a}
\begin{array}{l}
\beta_{2\bar\ell}^{\partial M}(1,1,\Delta_{M,g_M},\BB^-)
=\textstyle\varepsilon_\ell\left\{\frac12(2\bar\ell-2)\Xi_{2\bar\ell}
c_m2^{-\bar\ell}(2\bar\ell)!\cdot2\operatorname{vol}_{m-1}(N,g_N)\right\}\\
\qquad\qquad\qquad\qquad\qquad\quad+\mathcal{E}_{2\bar\ell}^6(\varepsilon_1,...,\varepsilon_{\bar\ell-1},g_N)\,.
\vphantom{\vrule height 13pt}\end{array}\end{equation}
Set
$$
\varepsilon_{\bar\ell}:=\left\{\begin{array}{lll}
+1&\text{if}&\mathcal{E}_{2\bar\ell}^6(\varepsilon_1,...,\varepsilon_{\bar\ell-1},g_N)>0\\
-1&\text{if}&\mathcal{E}_{2\bar\ell}^6(\varepsilon_1,...,\varepsilon_{\bar\ell-1},g_N)\le0\vphantom{\vrule height 12pt}
\end{array}\right\}\,.$$
Since there is no cancellation in Equation (\ref{eqn-7.a}), we may estimate:
$$
\left|\beta_{2\bar\ell}^{\partial M}(1,1,\Delta_{M,g_M},\BB^-)\right|\ge\frac12(2\bar\ell-2)\Xi_{2\bar\ell}
c_m(2\bar\ell)!\varepsilon_{\bar\ell}2^{-\bar\ell}\cdot2\operatorname{vol}_{m-1}(N,g_N)\,.
$$
The desired estimate in Assertion (1) of Theorem~\ref{thm-1.4} now follows since:\medbreak\qquad
$\displaystyle\phantom{\ge}\left|\frac12(2\bar\ell-2)\Xi_{2\bar\ell}
c_m(2\bar\ell)!\varepsilon_{\bar\ell}2^{-\bar\ell}\right|
\ge(2\bar\ell-2)\frac{2}{2\bar\ell+1}...\frac2{3}\frac2{\sqrt\pi}2^{-\bar\ell}1\cdot 2\cdot 3...\cdot 2\bar\ell$
\medbreak\qquad
$\displaystyle=\frac{2\bar\ell-2}{2\bar\ell+1}2\cdot 4\cdot...\cdot 2\bar\ell
\ge\frac4{14} 2^{\bar\ell}\bar\ell!\ge\bar\ell!$\quad for\quad $\bar\ell\ge3$.
\medbreak We now turn to the case of the ball and apply a similar analysis to establish Assertion (2) of
Theorem~\ref{thm-1.4}. The functions $\sin(x)$ is now replaced by the function $(x_1^2+...+x_m^2-1)^{2\nu}$,
the operator $\partial_x$ is replaced by the radial derivative $\partial_r$, and the boundary components $x=0$ and
$x=2\pi$ are replaced by the single boundary component $r=1$. The remainder of the argument is the same and is therefore
omitted.
\hfill\qedbox

\section*{Acknowledgments} Research of M. van den Berg partially supported by the London Mathematical Society Grant
41028. Research of P. Gilkey partially supported by Project
MTM2009-07756 (Spain), by DFG
PI 158/4-6 (Germany), and by Project 174012 (Serbia). Research of K. Kirsten supported by National
Science Foundation Grant PHY-0757791. It is a pleasure to
acknowledge useful conversations with Professor Michael Berry
(University of Bristol) concerning these
matters.

\end{document}